\newtheorem{theorem}{Theorem}[section]
\newtheorem{lemma}[theorem]{Lemma}
\newtheorem{corollary}[theorem]{Corollary}
\newtheorem{remark}[theorem]{Remark}
\newtheorem{conjecture}[theorem]{Conjecture}
\begin{document}
\textwidth 150mm \textheight 225mm
\title{On the $A_\alpha$ spectral radius of strongly connected digraphs \thanks{Supported by the National Natural Science Foundation of China (Nos. 11871398 and 12001434).}}
\author{{Weige Xi$^{a,b}$, Ligong Wang$^{b,c}$\footnote{Corresponding author.}}\\
{\small $^{a}$ College of Science, Northwest A\&F University, Yangling, Shaanxi 712100, China.}\\
{\small $^{b}$ School of Mathematics and Statistics, Northwestern Polytechnical University,}\\{\small  Xi'an, Shaanxi 710129, P.R. China.}\\
 {\small $^{c}$ Xi'an-Budapest Joint Research Center for Combinatorics,}
 \\{\small   Northwestern Polytechnical University, Xi'an, Shaanxi 710129, P.R. China.}
 \\{\small E-mail: xiyanxwg@163.com, lgwangmath@163.com}}
\date{}
\maketitle
\begin{center}
\begin{minipage}{135mm}
\vskip 0.3cm
\begin{center}
{\small {\bf Abstract}}
\end{center}
{\small Let $G$ be a digraph with adjacency matrix $A(G)$. Let
$D(G)$ be the diagonal matrix with outdegrees of
vertices of $G$. Nikiforov \cite{Niki} proposed to study the convex combinations
of the adjacency matrix and diagonal matrix of the degrees of undirected graphs.
Liu et al. \cite{LWCL} extended the definition to digraphs. For any real $\alpha\in[0,1]$, the matrix $A_\alpha(G)$ of
a digraph $G$ is defined as
$$A_\alpha(G)=\alpha D(G)+(1-\alpha)A(G).$$
The largest modulus of the eigenvalues
of $A_\alpha(G)$ is called the $A_\alpha$ spectral radius of $G$, denoted by
$\lambda_\alpha(G)$. This paper proves some extremal results about the spectral radius $\lambda_\alpha(G)$
that generalize previous results about $\lambda_0(G)$ and $\lambda_{\frac{1}{2}}(G)$.

In particular, we characterize the extremal
digraph with the maximum (or minimum) $A_\alpha$ spectral radius
among all $\widetilde{\infty}$-digraphs and $\widetilde{\theta}$-digraphs on $n$ vertices.
Furthermore, we determine the digraphs with the second and the third
minimum $A_\alpha$ spectral radius among all strongly connected bicyclic digraphs.
For $0\leq\alpha\leq\frac{1}{2}$, we also
determine the digraphs with the second, the third and the fourth
minimum $A_\alpha$ spectral radius among all strongly connected digraphs on $n$ vertices. Finally, we characterize the digraph with
the minimum $A_\alpha$ spectral radius among all strongly connected bipartite digraphs which contain a complete bipartite subdigraph.
 \vskip 0.1in \noindent {\bf Key Words}: \ Strongly connected digraph, Signless Laplacian, Adjacency matrix, $A_\alpha$ spectral radius. \vskip
0.1in \noindent {\bf AMS Subject Classification (2010)}: \ 05C50,15A18 }
\end{minipage}
\end{center}

\section{Introduction }

Let $G=(V(G),E(G))$
be a digraph with vertex set
$V(G)=\{v_1,v_2,\ldots,v_n\}$ and arc set
$E(G)$. If there is an arc from $v_i$ to $v_j$, we indicate this by writing
$(v_i,v_j)$, call $v_j$ the head of $(v_i,v_j)$, and $v_i$ the tail
of $(v_i,v_j)$, respectively. A digraph $G$ is
called strongly connected if for every pair of vertices $v_i,v_j\in
V(G)$, there exists a directed path from $v_i$ to
$v_j$ and a directed path from $v_j$ to $v_i$. For any vertex $v_i$,
let $N_i^{+}=\{v_j\in V(G) \mid (v_i,v_j)\in
E(G)\}$ denote the out-neighbors of $v_i$. Let $d_i^{+}=|N_i^{+}|$
denote the outdegree of the vertex $v_i$ in the digraph $G$. Let $P_{n}$ and
$C_{n}$ denote the directed path and the
directed cycle on $n$ vertices, respectively. Let $\overset{\longleftrightarrow}{K_{n}}$
denote the complete digraph on $n$ vertices in which for any two distinct vertices
$v_i,v_j\in V(\overset{\longleftrightarrow}{K_{n}})$, there are arcs
$(v_i,v_j)$ and $(v_j,v_i)\in E(\overset{\longleftrightarrow}{K_{n}})$. Suppose
$P_k=v_1v_2\dots v_k$, we call $v_1$ the
initial vertex of the directed path
$P_k$, and $v_k$ the terminal vertex of the
directed path $P_k$.  All digraphs considered in this paper are
simple digraphs, i.e., without loops and multiple arcs.

Let $G=(V(G),E(G))$ be a digraph, if $V(G)=U\cup W$, $U\cap W=\emptyset$
and for any arc $(v_i,v_j)\in E(G)$, $v_i\in U$ and $v_j\in W$ or $v_i\in W$ and $v_j\in U$,
then the digraph $G$ is called a bipartite digraph. Let
$\overleftrightarrow{K_{p,q}}$ be a complete bipartite digraph
obtained from a complete bipartite undirected graph $K_{p,q}$ by replacing each edge with a
pair of oppositely directed arcs.

The $\infty$-digraph \cite{LS} is a digraph on $n$ vertices obtained from two
directed cycles $C_{k+1}$ and
$C_{l+1}$ by identifying a vertex of
$C_{k+1}$ with a vertex of
$C_{l+1}$, denoted by $\infty(k,l)$, $1\leq k\leq
l$ and $k+l+1=n$ (see Figure \ref{fig:c} when $s=2$).
The $\theta$-digraph consists of three
directed paths $P_{a+2}$, $P_{b+2}$, and $P_{c+2}$ such that the
initial vertex of $P_{a+2}$ and $P_{b+2}$ is the terminal vertex of
$P_{c+2}$, and the initial vertex of $P_{c+2}$ is the terminal vertex of
$P_{a+2}$ and $P_{b+2}$, denoted by $\theta(a,b,c)$, where $a\leq b$ and $a+b+c+2=n$ (see Figure \ref{fig:d} when $s=2$ ).

A digraph $G$ is called a strongly connected bicyclic
digraph if $G$ is strongly connected and
$|E(G)|=|V(G)|+1$. Note that each strongly connected bicyclic digraph is either a $\theta$-digraph or a $\infty$-digraph.

For a digraph $G$, let
$A(G)=(a_{ij})_{n\times n}$ be the adjacency matrix
of $G$, where $a_{ij}=1$ whenever $(v_i,v_j)\in
E(G)$, and $a_{ij}=0$ otherwise. Let
$D(G)$ be the diagonal matrix with outdegrees of
vertices of $G$. The sum of $A(G)$ and $D(G)$ is called the signless Laplacian matrix $Q(G)$, which has been
extensively studied since then. More detailed information about this research see \cite{HoYo,LWZ,XiWa1,XiWa2}, and their references.
Nikiforov \cite{Niki} proposed to study the convex linear combinations of the adjacency matrix and diagonal matrix of degrees of undirected graphs,
which give a unified theory of adjacency spectral and signless Laplacian spectral theories. Liu et al. \cite{LWCL} extended the definition to digraphs, they
proposed to study the convex combinations $A_\alpha(G)$ of $A(G)$ and $D(G)$ of the digraph $G$, which is defined as
$$A_\alpha(G)=\alpha D(G)+(1-\alpha)A(G), \ \ 0\leq \alpha\leq1.$$
Obviously,
$$A(G)=A_0(G), \ \ \  D(G)=A_1(G), \ \ \ \textrm{and} \ \ \ Q(G)=2A_{\frac{1}{2}}(G).$$
Since $A_{\frac{1}{2}}(G)$ is essentially equivalent to $Q(G)$, in this paper
we take $A_{\frac{1}{2}}(G)$ as an exact substitute for $Q(G)$. The spectral radius of
$A_\alpha(G)$, i.e., the largest modulus of the eigenvalues
of $A_\alpha(G)$, is called the $A_\alpha$ spectral radius of $G$, denoted by
$\lambda_\alpha(G)$. The $A_\alpha$ spectral radius of undirected graphs has been studied in the literature, see \cite{LiLi,LL,NR,NO,XLLS}.
Recently, Liu et al. \cite{LWCL} determined the unique digraph which attains the maximum (resp. minimum) $A_\alpha$ spectral radius among all strongly connected bicyclic digraphs.
Xi et al. \cite{XiWa4} characterized the digraphs which attain the maximum or minimum $A_\alpha$ spectral radius among all strongly connected digraphs with given girth, clique number, vertex connectivity and arc connectivity, respectively. Ganie and Baghipur \cite{GM} obtained some lower and upper bounds on the $A_\alpha$ spectral radius of digraphs and characterized the extremal digraphs attaining these bounds. We are interested in the $A_\alpha$ spectral radius of some other strongly connected digraphs.

If $\alpha=1$, $A_1(G)=D(G)$ the diagonal matrix with outdegrees of
vertices of the digraph $G$ which is not interesting. So we only consider the cases $0\leq \alpha<1$ in the rest of this paper.
If $G$ is a
strongly connected digraph, then it follows from the Perron Frobenius Theorem \cite{HJ} that $\lambda_\alpha(G)$ is an eigenvalue of $A_\alpha(G)$,
and there is a unique positive unit eigenvector corresponding to $\lambda_\alpha(G)$. The positive unit eigenvector
corresponding to $\lambda_\alpha(G)$ is called the Perron vector of $A_\alpha(G)$.

Spectral graph theory is a fast growing branch of algebraic graph theory. The most studied problems are those of characterization of extremal graphs, such as determine
the maximum or minimum spectral (signless Laplacian spectral) radius over various families of graphs. Recently,
in \cite{LSWY}, Lin et al. determined
the digraphs with the minimum $A_0$ spectral radius among all strongly connected
digraphs with given clique number and girth. In \cite{LD}, Lin and Drury gave the extremal digraphs with the maximum $A_0$ radius among all strongly connected
digraphs with given arc connectivity. In \cite{LS1}, Lin and Shu characterized the digraph which has the maximum $A_0$ spectral radius among all strongly connected
digraphs with given dichromatic number. In \cite{HoYo}, Hong and You determined the digraph which achieves the minimum (or maximum)
$A_{\frac{1}{2}}$ spectral radius among all strongly connected digraphs with some given parameters such as clique number, girth or vertex connectivity.
In \cite{XiWa2}, Xi and Wang determined the extremal digraph with the maximum $A_{\frac{1}{2}}$ spectral
radius among all strongly connected digraphs with given dichromatic number. The main goal of this paper is to extend some results
on maximum or minimum $A_0$ spectral radius and $A_{\frac{1}{2}}$ spectral radius for all $\alpha\in[0,1)$.

The rest of the paper is structured as follows. In Section 2, we will determine the extremal digraphs which achieve
the maximum and minimum $A_\alpha$ spectral radius among all $\widetilde{\infty}$-digraphs and $\widetilde{\theta}$-digraphs
(their definitions can be found in Section 2).
In Section 3, for $0\leq\alpha\leq\frac{1}{2}$, we determine the digraphs which achieve the second, the third and the forth minimum $A_\alpha$ spectral radius
of strongly connected digraphs on $n$ vertices. For general case, we propose a conjecture. In Section 4, we determine the extremal digraph which attains
the minimum $A_\alpha$ spectral radius of strongly
connected bipartite digraphs which contain a complete bipartite subdigraph. The results in our paper generalize some results in \cite{CCL,GL,LZ,LWZ,LWCL}.

\section{The $A_\alpha$ spectral radius of $\widetilde{\infty}$-digraphs and $\widetilde{\theta}$-digraphs}

We have known the $\theta$-digraphs and $\infty$-digraphs. The generalized strongly connected
$\widetilde{\infty}$-digraph is a digraph consisting of $s$ ($s\geq2$) directed cycles with just a vertex in common (as shown in Figure \ref{fig:c}),
denoted by $\widetilde{\infty}(k_1,k_2,\ldots,k_s)$ such that $\sum\limits_{i=1}^sk_i+1=n$. Without loss of
generality, let $1\leq k_i\leq k_{i+1}$ for $i=1,2,\ldots, s-1$. The generalized strongly connected $\widetilde{\theta}$-digraph
consists of $s+1$ ($s\geq2$) directed paths  $P_{k_1+2},\ldots,P_{k_s+2}$ and $P_{l_1+2}$ such that the initial
vertex of $P_{k_1+2},\ldots,P_{k_s+2}$ is the terminal vertex of $P_{l_1+2}$, and the initial
vertex of $P_{l_1+2}$ is the terminal vertex of $P_{k_1+2},\ldots,P_{k_s+2}$ (as shown in Figure \ref{fig:d}), denoted by $\widetilde{\theta}(k_1,k_2,\ldots,k_s,l_1)$
such that $\sum\limits_{i=1}^sk_i+l_1+2=n$.  Without loss of
generality, let $0\leq k_i\leq k_{i+1}$ for $i=1,2,\ldots, s-1$. Note that any $\widetilde{\theta}(k_1,k_2,\ldots,k_s,l_1)$-digraph
 contains $s$ directed cycles.

\begin{figure}[htbp]
\begin{minipage}[t]{0.5\linewidth}
\centering
\includegraphics[scale=0.6]{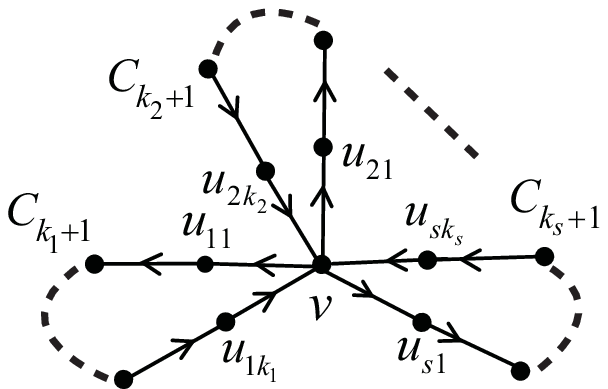}
\caption{ The digraph $\widetilde{\infty}(k_1,k_2,\ldots,k_s)$}\label{fig:c}
\end{minipage}
\begin{minipage}[t]{0.45\linewidth}
\centering
\includegraphics[scale=0.6]{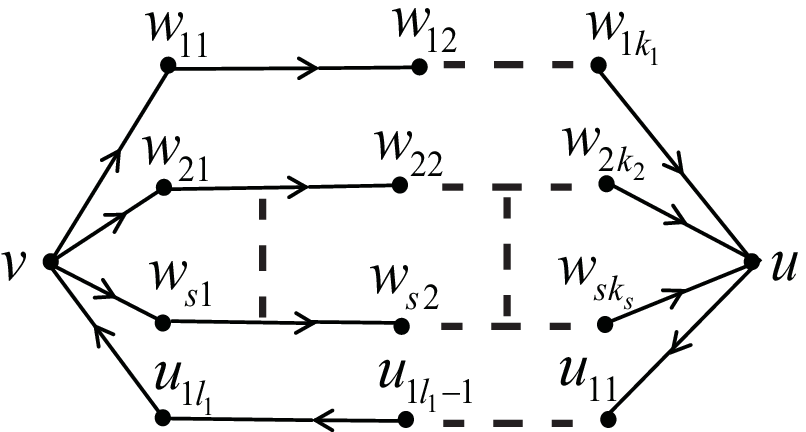}
\caption{ The digraph $\widetilde{\theta}(k_1,k_2,\ldots,k_s,l_1)$}\label{fig:d}
\end{minipage}
\end{figure}

Guo and Liu \cite{GL} characterized the digraph which attains the minimum and maximum $A_0$ spectral radius among all $\widetilde{\theta}$-digraphs and $\widetilde{\infty}$-digraphs on $n$ vertices, respectively. Li et al. \cite{LWZ} determined that
the digraph which attains the minimum and maximum $A_{\frac{1}{2}}$ spectral radius among all $\widetilde{\theta}$-digraphs
and $\widetilde{\infty}$-digraphs on $n$ vertices, respectively. We generalize their results to $0\leq\alpha<1$. Moreover, Li and Zhou \cite{LZ} characterized digraphs which
achieve the second and the third minimum $A_{\frac{1}{2}}$ spectral radius among all strongly connected bipartite digraphs. We also generalize their results to $0\leq\alpha<1$.

\noindent\begin{lemma}\label{le:1} (\cite{HJ}) Let $M$ be an $n\times n$
nonnegative irreducible matrix with spectral radius
$\varrho(M)$ and row sums $s_1,s_2,\ldots,s_n$. Then $$\min_{1 \leq
i \leq n}s_i\leq\varrho(M)\leq\max_{1 \leq i\leq n}s_i.$$ Moreover,
one of the equalities holds if and only if the row sums of $M$ are
all equal.
\end{lemma}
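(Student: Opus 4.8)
The plan is to deduce everything from the Perron--Frobenius theorem, which applies because $M$ is nonnegative and irreducible. First I would record that $\varrho(M)$ is then a simple eigenvalue of $M$ admitting a strictly positive right eigenvector $x=(x_1,\dots,x_n)^{\top}>0$, and, by applying the same theorem to $M^{\top}$, a strictly positive left eigenvector $y=(y_1,\dots,y_n)^{\top}>0$ with $y^{\top}M=\varrho(M)\,y^{\top}$. The crucial bookkeeping observation is that the vector of row sums is exactly $s=(s_1,\dots,s_n)^{\top}=M\mathbf 1$, where $\mathbf 1$ denotes the all-ones vector.

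For the two inequalities the quick elementary route uses the right eigenvector: choosing an index $k$ with $x_k=\max_i x_i$ and reading off the $k$-th coordinate of $Mx=\varrho(M)x$ gives $\varrho(M)x_k=\sum_j m_{kj}x_j\le x_k\sum_j m_{kj}=x_k s_k\le x_k\max_i s_i$, and dividing by $x_k>0$ yields $\varrho(M)\le\max_i s_i$; the lower bound follows symmetrically from the index where $x$ is minimized. However, to obtain the inequalities and the equality characterization simultaneously, I would instead pair the left eigenvector identity with $\mathbf 1$, which gives
$$
\varrho(M)\sum_{i}y_i=y^{\top}M\mathbf 1=y^{\top}s=\sum_i y_i s_i,
$$
so that $\varrho(M)=\big(\sum_i y_i s_i\big)\big/\big(\sum_i y_i\big)$ is a weighted average of the row sums with strictly positive weights $y_i$. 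This single representation reproves $\min_i s_i\le\varrho(M)\le\max_i s_i$ and shows that either bound is attained exactly when every $s_i$ equals that bound, i.e.\ when all row sums coincide. For the converse direction I would note that if all row sums equal a common value $s$, then $M\mathbf 1=s\mathbf 1$ with $\mathbf 1>0$, so by the uniqueness clause of Perron--Frobenius $\mathbf 1$ is the Perron vector and $\varrho(M)=s=\min_i s_i=\max_i s_i$.

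Since this is a classical fact, there is no genuine obstacle; the only point requiring care is the equality case, where the clean idea is to view $\varrho(M)$ as a positive-weight average of the row sums, so that the standard principle ``an average lies strictly inside the range unless all terms are equal'' applies. One should be careful to invoke the strict positivity of the left eigenvector, which is guaranteed by irreducibility rather than by mere nonnegativity, since it is precisely this positivity that forces every $s_i$ to equal the extremum in the equality case.
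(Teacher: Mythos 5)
The paper does not prove this lemma at all; it is quoted from Horn and Johnson \cite{HJ} as a standard fact, so there is no internal proof to compare against. Your argument is a correct and complete proof of the stated result: pairing the strictly positive left Perron eigenvector $y$ with the all-ones vector gives $\varrho(M)=\bigl(\sum_i y_i s_i\bigr)/\bigl(\sum_i y_i\bigr)$, exhibiting $\varrho(M)$ as a positive-weight average of the row sums, which yields both inequalities and, because every weight $y_i$ is strictly positive (here irreducibility is genuinely needed), the equality characterization in both directions.
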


\noindent\begin{lemma}\label{le:14} For any $p, q\in\{1, 2,\ldots,s\}$, if $2\leq k_p\leq k_q$, then we have
\begin{align*}
&\lambda_\alpha(\widetilde{\infty}(k_1,k_2,\ldots,k_{p-1},k_p-1,k_{p+1},\ldots,k_{q-1},k_q+1,k_{q+1},\ldots,k_s))\\
&>\lambda_\alpha(\widetilde{\infty}(k_1,k_2,\ldots,k_{p-1},k_p,k_{p+1},\ldots,k_{q-1},k_q,k_{q+1},\ldots,k_s)).
\end{align*}
\end{lemma}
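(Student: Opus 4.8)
The plan is to reduce the comparison of the two $A_\alpha$ spectral radii to a single-variable monotonicity argument driven by the characteristic equation satisfied by the Perron eigenvalue of a $\widetilde{\infty}$-digraph. First I would set up the eigenvalue equations. Write $G=\widetilde{\infty}(k_1,\ldots,k_s)$, let $v_0$ be the common vertex (whose outdegree is $s$, while every other vertex has outdegree $1$), let $\lambda=\lambda_\alpha(G)$ and let $x$ be the Perron vector. For a non-common vertex $v$ with unique out-neighbour $w$ the equation reads $(\lambda-\alpha)x_v=(1-\alpha)x_w$; setting $t=\tfrac{1-\alpha}{\lambda-\alpha}$ and iterating along the $i$-th cycle $v_0\to w_1^{(i)}\to\cdots\to w_{k_i}^{(i)}\to v_0$ yields $x_{w_1^{(i)}}=t^{k_i}x_{v_0}$. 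Substituting into the equation at $v_0$ and dividing by $x_{v_0}>0$ gives the characteristic relation
$$f_G(\lambda):=\lambda-\alpha s-(1-\alpha)\sum_{i=1}^s\Big(\tfrac{1-\alpha}{\lambda-\alpha}\Big)^{k_i}=0.$$

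Next I would pin down two facts about $f_G$. By Lemma \ref{le:1} the row sums of $A_\alpha(G)$ equal the outdegrees, so $1<\lambda<s$ (strict, since $G$ is not outdegree-regular); in particular $\lambda>1>\alpha$, whence $t_0:=\tfrac{1-\alpha}{\lambda-\alpha}$ satisfies $0<t_0<1$. Differentiating term by term shows $f_G'(\lambda)>1>0$ for all $\lambda>\alpha$, so $f_G$ is strictly increasing on $(\alpha,\infty)$ and $\lambda_\alpha(G)$ is its unique root there. The same applies verbatim to $G'=\widetilde{\infty}(\ldots,k_p-1,\ldots,k_q+1,\ldots)$, which is again a valid $\widetilde{\infty}$-digraph because $k_p\geq 2$ keeps every cycle parameter $\geq 1$ while $s$ and $n$ are unchanged.

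The heart of the argument is then to evaluate $f_{G'}$ at $\lambda_\alpha(G)$. Since $G$ and $G'$ differ only in the $p$-th and $q$-th cycle lengths,
$$f_{G'}(\lambda_\alpha(G))-f_G(\lambda_\alpha(G))=-(1-\alpha)\big(t_0^{k_p-1}+t_0^{k_q+1}-t_0^{k_p}-t_0^{k_q}\big)=-(1-\alpha)(1-t_0)\,t_0^{\,k_p-1}\big(1-t_0^{\,k_q-k_p+1}\big).$$
Because $f_G(\lambda_\alpha(G))=0$, $0<t_0<1$, and $k_q-k_p+1\geq 1$, every factor on the right-hand side is positive, so $f_{G'}(\lambda_\alpha(G))<0$. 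Combining this with the strict monotonicity of $f_{G'}$ and $f_{G'}(\lambda_\alpha(G'))=0$ forces $\lambda_\alpha(G')>\lambda_\alpha(G)$, which is exactly the claimed inequality.

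I expect the main obstacle to be the clean derivation and justification of the characteristic equation, and in particular confirming that the Perron eigenvalue is the unique root of $f_G$ in $(\alpha,\infty)$ and locating $t_0$ in the interval $(0,1)$ via Lemma \ref{le:1}. Once these are in place, the sign analysis factorizes transparently and the uniform monotonicity $f_{G'}'>1$ delivers the conclusion without any delicate estimate.
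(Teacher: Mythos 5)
Your proof is correct and follows essentially the same route as the paper's: both derive the characteristic equation for $\lambda_\alpha$ from the Perron vector propagated along each cycle and then compare the two resulting one-variable functions to locate the roots. Your reciprocal substitution $t=\tfrac{1-\alpha}{\lambda-\alpha}\in(0,1)$ and the explicit monotonicity $f_G'>1$ on $(\alpha,\infty)$ make the final root-comparison step a bit tighter than the paper's ``largest real root'' phrasing, but the underlying argument is the same.
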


\begin{proof} Let $G=\widetilde{\infty}(k_1,k_2,\ldots,k_{p-1},k_p,k_{p+1},\ldots,k_{q-1},k_q,k_{q+1},\ldots,k_s)$ be
a digraph shown in Figure \ref{fig:c}.
Suppose $X=(x_v,x_{1,1},x_{1,2},\ldots,x_{1,k_1},x_{2,1},x_{2,2},\ldots,x_{2,k_2},\ldots, x_{s,1},x_{s,2},\ldots,x_{s,k_s})^T$
 is the Perron vector of $A_\alpha(G)$,
 where $x_v$ corresponds to $v$, $x_{i,j}$ corresponds to
$u_{ij}$ for $i=1,2,\ldots,s\ \mbox{and}\ j=1,2,\ldots,k_i$, respectively.
Since $A_\alpha(G)X=\lambda_\alpha(G)X$, one can
easily see that
$$\begin{cases}
\ \lambda_\alpha(G)x_{1,i_1}=\alpha x_{1,i_1}+(1-\alpha)x_{1,i_1+1},  & \textrm{$i_1=1,2,\ldots,k_1-1 $,}\\
\ \lambda_\alpha(G)x_{2,i_2}=\alpha x_{2,i_2}+(1-\alpha)x_{2,i_2+1},  & \textrm{$i_2=1,2,\ldots,k_2-1 $,}\\
\ \ \ \ \ \ \ \ \ \ \ \ \  \vdots\\
\ \lambda_\alpha(G)x_{s,i_s}=\alpha x_{s,i_s}+(1-\alpha)x_{s,i_s+1},  & \textrm{$i_s=1,2,\ldots,k_s-1 $,}\\
\ \lambda_\alpha(G)x_v=\alpha s x_v+(1-\alpha)(x_{1,1}+x_{2,1}+\ldots+x_{s,1}),\\
\ \lambda_\alpha(G)x_{j,k_j}=\alpha x_{j,k_j}+(1-\alpha)x_v,  & \textrm{$j=1,2,\ldots,s $}.
\end{cases}$$
Then we have
$$x_{j,k_j}=\left(\frac{\lambda_\alpha(G)-\alpha}{1-\alpha}\right)^{k_j-1}x_{j,1}, \ \ \ \ \ \ \ \ j=1,2,\ldots,s.$$
Furthermore,
$$x_{v}=\left(\frac{\lambda_\alpha(G)-\alpha}{1-\alpha}\right)^{k_j}x_{j,1}, \ \ \ \ \ \ \ \ j=1,2,\ldots,s.$$
Thus, we have
$$\left(\frac{\lambda_\alpha(G)-\alpha s}{1-\alpha}\right)x_v=\left(\frac{\lambda_\alpha(G)-\alpha}{1-\alpha}\right)^{-k_1}x_v+
\left(\frac{\lambda_\alpha(G)-\alpha}{1-\alpha}\right)^{-k_2}x_v+\ldots+\left(\frac{\lambda_\alpha(G)-\alpha}{1-\alpha}\right)^{-k_s}x_v.$$
By the Perron-Frobenius Theorem, we have $x_v>0$, therefore
$$\left(\frac{\lambda_\alpha(G)-s\alpha}{1-\alpha}\right)\left(\frac{\lambda_\alpha(G)-\alpha}{1-\alpha}\right)^{n-1}
=\sum\limits_{i=1}^s\left(\frac{\lambda_\alpha(G)-\alpha}{1-\alpha}\right)^{n-1-k_i}.$$
Let $G'=\widetilde{\theta}(k_1,k_2,\ldots,k_{p-1},k_p-1,k_{p+1},\ldots,k_{q-1},k_q+1,k_{q+1},\ldots,k_s)$. Similarly, we have
\begin{align*}
\left(\frac{\lambda_\alpha(G')-s\alpha}{1-\alpha}\right)\left(\frac{\lambda_\alpha(G')-\alpha}{1-\alpha}\right)^{n-1}&=\sum\limits_{\scriptstyle i=1, \scriptstyle
i\neq p\atop \scriptstyle i\neq q }^s\left(\frac{\lambda_\alpha(G')-\alpha}{1-\alpha}\right)^{n-1-k_i}\\
&+\left(\frac{\lambda_\alpha(G')-\alpha}{1-\alpha}\right)^{n-k_p}+\left(\frac{\lambda_\alpha(G')-\alpha}{1-\alpha}\right)^{n-2-k_q}.
\end{align*}
Let $f(x)=\left(\frac{x-s\alpha}{1-\alpha}\right)\left(\frac{x-\alpha}{1-\alpha}\right)^{n-1}-\sum\limits_{i=1}^s\left(\frac{x-\alpha}{1-\alpha}\right)^{n-1-k_i}$,

$g(x)=\left(\frac{x-s\alpha}{1-\alpha}\right)\left(\frac{x-\alpha}{1-\alpha}\right)^{n-1}-\sum\limits_{\scriptstyle i=1, \scriptstyle
i\neq p\atop \scriptstyle i\neq q }^s\left(\frac{x-\alpha}{1-\alpha}\right)^{n-1-k_i}
-\left(\frac{x-\alpha}{1-\alpha}\right)^{n-k_p}-\left(\frac{x-\alpha}{1-\alpha}\right)^{n-2-k_q}$.

It is easy to see that $\lambda_\alpha(G)$ is the largest
real root of $f(x)=0$. Similarly $\lambda_\alpha(G')$
is the largest real root of $g(x)=0$. Since for all $x>1$
$$
f(x)-g(x)=\left(\left(\frac{x-\alpha}{1-\alpha}\right)-1\right)
\left(\left(\frac{x-\alpha}{1-\alpha}\right)^{n-1-k_p}-\left(\frac{x-\alpha}{1-\alpha}\right)^{n-2-k_q}\right)>0.
$$
Since the
minimum row sum of  $A_\alpha(G')$ is 1, and the row sums of
$A_\alpha(G')$ are not all equal, by Lemma \ref{le:1}, then we
have $\lambda_\alpha(G')>1$. Hence, we get
\begin{align*}
&\lambda_\alpha(\widetilde{\infty}(k_1,k_2,\ldots,k_{p-1},k_p-1,k_{p+1},\ldots,k_{q-1},k_q+1,k_{q+1},\ldots,k_s))\\
&>\lambda_\alpha(\widetilde{\infty}(k_1,k_2,\ldots,k_{p-1},k_p,k_{p+1},\ldots,k_{q-1},k_q,k_{q+1},\ldots,k_s)),
\end{align*}
which prove the result.
\end{proof}
By Lemma \ref{le:14}, we immediately obtain the following theorem.

\noindent\begin{theorem}\label{th:4-2}  Among all
$\widetilde{\infty}$-digraphs on $n$ vertices, the digraph
$\widetilde{\infty}(1,1,1,\ldots,n-s)$ is the unique digraph which attains the maximum $A_\alpha$ spectral radius,
the digraph
$\widetilde{\infty}(a_1,a_2,\ldots,a_s)$ such that
$a_i=\lfloor \frac{n-1}{s} \rfloor$ and $a_j=\lceil \frac{n-1}{s} \rceil$
for any $i \in \{1,2,\ldots,s-(n-1-s\lfloor \frac{n-1}{s} \rfloor)\}$ and
$j \in \{s-(n-1-s\lfloor \frac{n-1}{s} \rfloor)+1,\ldots,s\}$, is the unique digraph which attains the minimum $A_\alpha$ spectral radius.
\end{theorem}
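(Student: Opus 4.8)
The plan is to deduce the theorem from Lemma \ref{le:14} by a repeated exchange (shifting) argument, handling the two extremal problems separately; note that two $\widetilde{\infty}$-digraphs with the same multiset of cycle lengths are isomorphic, so only the multiset $\{k_1,\ldots,k_s\}$ matters.

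For the \textbf{maximum}, I would show that every $\widetilde{\infty}(k_1,\ldots,k_s)$ other than $\widetilde{\infty}(1,\ldots,1,n-s)$ has a strictly larger neighbour. Since $\sum_{i=1}^s k_i = n-1$ with each $k_i\geq 1$, the digraph $\widetilde{\infty}(1,\ldots,1,n-s)$ is the only configuration in which at most one cycle has length $\geq 2$. Hence if $G$ is not this digraph, at least two cycle lengths are $\geq 2$; picking indices $p,q$ with $2\leq k_p\leq k_q$ among them and applying Lemma \ref{le:14} yields a digraph with strictly larger $\lambda_\alpha$. Therefore the maximizer must be $\widetilde{\infty}(1,\ldots,1,n-s)$, and it is unique because it is the only configuration to which the lemma cannot be applied to increase the radius (indeed the shifting move requires two distinct cycles of length $\geq 2$).

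For the \textbf{minimum}, I would run the lemma in reverse. Setting $k_p=k_a+1$ and $k_q=k_b-1$ in Lemma \ref{le:14}, the inequality reads: whenever two cycle lengths satisfy $k_a\geq 1$ and $k_b\geq k_a+2$, replacing the pair $(k_a,k_b)$ by the more balanced pair $(k_a+1,k_b-1)$ strictly decreases $\lambda_\alpha$. Thus any digraph whose cycle lengths are not pairwise within $1$ of each other has a strictly smaller neighbour, so the minimizer must have all cycle lengths in $\{\lfloor (n-1)/s\rfloor,\lceil (n-1)/s\rceil\}$. Writing $q=\lfloor (n-1)/s\rfloor$ and $r=(n-1)-sq$, the constraint $\sum_{i=1}^s k_i=n-1$ forces exactly $r$ cycles of length $q+1$ and $s-r$ cycles of length $q$; this multiset is unique, and it is precisely the digraph in the statement, giving the claimed unique minimizer.

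To make both arguments rigorous and to guarantee that the exchange processes terminate at the asserted configurations, I would attach the potential $\Phi(G)=\sum_{i=1}^s k_i^2$: the spreading move $(k_p,k_q)\mapsto(k_p-1,k_q+1)$ strictly increases $\Phi$, while the balancing move $(k_a,k_b)\mapsto(k_a+1,k_b-1)$ strictly decreases it. As $\Phi$ is a bounded, integer-valued quantity, each chain of moves terminates, necessarily at $\widetilde{\infty}(1,\ldots,1,n-s)$ for the spreading process and at the balanced digraph for the balancing process. The only point needing care is the combinatorial bookkeeping for the minimum: verifying that the balanced multiset is unique and that the indexing $a_i=\lfloor (n-1)/s\rfloor$, $a_j=\lceil (n-1)/s\rceil$ records exactly $s-r$ copies of $q$ and $r$ copies of $q+1$. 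This is a routine count once $r=n-1-s\lfloor (n-1)/s\rfloor$ is recognized as the number of ``long'' cycles, and it is the main (though minor) obstacle.
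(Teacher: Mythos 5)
Your proposal is correct and follows essentially the same route as the paper: the paper derives Theorem \ref{th:4-2} directly from the exchange inequality of Lemma \ref{le:14} (stating only that the theorem is ``immediately obtained''), and your write-up simply makes explicit the repeated spreading/balancing moves, the termination via the potential $\sum_i k_i^2$, and the identification of the two terminal configurations. No gaps.
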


\noindent\begin{lemma}\label{le:12} For any $p, q\in\{1, 2,\ldots,s\}$, if $1\leq k_p\leq k_q$, then we have
\begin{align*}
&\lambda_\alpha(\widetilde{\theta}(k_1,k_2,\ldots,k_{p-1},k_p-1,k_{p+1},\ldots,k_{q-1},k_q+1,k_{q+1},\ldots,k_s,l_1))\\
&>\lambda_\alpha(\widetilde{\theta}(k_1,k_2,\ldots,k_{p-1},k_p,k_{p+1},\ldots,k_{q-1},k_q,k_{q+1},\ldots,k_s,l_1)).
\end{align*}
\end{lemma}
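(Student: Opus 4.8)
The plan is to mirror the argument of Lemma \ref{le:14}, adapting the Perron-eigenvector computation to the \emph{two} hub vertices that distinguish a $\widetilde{\theta}$-digraph from a $\widetilde{\infty}$-digraph. Write $G=\widetilde{\theta}(k_1,\dots,k_p,\dots,k_q,\dots,k_s,l_1)$ and let $u$ be the vertex of outdegree $s$ (the common initial vertex of the $s$ forward paths $P_{k_i+2}$) and $w$ the vertex of outdegree $1$ (the common terminal vertex of those paths and initial vertex of the return path $P_{l_1+2}$). Let $X$ be the Perron vector, with coordinates $x_u,x_w$, coordinates $x_{i,j}$ $(1\le j\le k_i)$ along the $i$-th forward path, and coordinates $y_j$ $(1\le j\le l_1)$ along the return path. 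First I would write out the scalar equations of $A_\alpha(G)X=\lambda_\alpha(G)X$: each internal vertex of outdegree $1$ yields a two-term recurrence, $u$ yields $\lambda_\alpha(G)x_u=\alpha s\,x_u+(1-\alpha)\sum_{i=1}^s x_{i,1}$, and $w$ yields $\lambda_\alpha(G)x_w=\alpha x_w+(1-\alpha)y_1$.

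Next I would solve the recurrences by marching along each forward path toward $w$ and along the return path toward $u$. Writing $t=\frac{\lambda_\alpha(G)-\alpha}{1-\alpha}$ for brevity, the forward paths give $x_{i,1}=t^{-k_i}x_w$, and the return path together with the equation at $w$ gives $x_u=t^{\,l_1+1}x_w$, i.e.\ $x_w=t^{-(l_1+1)}x_u$. Substituting into the equation at $u$ and dividing by $x_u>0$ (positive by Perron--Frobenius) yields the characteristic relation $\frac{\lambda_\alpha(G)-s\alpha}{1-\alpha}=t^{-(l_1+1)}\sum_{i=1}^s t^{-k_i}$; clearing denominators by $t^{\,n-1}$ produces a polynomial $f(x)=0$ whose largest real root is $\lambda_\alpha(G)$. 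The identical computation for $G'$ (with $k_p$ replaced by $k_p-1$ and $k_q$ by $k_q+1$) produces $g(x)=0$ with largest root $\lambda_\alpha(G')$; note that $k_p-1\ge 0$ since $k_p\ge 1$, so $G'$ is still a valid strongly connected $\widetilde{\theta}$-digraph.

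The decisive step is to compare $f$ and $g$. Since the factor $t^{-(l_1+1)}$ multiplies every summand of both characteristic relations, it is common to $f$ and $g$, and I expect $f(x)-g(x)$ to collapse to $(t-1)\bigl(t^{\,n-2-l_1-k_p}-t^{\,n-3-l_1-k_q}\bigr)$, which is strictly positive for $x>1$: then $t>1$, while $(n-2-l_1-k_p)-(n-3-l_1-k_q)=1+k_q-k_p\ge 1$ makes the first power dominate. Next, the minimum row sum of $A_\alpha(G')$ equals $1$ (attained at every outdegree-$1$ vertex) while $u$ has row sum $s\ge 2$, so the row sums are not all equal, and Lemma \ref{le:1} gives $\lambda_\alpha(G')>1$. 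Hence $f(\lambda_\alpha(G'))=g(\lambda_\alpha(G'))+\bigl(f-g\bigr)(\lambda_\alpha(G'))>0$; since $\lambda_\alpha(G)$ is the largest real root of $f$ and the reduced characteristic function $\frac{x-s\alpha}{1-\alpha}-t^{-(l_1+1)}\sum_i t^{-k_i}$ is strictly increasing in $x>\alpha$ (left side increasing, each $t^{-m}$ decreasing), a positive value at $\lambda_\alpha(G')$ forces $\lambda_\alpha(G')>\lambda_\alpha(G)$, which is the claim.

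The main obstacle I anticipate is purely bookkeeping: correctly threading the return path $P_{l_1+2}$ and the second hub $w$ through the recurrences, which shifts every exponent by $l_1+1$ relative to the $\widetilde{\infty}$ computation of Lemma \ref{le:14}. Because that shift is shared by $G$ and $G'$, it cancels in $f-g$ and leaves exactly the same sign-determining factor $(t-1)\bigl(t^{\,n-2-l_1-k_p}-t^{\,n-3-l_1-k_q}\bigr)$ as in the $\widetilde{\infty}$ case, so no genuinely new analytic difficulty arises beyond verifying this factorization and the monotonicity used in the final comparison.
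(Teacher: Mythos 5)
Your proposal is correct and follows essentially the same route as the paper: the same Perron-vector recurrences produce the same characteristic equations, the same factorization $f(x)-g(x)=\bigl(t-1\bigr)\bigl(t^{\,n-2-l_1-k_p}-t^{\,n-3-l_1-k_q}\bigr)$, and the same appeal to Lemma \ref{le:1} to place $\lambda_\alpha(G')>1$. The only (welcome) addition is that you spell out the monotonicity of the reduced characteristic function in the final comparison, a step the paper leaves implicit.
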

\begin{proof} Let $G=\widetilde{\theta}(k_1,k_2,\ldots,k_{p-1},k_p,k_{p+1},\ldots,k_{q-1},k_q,k_{q+1},\ldots,k_s,l_1)$ be a digraph shown in Figure \ref{fig:d}.
Similar to the proof Lemma \ref{le:14}, we can know that $\lambda_\alpha(G)$ satisfies the follow equation
$$\left(\frac{\lambda_\alpha(G)-s\alpha}{1-\alpha}\right)\left(\frac{\lambda_\alpha(G)-\alpha}{1-\alpha}\right)^{n-1}
=\sum\limits_{i=1}^s\left(\frac{\lambda_\alpha(G)-\alpha}{1-\alpha}\right)^{n-2-l_1-k_i}.$$
Let $G'=\widetilde{\theta}(k_1,k_2,\ldots,k_{p-1},k_p-1,k_{p+1},\ldots,k_{q-1},k_q+1,k_{q+1},\ldots,k_s,l_1)$. Similarly, we have
\begin{align*}
\left(\frac{\lambda_\alpha(G')-s\alpha}{1-\alpha}\right)\left(\frac{\lambda_\alpha(G')-\alpha}{1-\alpha}\right)^{n-1}&=\sum\limits_{\scriptstyle i=1, \scriptstyle
i\neq p\atop \scriptstyle i\neq q }^s\left(\frac{\lambda_\alpha(G')-\alpha}{1-\alpha}\right)^{n-2-l_1-k_i}\\
&+\left(\frac{\lambda_\alpha(G')-\alpha}{1-\alpha}\right)^{n-1-l_1-k_p}+\left(\frac{\lambda_\alpha(G')-\alpha}{1-\alpha}\right)^{n-3-l_1-k_q}.
\end{align*}
Let $f(x)=\left(\frac{x-s\alpha}{1-\alpha}\right)\left(\frac{x-\alpha}{1-\alpha}\right)^{n-1}-\sum\limits_{i=1}^s\left(\frac{x-\alpha}{1-\alpha}\right)^{n-2-l_1-k_i}$,

$g(x)=\left(\frac{x-s\alpha}{1-\alpha}\right)\left(\frac{x-\alpha}{1-\alpha}\right)^{n-1}-\sum\limits_{\scriptstyle i=1, \scriptstyle
i\neq p\atop \scriptstyle i\neq q }^s\left(\frac{x-\alpha}{1-\alpha}\right)^{n-2-l_1-k_i}
-\left(\frac{x-\alpha}{1-\alpha}\right)^{n-1-l_1-k_p}-\left(\frac{x-\alpha}{1-\alpha}\right)^{n-3-l_1-k_q}$.

It is easy to see that $\lambda_\alpha(G)$ is the largest
real root of $f(x)=0$. Similarly $\lambda_\alpha(G')$
is the largest real root of $g(x)=0$. Since for all $x>1$
\begin{align*}
f(x)-g(x)&=\left(\frac{x-\alpha}{1-\alpha}\right)^{n-1-l_1-k_p}-\left(\frac{x-\alpha}{1-\alpha}\right)^{n-2-l_1-k_p}\\
&+\left(\frac{x-\alpha}{1-\alpha}\right)^{n-3-l_1-k_q}-\left(\frac{x-\alpha}{1-\alpha}\right)^{n-2-l_1-k_q}\\
&=\left(\left(\frac{x-\alpha}{1-\alpha}\right)-1\right)\left(\left(\frac{x-\alpha}{1-\alpha}\right)^{n-2-l_1-k_p}-\left(\frac{x-\alpha}{1-\alpha}\right)^{n-3-l_1-k_q}\right)>0.
\end{align*}
Since the
minimum row sum of  $A_\alpha(G')$ is 1, and the row sums of
$A_\alpha(G')$ are not all equal, by Lemma \ref{le:1}, then we
have $\lambda_\alpha(G')>1$. Hence, we get
\begin{align*}
&\lambda_\alpha(\widetilde{\theta}(k_1,k_2,\ldots,k_{p-1},k_p-1,k_{p+1},\ldots,k_{q-1},k_q+1,k_{q+1},\ldots,k_s,l_1))\\
&>\lambda_\alpha(\widetilde{\theta}(k_1,k_2,\ldots,k_{p-1},k_p,k_{p+1},\ldots,k_{q-1},k_q,k_{q+1},\ldots,k_s,l_1)),
\end{align*}
which prove the result.
\end{proof}
Similarly, we have the following lemma.

\noindent\begin{lemma}\label{le:13} If $l_1\geq 1$, then for any $p\in\{1, 2,\ldots,s\}$, then we have
\begin{align*}
&\lambda_\alpha(\widetilde{\theta}(k_1,k_2,\ldots,k_{p-1},k_p+1,k_{p+1},\ldots,k_s,l_1-1))\\
&>\lambda_\alpha(\widetilde{\theta}(k_1,k_2,\ldots,k_{p-1},k_p,k_{p+1},\ldots,k_s,l_1)).
\end{align*}
\end{lemma}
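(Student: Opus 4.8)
The plan is to follow the same template as the proof of Lemma \ref{le:12}: realize $\lambda_\alpha$ as the largest real root of a characteristic equation coming from the Perron eigenvector, and then compare the two equations. First I would set $G=\widetilde{\theta}(k_1,\ldots,k_p,\ldots,k_s,l_1)$ and $G'=\widetilde{\theta}(k_1,\ldots,k_p+1,\ldots,k_s,l_1-1)$, noting that both have the same number of vertices $n$ and the same number of cycles $s$, and that $l_1\geq 1$ guarantees $G'$ is a genuine $\widetilde{\theta}$-digraph. From the computation already carried out in Lemma \ref{le:12}, $\lambda_\alpha(G)$ is the largest real root of
$$f(x)=\left(\frac{x-s\alpha}{1-\alpha}\right)\left(\frac{x-\alpha}{1-\alpha}\right)^{n-1}-\sum_{i=1}^s\left(\frac{x-\alpha}{1-\alpha}\right)^{n-2-l_1-k_i},$$
and $\lambda_\alpha(G')$ is the largest real root of the analogous polynomial $g(x)$ obtained by substituting $l_1\mapsto l_1-1$ and $k_p\mapsto k_p+1$ into the exponents.

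The delicate point, and the step I expect to be the main obstacle, is the exponent bookkeeping when forming $g$. Since $l_1$ occurs in every exponent $n-2-l_1-k_i$, decreasing $l_1$ by one raises all $s$ exponents by one; but in the $p$-th term the simultaneous increase of $k_p$ by one exactly cancels this, so the $p$-th term is unchanged while each of the remaining $s-1$ terms has its exponent increased by one. Concretely,
$$g(x)=\left(\frac{x-s\alpha}{1-\alpha}\right)\left(\frac{x-\alpha}{1-\alpha}\right)^{n-1}-\sum_{\substack{i=1\\ i\neq p}}^s\left(\frac{x-\alpha}{1-\alpha}\right)^{n-1-l_1-k_i}-\left(\frac{x-\alpha}{1-\alpha}\right)^{n-2-l_1-k_p}.$$
Getting this cancellation right is what the whole argument hinges on; once it is in place the remaining manipulations are routine and mirror Lemma \ref{le:12} exactly.

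Subtracting, the two shared leading terms and the unchanged $p$-th term drop out, leaving
$$f(x)-g(x)=\sum_{\substack{i=1\\ i\neq p}}^s\left(\left(\frac{x-\alpha}{1-\alpha}\right)^{n-1-l_1-k_i}-\left(\frac{x-\alpha}{1-\alpha}\right)^{n-2-l_1-k_i}\right)=\left(\frac{x-\alpha}{1-\alpha}-1\right)\sum_{\substack{i=1\\ i\neq p}}^s\left(\frac{x-\alpha}{1-\alpha}\right)^{n-2-l_1-k_i}.$$
For $x>1$ we have $\frac{x-\alpha}{1-\alpha}>1$, and since $s\geq 2$ the sum is nonempty and positive, so $f(x)-g(x)>0$ for all $x>1$. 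Finally I would invoke Lemma \ref{le:1}: the minimum row sum of $A_\alpha(G)$ equals $1$ and its row sums are not all equal (the hub vertex has row sum $s\geq 2$), so $\lambda_\alpha(G)>1$. Evaluating the inequality at $x=\lambda_\alpha(G)$ gives $g(\lambda_\alpha(G))<f(\lambda_\alpha(G))=0$; since $g$ has positive leading coefficient, hence $g(x)\to+\infty$, and its largest real root is $\lambda_\alpha(G')$, the intermediate value theorem forces a root of $g$ beyond $\lambda_\alpha(G)$, whence $\lambda_\alpha(G')>\lambda_\alpha(G)$, which is exactly the claimed inequality.
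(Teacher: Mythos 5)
Your proposal is correct and follows exactly the route the paper intends: the paper proves Lemma \ref{le:13} only by the remark ``Similarly'' after Lemma \ref{le:12}, and your argument is precisely that template, with the exponent bookkeeping ($n-2-(l_1-1)-k_i=n-1-l_1-k_i$ for $i\neq p$, while the $p$-th exponent is unchanged) carried out correctly and the final root-comparison step valid. No gaps.
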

By Lemmas \ref{le:12} and \ref{le:13}, we immediately obtain the following theorem.

\noindent\begin{theorem}\label{th:4-1}  Among all
$\widetilde{\theta}$-digraphs on $n$ vertices, the digraph
$\widetilde{\theta}(0,1,1,\ldots,n-s,0)$ is the unique digraph which attains the maximum $A_\alpha$ spectral radius, the digraph
$\widetilde{\theta}(0,1,1,\ldots,1,n-s-1)$ is the unique digraph which attains the minimum $A_\alpha$ spectral radius.
\end{theorem}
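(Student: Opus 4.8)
The plan is to derive both extremal claims directly from Lemmas \ref{le:12} and \ref{le:13} by starting from a hypothetical extremal $\widetilde{\theta}$-digraph and showing that, unless its parameters already have the asserted shape, one of the two lemmas produces another $\widetilde{\theta}$-digraph on $n$ vertices with strictly larger (for the maximum) or strictly smaller (for the minimum) $A_\alpha$ spectral radius. The one structural fact I would isolate first is that at most one of the $k_i$ may equal $0$: a value $k_i=0$ contributes the path $P_{k_i+2}=P_2$, i.e. a single arc from the common initial vertex to the common terminal vertex, and two such arcs would be parallel, contradicting simplicity of $G$. I would also note that every rearrangement below preserves $\sum_i k_i+l_1=n-2$, so the output is again a $\widetilde{\theta}$-digraph on $n$ vertices once we reindex the $k_i$ to be nondecreasing (a relabelling of the cycles, hence an isomorphism that leaves $\lambda_\alpha$ unchanged).

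For the maximum, suppose $G=\widetilde{\theta}(k_1,\ldots,k_s,l_1)$ attains the maximum $A_\alpha$ spectral radius. First I would force $l_1=0$: if $l_1\geq1$, then shifting one unit from $l_1$ into $k_s$ yields, by Lemma \ref{le:13}, a $\widetilde{\theta}$-digraph with strictly larger $\lambda_\alpha$, a contradiction. With $l_1=0$ and $\sum_i k_i=n-2$ fixed, I would then spread the $k_i$ as far as simplicity permits. If no $k_i$ were $0$, decrementing the smallest and incrementing the largest is legal (it creates at most one zero) and strictly raises $\lambda_\alpha$ by Lemma \ref{le:12}; hence exactly one $k_i=0$. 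If, among the positive entries, two were $\geq2$, the same spreading move (decrement the smaller of the two, increment a larger entry) is again legal and strictly raises $\lambda_\alpha$. Therefore at the maximizer all but one positive $k_i$ equal $1$, forcing the multiset $\{0,1,\ldots,1,n-s\}$ and so $G=\widetilde{\theta}(0,1,\ldots,1,n-s,0)$.

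For the minimum I would run the two lemmas in the opposite direction. By Lemma \ref{le:13}, moving a unit from any positive $k_p$ into $l_1$ strictly lowers $\lambda_\alpha$, so at the minimizer the $k_i$ are drained as far as the one-zero constraint allows; by Lemma \ref{le:12}, equalizing any pair with $k_q\geq k_p+2$ strictly lowers $\lambda_\alpha$, so the surviving entries are as equal as possible. Combining these, the minimizer must have exactly one $k_i=0$ and every other $k_i=1$ (any further draining would create a forbidden second zero), whence $\sum_i k_i=s-1$, $l_1=n-s-1$, and $G=\widetilde{\theta}(0,1,\ldots,1,n-s-1)$. The strictness of the inequalities in both lemmas delivers uniqueness in both cases.

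I expect the main obstacle to be bookkeeping rather than analysis: at each rearrangement one must verify that the new parameter vector still describes a legal simple $\widetilde{\theta}$-digraph, i.e. that no second zero $k_i$ is created and that $l_1$ stays nonnegative. It is precisely this at-most-one-zero condition that halts the spreading process at $\{0,1,\ldots,1,n-s\}$ and the draining process at $\{0,1,\ldots,1\}$; without it the lemmas would push the parameters to degenerate vectors. Since all the analytic content already sits inside Lemmas \ref{le:12} and \ref{le:13}, once this feasibility check is dispatched the theorem follows immediately, exactly as the authors assert.
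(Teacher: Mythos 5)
Your proposal is correct and follows exactly the route the paper intends: the paper's entire proof of Theorem \ref{th:4-1} is the single sentence that it follows ``immediately'' from Lemmas \ref{le:12} and \ref{le:13}, and your argument is a careful elaboration of that, including the one feasibility point the paper leaves implicit (at most one $k_i$ can equal $0$ in a simple digraph), which is precisely what halts the spreading at $\{0,1,\ldots,1,n-s\}$ and the draining at $\{0,1,\ldots,1\}$.
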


\noindent\begin{lemma}\label{le:4} (\cite{XiWa4}) Let $0\leq \alpha<1$ and $G=(V(G),E(G))$
be a strongly connected digraph on $n$ vertices, $v_p, v_q$ be two distinct vertices of
$V(G)$. Suppose that $v_1,v_2,\ldots,v_t\in N^-_{v_p}\setminus\{N^-_{v_q}\cup\{v_q\}\}$, where $1\leq t\leq d_p^-$,
 and
$X=(x_1,x_2,\ldots,x_n)^T$ be the unique positive unit eigenvector
corresponding to the $A_\alpha$ spectral radius
$\lambda_\alpha(G)$, where $x_i$ corresponds to the vertex $v_i$.
Let $H=G-\{(v_i,v_p): i=1,2\ldots,t\}+\{(v_i,v_q): i=1,2\ldots,t\}$. If $x_q\geq x_p$, then $\lambda_\alpha(H)\geq \lambda_\alpha(G)$.
Furthermore, if $H$ is strongly connected and $x_q>x_p$, then $\lambda_\alpha(H)>
\lambda_\alpha(G)$.
\end{lemma}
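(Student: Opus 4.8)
The plan is to compare $A_\alpha(G)$ and $A_\alpha(H)$ through the Perron vector $X$ of $A_\alpha(G)$, and to deduce the eigenvalue comparison from the weighted row-sum bound of Lemma~\ref{le:1} applied after a diagonal similarity, rather than from any Rayleigh-quotient argument (which is unavailable since $A_\alpha$ is nonsymmetric for digraphs).

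First I would record the decisive structural observation. For each $i\in\{1,\dots,t\}$ the operation $G\to H$ deletes the arc $(v_i,v_p)$ and inserts the arc $(v_i,v_q)$, so every tail $v_i$ loses exactly one out-arc and gains exactly one, whence $d_{v_i}^{+}$ is unchanged; as no other arcs are touched, $D(H)=D(G)$. The hypothesis $v_i\in N^-_{v_p}\setminus(N^-_{v_q}\cup\{v_q\})$ guarantees $(v_i,v_p)\in E(G)$ and $(v_i,v_q)\notin E(G)$, so $H$ is again a simple digraph and, in particular, $v_i\neq v_p,v_q$; thus the rows indexed by $v_p$ and $v_q$ are untouched.

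Next I would compute $A_\alpha(H)X$ row by row using the eigen-equation $\lambda_\alpha(G)x_j=\alpha d_j^{+}x_j+(1-\alpha)\sum_{v_m\in N_j^{+}}x_m$. For every vertex $v_j\notin\{v_1,\dots,v_t\}$ the out-neighbourhood and out-degree are unchanged, so $(A_\alpha(H)X)_j=\lambda_\alpha(G)x_j$. For $v_j=v_i$ only the summand $x_p$ is replaced by $x_q$, giving
\[
(A_\alpha(H)X)_{v_i}=\lambda_\alpha(G)x_{v_i}+(1-\alpha)(x_q-x_p).
\]
Hence $A_\alpha(H)X\geq\lambda_\alpha(G)X$ entrywise whenever $x_q\geq x_p$. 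Conjugating $A_\alpha(H)$ by $\mathrm{diag}(x_1,\dots,x_n)$ produces a nonnegative matrix with the same spectrum whose $j$-th row sum equals $(A_\alpha(H)X)_j/x_j\geq\lambda_\alpha(G)$; the lower Collatz–Wielandt bound (the left inequality of Lemma~\ref{le:1}, valid for any nonnegative matrix) then yields $\lambda_\alpha(H)=\varrho(A_\alpha(H))\geq\lambda_\alpha(G)$.

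For the strict statement I would invoke that, when $H$ is strongly connected and $\alpha<1$, the off-diagonal sign pattern of $A_\alpha(H)$ coincides with that of $A(H)$, so $A_\alpha(H)$ is irreducible and the equality clause of Lemma~\ref{le:1} is available. If $x_q>x_p$, the transformed row sums equal $\lambda_\alpha(G)$ on the nonempty set of untouched rows (containing $v_p,v_q$) but strictly exceed it on $v_1,\dots,v_t$, so they are not all equal; hence the left inequality of Lemma~\ref{le:1} is strict and $\lambda_\alpha(H)>\lambda_\alpha(G)$. I expect the strict case to be the only delicate point: because $A_\alpha$ is nonsymmetric one cannot perturb a quadratic form, and one must instead use irreducibility — equivalently, pairing $A_\alpha(H)X\geq\lambda_\alpha(G)X$ with the positive left Perron vector $Y$ of $A_\alpha(H)$ gives $Y^{\!\top}A_\alpha(H)X>\lambda_\alpha(G)\,Y^{\!\top}X$, which forces $\varrho(A_\alpha(H))>\lambda_\alpha(G)$.
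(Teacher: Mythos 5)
The paper does not prove this lemma; it imports it from \cite{XiWa4} without proof, so there is nothing internal to compare against. Your argument is correct and self-contained: the observation that $D(H)=D(G)$ and that only rows $v_1,\dots,v_t$ of $A_\alpha(H)$ change, giving $A_\alpha(H)X\geq\lambda_\alpha(G)X$ entrywise, followed by the diagonal similarity and the Collatz--Wielandt/row-sum bound of Lemma~\ref{le:1} (with the equality clause, via irreducibility of $A_\alpha(H)$ for strongly connected $H$ and $\alpha<1$, handling the strict case), is exactly the standard proof of this kind of arc-switching lemma and is what one finds in \cite{XiWa4}. The only point worth flagging is one you already addressed: in the non-strict case $H$ need not be strongly connected, so you correctly fall back on the fact that $\varrho(M)\geq\min_i s_i$ holds for arbitrary nonnegative $M$, not just irreducible ones.
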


\noindent\begin{lemma}\label{le:15} \ For any $\widetilde{\theta}(k_1,k_2,\ldots,k_s,l_1)$-digraph,
there exists $\widetilde{\infty}(k_2,k_3,\ldots,k_s,k_1+l_1+1)$ such that
$$\lambda_\alpha(\widetilde{\theta}(k_1,k_2,\ldots,k_s,l_1))<\lambda_\alpha(\widetilde{\infty}(k_2,k_3,\ldots,k_s,k_1+l_1+1)).$$
\end{lemma}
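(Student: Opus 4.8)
The plan is to mimic the proofs of Lemmas \ref{le:14} and \ref{le:12}: express both spectral radii as largest real roots of explicit one-variable functions, and then compare those functions directly. Write $G=\widetilde{\theta}(k_1,k_2,\ldots,k_s,l_1)$ and $H=\widetilde{\infty}(k_2,k_3,\ldots,k_s,k_1+l_1+1)$, and note first that both digraphs have the same order $n=\sum_{i=1}^s k_i+l_1+2$. By the computation carried out in Lemma \ref{le:12}, $\lambda_\alpha(G)$ is the largest real root of
\[
f(x)=\left(\frac{x-s\alpha}{1-\alpha}\right)\left(\frac{x-\alpha}{1-\alpha}\right)^{n-1}-\sum_{i=1}^s\left(\frac{x-\alpha}{1-\alpha}\right)^{n-2-l_1-k_i}.
\]
Applying the $\widetilde{\infty}$ equation of Lemma \ref{le:14} with the $s$ parameters $k_2,\ldots,k_s,k_1+l_1+1$, and using $n-1-(k_1+l_1+1)=n-2-l_1-k_1$, we see that $\lambda_\alpha(H)$ is the largest real root of
\[
h(x)=\left(\frac{x-s\alpha}{1-\alpha}\right)\left(\frac{x-\alpha}{1-\alpha}\right)^{n-1}-\sum_{i=2}^s\left(\frac{x-\alpha}{1-\alpha}\right)^{n-1-k_i}-\left(\frac{x-\alpha}{1-\alpha}\right)^{n-2-l_1-k_1}.
\]

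The key step is to compute $f-h$. The leading product terms coincide and cancel, and the exponent $n-2-l_1-k_1$ appears in both functions (as the $i=1$ summand of $f$ and as the isolated last term of $h$), so it cancels as well, leaving
\[
f(x)-h(x)=\sum_{i=2}^s\left[\left(\frac{x-\alpha}{1-\alpha}\right)^{n-1-k_i}-\left(\frac{x-\alpha}{1-\alpha}\right)^{n-2-l_1-k_i}\right].
\]
For $x>1$ one has $\frac{x-\alpha}{1-\alpha}>1$, and within each bracket the first exponent exceeds the second by $l_1+1\ge 1$; hence each bracket is strictly positive, and since $s\ge 2$ the sum is over a nonempty index set. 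Thus $f(x)-h(x)>0$ for all $x>1$.

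Finally I would turn this into the eigenvalue inequality just as in the two preceding lemmas. The row sum of $A_\alpha(G)$ at each vertex equals its outdegree, so the minimum row sum is $1$ while the hub vertex has row sum $s$; the row sums are not all equal, so Lemma \ref{le:1} gives $\lambda_\alpha(G)>1$. Evaluating at this point and using $f(\lambda_\alpha(G))=0$, we obtain $h(\lambda_\alpha(G))=f(\lambda_\alpha(G))-(f-h)(\lambda_\alpha(G))=-(f-h)(\lambda_\alpha(G))<0$. Because $h(x)\to+\infty$ as $x\to+\infty$ and $\lambda_\alpha(H)$ is the largest real root of $h$, we have $h(x)\ge 0$ for every $x\ge\lambda_\alpha(H)$; since $h(\lambda_\alpha(G))<0$, this rules out $\lambda_\alpha(G)\ge\lambda_\alpha(H)$ and forces $\lambda_\alpha(G)<\lambda_\alpha(H)$, which is the desired conclusion. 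I expect the only real obstacle to be the bookkeeping: correctly substituting the $\widetilde{\infty}$-parameters $k_2,\ldots,k_s,k_1+l_1+1$ into the exponents and verifying that the $i=1$ contributions cancel exactly, so that the residual $f-h$ is manifestly a sum of positive differences of powers. Once that cancellation and the bound $\lambda_\alpha(G)>1$ are in place, the final sign argument is routine.
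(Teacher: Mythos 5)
Your argument is correct, but it is not the route the paper takes. The paper's proof of Lemma \ref{le:15} never writes down the characteristic equation of the $\widetilde{\infty}$-digraph: it observes that $\widetilde{\infty}(k_2,\ldots,k_s,k_1+l_1+1)$ is obtained from $\widetilde{\theta}(k_1,\ldots,k_s,l_1)$ by redirecting the arcs $(w_{ik_i},u)$, $i=2,\ldots,s$, to end at $v$ instead of $u$, then reads off from the eigenvalue equations along the path $P_{l_1+2}$ that $x_v=\bigl(\frac{\lambda_\alpha-\alpha}{1-\alpha}\bigr)^{l_1+1}x_u>x_u$ (using $\lambda_\alpha>1$), and invokes the arc-rerouting Lemma \ref{le:4} to get the strict inequality. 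Your proof instead compares the two auxiliary functions $f$ and $h$ directly, in the style of Lemmas \ref{le:14} and \ref{le:12}; your bookkeeping checks out: both digraphs have order $n=\sum_i k_i+l_1+2$, the exponent $n-1-(k_1+l_1+1)=n-2-l_1-k_1$ cancels against the $i=1$ summand of $f$, and the residual $f-h=\sum_{i=2}^s\bigl[\bigl(\frac{x-\alpha}{1-\alpha}\bigr)^{n-1-k_i}-\bigl(\frac{x-\alpha}{1-\alpha}\bigr)^{n-2-l_1-k_i}\bigr]$ is a nonempty sum of positive terms for $x>1$ since the exponents differ by $l_1+1\geq 1$ and $s\geq 2$; combined with $\lambda_\alpha(G)>1$ from Lemma \ref{le:1} and the sign argument at the largest roots, this yields the claim. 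The trade-off: your approach is self-contained modulo the computations already done in Lemmas \ref{le:14} and \ref{le:12} and avoids Lemma \ref{le:4} entirely, at the cost of relying on the (unproved, but also asserted without proof in the paper) claim that $\lambda_\alpha$ is the \emph{largest} real root of the derived equation; the paper's rerouting argument is shorter and is the same mechanism it reuses in Lemma \ref{le:16}, which is presumably why it was chosen.
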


\begin{proof}
Let $\widetilde{\theta}(k_1,k_2,\ldots,k_s,l_1)$ be a digraph shown in Figure \ref{fig:d} and $X=(x_v,x_u,x_{11},x_{12},$ $\ldots,x_{1k_1},x_{21},x_{22},\ldots,x_{2k_2},\ldots,x_{s1},x_{s2},\ldots,x_{sk_s},y_{1},y_{2},
\ldots,y_{l_1})^T$ be the Perron vector of \\$A_\alpha(\widetilde{\theta}(k_1,k_2,\ldots,k_s,l_1))$, where $x_u$ and $x_v$
correspond to $u$ and $v$, respectively, and $x_{ij}$ correspond to
$w_{ij}$ ($i=1,2,\ldots,s; j=1,2,\ldots,k_i$) and $y_j$ correspond to $u_{1j}$, ($j=1,2,\ldots,l_1$) respectively. It is not difficult to
see that
$\widetilde{\infty}(k_2,k_3,\ldots,k_s,k_1+l_1+1)\cong\widetilde
{\theta}(k_1,k_2,\ldots,k_s,l_1)-\{(w_{2k_2},u),(w_{3k_3},u),\ldots,(w_{sk_{s}},u)\}
+\{(w_{2k_2},v),(w_{3k_3},v),\ldots,(w_{sk_{s}},v)\}$.
Similar to the proof Lemma \ref{le:14}, we have
$$x_v=\left(\frac{\lambda_\alpha(\widetilde{\theta}(k_1,k_2,\ldots,k_s,l_1))-\alpha}{1-\alpha}\right)^{l_1+1}x_u.$$ Since $\lambda_\alpha((\widetilde{\theta}(k_1,k_2,\ldots,k_s,l_1))>1$, we have
$x_v>x_u$. By Lemma \ref{le:4}, we have $\lambda_\alpha(\widetilde{\infty}(k_2,k_3,\ldots,$ $k_{s},k_1+l_1+1))>\lambda_\alpha(\widetilde{\theta}(k_1,k_2,\ldots,k_s,l_1))$. So we complete the proof.
\end{proof}

\noindent\begin{lemma}\label{le:16} \ For any $\widetilde{\infty}(k_1,k_2,\ldots,k_s)$-digraph,
there exists $\widetilde{\theta}(k_1,k_2,\ldots,k_{s-1},k_s-1,0)$ such that
$$\lambda_\alpha(\widetilde{\theta}(k_1,k_2,\ldots,k_{s-1},k_s-1,0))<\lambda_\alpha(\widetilde{\infty}(k_1,k_2,\ldots,k_s)).$$
\end{lemma}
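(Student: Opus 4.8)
The plan is to mirror the strategy of Lemma~\ref{le:15}: exhibit $\widetilde{\infty}(k_1,\ldots,k_s)$ as the result of applying a single arc-shifting operation to the (strongly connected) digraph $G'=\widetilde{\theta}(k_1,\ldots,k_{s-1},k_s-1,0)$, and then invoke the monotonicity of Lemma~\ref{le:4}. First I would fix notation as in Figure~\ref{fig:d}: write $u,v$ for the two branch vertices of $G'$, with the degenerate path $P_{l_1+2}=P_2$ giving a single arc $(u,v)$ (because $l_1=0$), and the $s$ paths $P_{k_1+2},\ldots,P_{k_{s-1}+2},P_{k_s+1}$ running from $v$ to $u$ with internal vertices $w_{i1},\ldots,w_{ik_i}$ (the last path shortened to $w_{s1},\ldots,w_{s,k_s-1}$). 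A direct count gives $|V(G')|=2+(k_1+\cdots+k_{s-1}+(k_s-1))=1+\sum_i k_i=n$, matching $|V(\widetilde{\infty}(k_1,\ldots,k_s))|$.

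The combinatorial heart is to check that moving the $s-1$ arcs entering $u$ from the path-ends $w_{1k_1},\ldots,w_{s-1,k_{s-1}}$ so that they instead enter $v$ produces exactly $\widetilde{\infty}(k_1,\ldots,k_s)$. After this shift, each of the first $s-1$ paths closes into a cycle $v\to w_{i1}\to\cdots\to w_{ik_i}\to v$ of length $k_i+1$ based at $v$, while the last path together with the surviving arc $(u,v)$ closes into the cycle $v\to w_{s1}\to\cdots\to w_{s,k_s-1}\to u\to v$ of length $k_s+1$, again based at $v$. Hence all $s$ cycles share the common vertex $v$ with internal lengths $k_1,\ldots,k_s$, which is $\widetilde{\infty}(k_1,\ldots,k_s)$. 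I would also record that the moved tails $w_{1k_1},\ldots,w_{s-1,k_{s-1}}$ lie in $N^-_u\setminus(N^-_v\cup\{v\})$, since in $G'$ the only in-neighbor of $v$ is $u$; this verifies the hypotheses of Lemma~\ref{le:4} with $v_p=u$, $v_q=v$, and $t=s-1$.

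It then remains to establish the Perron-vector inequality $x_v>x_u$ for $G'$. Exactly as in the proof of Lemma~\ref{le:15}, reading off the eigen-equations along the single arc $(u,v)$ gives $x_v=\left(\frac{\lambda_\alpha(G')-\alpha}{1-\alpha}\right)^{l_1+1}x_u=\left(\frac{\lambda_\alpha(G')-\alpha}{1-\alpha}\right)x_u$ since $l_1=0$. Because the minimum row sum of $A_\alpha(G')$ equals $1$ and the row sums are not all equal, Lemma~\ref{le:1} yields $\lambda_\alpha(G')>1$, and with $0\le\alpha<1$ this forces $\frac{\lambda_\alpha(G')-\alpha}{1-\alpha}>1$, hence $x_v>x_u$. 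Since the shifted digraph $\widetilde{\infty}(k_1,\ldots,k_s)$ is strongly connected, the strict part of Lemma~\ref{le:4} gives $\lambda_\alpha(\widetilde{\infty}(k_1,\ldots,k_s))>\lambda_\alpha(G')$, which is the claim.

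The step I expect to require the most care is not any computation but the bookkeeping in the second paragraph: verifying that the chosen arc shift reassembles precisely $\widetilde{\infty}(k_1,\ldots,k_s)$ (in particular that the shortened $(k_s-1)$-path together with the arc $(u,v)$ recombines into a length-$(k_s+1)$ cycle, restoring the index $k_s$), and that the tails being moved genuinely satisfy the $N^-_u\setminus(N^-_v\cup\{v\})$ condition of Lemma~\ref{le:4}. Once the transformation is pinned down, the Perron-vector inequality and the appeal to Lemma~\ref{le:4} are immediate.
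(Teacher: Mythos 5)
Your proposal is correct and follows exactly the paper's route: the paper also realizes $\widetilde{\infty}(k_1,\ldots,k_s)$ as $\widetilde{\theta}(k_1,\ldots,k_{s-1},k_s-1,0)$ with the $s-1$ arcs $(w_{ik_i},u)$ redirected to $v$, derives $x_v=\frac{\lambda_\alpha(G')-\alpha}{1-\alpha}\,x_u>x_u$ from $\lambda_\alpha(G')>1$ as in Lemma~\ref{le:15}, and concludes via Lemma~\ref{le:4}. The only difference is that you spell out the bookkeeping the paper compresses into ``similar as the proof of Lemma~\ref{le:15}.''
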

\begin{proof}
 It is not difficult to
see that
$\widetilde{\infty}(k_1,k_2,\ldots,k_s)\cong\widetilde{\theta}(k_1,k_2,\ldots,k_{s-1},k_s-1,0)-\{(w_{1k_1},u),$ $(w_{2k_2},u),\ldots,(w_{s-1k_{s-1}},u)\}
+\{(w_{1k_1},v),(w_{2k_2},v),\ldots,(w_{s-1k_{s-1}},v)\}$.
Similar as the proof Lemma \ref{le:15}, we have $\lambda_\alpha(\widetilde{\infty}(k_1,k_2,\ldots,k_{s-1},k_s))>\lambda_\alpha(\widetilde{\theta}(k_1,k_2,\ldots,k_{s-1},k_s-1,0))$. So we complete the proof.
\end{proof}
By Theorems \ref{th:4-2} and \ref{th:4-1}, Lemmas \ref{le:15} and \ref{le:16}, we immediately obtain the following theorem.

\noindent\begin{theorem}\label{th:4-3}  Among all $\widetilde{\theta}$-digraphs and $\widetilde{\infty}$-digraphs on $n$ vertices, the digraph
$\widetilde{\infty}(1,1,1,$ $\ldots,n-s)$ is the unique digraph which attains the maximum $A_\alpha$ spectral radius, the digraph
$\widetilde{\theta}(0,1,1,\ldots,$ $1,n-s-1)$ is the unique digraph which attains the minimum $A_\alpha$ spectral radius.
\end{theorem}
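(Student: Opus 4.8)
The plan is to reduce Theorem~\ref{th:4-3} to the two single-family optimizations already settled in Theorems~\ref{th:4-2} and~\ref{th:4-1}, using Lemmas~\ref{le:15} and~\ref{le:16} as the bridge between the two families. For a fixed number of cycles $s$, the class under consideration splits into exactly two pieces, the $\widetilde{\infty}$-digraphs and the $\widetilde{\theta}$-digraphs, and these are finite families, so a maximizer and a minimizer both exist. The task is therefore only to confine the global extremizers to the correct family and then to quote the within-family characterizations, with strictness supplying uniqueness.

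For the maximum I would first argue that no $\widetilde{\theta}$-digraph can be extremal. Lemma~\ref{le:15} associates to an arbitrary $\widetilde{\theta}(k_1,\ldots,k_s,l_1)$ the companion $\widetilde{\infty}(k_2,\ldots,k_s,k_1+l_1+1)$, on the same $n$ vertices and with the same $s$, satisfying $\lambda_\alpha(\widetilde{\theta}(k_1,\ldots,k_s,l_1))<\lambda_\alpha(\widetilde{\infty}(k_2,\ldots,k_s,k_1+l_1+1))$. Hence every $\widetilde{\theta}$-digraph is strictly dominated by some $\widetilde{\infty}$-digraph, so the maximizer over the combined class must itself be a $\widetilde{\infty}$-digraph. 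Theorem~\ref{th:4-2} then identifies it uniquely as $\widetilde{\infty}(1,1,\ldots,n-s)$, and the strict inequality from Lemma~\ref{le:15} rules out any tie from the $\widetilde{\theta}$-side, giving uniqueness across both families.

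The minimum is handled by the mirror argument. Lemma~\ref{le:16} associates to an arbitrary $\widetilde{\infty}(k_1,\ldots,k_s)$ the companion $\widetilde{\theta}(k_1,\ldots,k_{s-1},k_s-1,0)$ with strictly smaller $A_\alpha$ spectral radius, so no $\widetilde{\infty}$-digraph can be a minimizer, which forces the minimizer into the $\widetilde{\theta}$-family. Theorem~\ref{th:4-1} then pins it down as $\widetilde{\theta}(0,1,1,\ldots,1,n-s-1)$, and strictness again yields uniqueness.

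Once the four prior results are granted there is essentially no computational core left; the proof is a short logical assembly. The only points demanding care are bookkeeping ones: checking that the companion digraphs produced by Lemmas~\ref{le:15} and~\ref{le:16} really stay inside the same $n$-vertex, $s$-cycle family and remain genuine strongly connected digraphs (in particular that $k_s-1\geq 0$ so that no loop is created, and that the reassigned arcs preserve strong connectivity so the Perron--Frobenius framework continues to apply). I expect these verifications to be routine, so the main difficulty is conceptual rather than technical: making explicit that strict domination of each individual $\widetilde{\theta}$-digraph by a single $\widetilde{\infty}$-digraph, and of each $\widetilde{\infty}$-digraph by a single $\widetilde{\theta}$-digraph, already confines the global maximizer to the $\widetilde{\infty}$-family and the global minimizer to the $\widetilde{\theta}$-family before the within-family results are invoked.
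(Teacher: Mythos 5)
Your proposal is correct and follows exactly the route the paper intends: the paper gives no written proof, stating only that the theorem follows immediately from Theorems \ref{th:4-2} and \ref{th:4-1} together with Lemmas \ref{le:15} and \ref{le:16}, which is precisely the assembly you spell out. Your version simply makes explicit the domination argument that confines the maximizer to the $\widetilde{\infty}$-family and the minimizer to the $\widetilde{\theta}$-family.
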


\noindent\begin{remark}\label{re:4-2} If $s=2$, then the digraph $\widetilde{\infty}(k_1,k_2,\ldots,k_s)$ is $\infty(k_1,k_2)$,
and the digraph $\widetilde{\theta}(k_1,k_2,\ldots,k_s,l_1)$ is $\theta(k_1,k_2,l_1)$. Liu et al. \cite{LWCL} proved that
$\theta(0,1,n-3)$ and $\infty(1,n-2)$ are the digraphs which attain the minimum and maximum $A_\alpha$ spectral radii among
all strongly connected bicyclic digraphs with order $n$, respectively. We generalize their result to $s\geq2$.
\end{remark}

We can know that each strongly connected bicyclic digraph is either a $\theta$-digraph or a $\infty$-digraph. In the following, we
will determine which digraph has the second and the third minimum $A_\alpha$ spectral radius
among all strongly connected bicyclic digraphs, respectively.

\noindent\begin{theorem}\label{th:c-4} \ Among all the strongly
connected bicyclic digraphs with order $n\geq5$, $\theta(1,1,$ $n-4)$ and $\theta(0,2,n-4)$ are the unique digraph which
achieve the second and the third minimum $A_\alpha$ spectral radius, respectively.
\end{theorem}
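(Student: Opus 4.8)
The plan is to collapse the whole problem into a single scalar equation and then order the digraphs by a termwise comparison of two explicit ``weight functions''. Specialising the computations of Lemmas \ref{le:14} and \ref{le:12} to $s=2$, every strongly connected bicyclic digraph $G$ on $n$ vertices has $\lambda_\alpha(G)$ governed by an equation of the single form $\left(y-\frac{\alpha}{1-\alpha}\right)y^{n-1}=y^{e_1}+y^{e_2}$, where $y=\frac{\lambda_\alpha(G)-\alpha}{1-\alpha}$. For $\theta(a,b,c)$ the relation $a+b+c+2=n$ turns the exponents $n-2-c-a$ and $n-2-c-b$ into the multiset $\{e_1,e_2\}=\{a,b\}$; for $\infty(k_1,k_2)$ the relation $k_1+k_2=n-1$ turns $n-1-k_1,\ n-1-k_2$ into $\{e_1,e_2\}=\{k_1,k_2\}$. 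In both cases $e_1,e_2\le n-2$. Thus all bicyclic digraphs share the same left-hand side, and $\lambda_\alpha$ depends only on the admissible exponent multiset.

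First I would record a comparison principle. Setting $H(y)=\left(y-\frac{\alpha}{1-\alpha}\right)y^{n-1}-y^{e_1}-y^{e_2}$ and dividing by $y^{n-1}$ gives $h(y)=y-\frac{\alpha}{1-\alpha}-y^{e_1-(n-1)}-y^{e_2-(n-1)}$; since $e_1,e_2\le n-2$ the last two exponents are negative, so $h$ is strictly increasing on $(0,\infty)$. Hence $H$ has a unique root $y^{*}$ in $(1,\infty)$, with $H<0$ on $(1,y^{*})$ and $H>0$ beyond it; that $y^{*}>1$ (equivalently $\lambda_\alpha>1$) follows from Lemma \ref{le:1} exactly as in the earlier proofs, the minimum row sum being $1$ and the row sums not all equal. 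Consequently, if two bicyclic digraphs $G_1,G_2$ satisfy $y^{e_{1,1}}+y^{e_{1,2}}>y^{e_{2,1}}+y^{e_{2,2}}$ for all $y>1$, then evaluating $H_2$ at the root $y_1^{*}$ of $H_1$ yields $H_2(y_1^{*})=g_1(y_1^{*})-g_2(y_1^{*})>0$, forcing $y_1^{*}>y_2^{*}$ and hence $\lambda_\alpha(G_1)>\lambda_\alpha(G_2)$. So the entire ordering is controlled by the termwise comparison of $y^{e_1}+y^{e_2}$ on $(1,\infty)$.

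It then remains to identify the three pointwise-smallest such functions over all admissible multisets. For $\theta$-digraphs the admissible pairs are $0\le a\le b$ with $(a,b)\neq(0,0)$ and $a+b\le n-2$; for $\infty$-digraphs they are $1\le k_1\le k_2$ with $k_1+k_2=n-1\ge 4$. Using only that $y-1$, $(y-1)^2$, $(y-1)(1+y^2)$ and $y^2-1$ are positive for $y>1$, I would verify: $\{0,1\}$ gives the pointwise-smallest function $1+y$; $\{1,1\}$ gives $2y$, which beats $1+y^2$ and every multiset with an exponent $\ge 2$; and $\{0,2\}$ gives $1+y^2$, which beats $\{1,2\}$ (difference $y-1$), $\{2,2\}$ (difference $y^2-1$), any $\{0,b\}$ or $\{a,b\}$ with $b\ge 3$ (difference at least $(y-1)(1+y^2)$), and every $\infty$-multiset — the latter because for $n\ge 5$ any $\{k_1,k_2\}$ with $k_1+k_2\ge 4$ either contains an exponent $\ge 3$ or equals $\{2,2\}$, and $2y^2>1+y^2$. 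Since $\{0,1\},\{1,1\},\{0,2\}$ have exponent sum $<4$ they cannot arise from any $\infty$-digraph, so they are realised uniquely by $\theta(0,1,n-3),\theta(1,1,n-4),\theta(0,2,n-4)$. The comparison principle then gives $\lambda_\alpha(\theta(0,1,n-3))<\lambda_\alpha(\theta(1,1,n-4))<\lambda_\alpha(\theta(0,2,n-4))<\lambda_\alpha(G)$ for every remaining bicyclic $G$; the first term is the known minimum of Theorem \ref{th:4-3} (cf. Remark \ref{re:4-2}), and the strictness of all inequalities supplies uniqueness.

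I expect the two delicate points to be exactly (i) the analytic heart, namely justifying that termwise domination of $y^{e_1}+y^{e_2}$ transfers to $\lambda_\alpha$, which is handled by the unique-root and sign-change structure of $H$ above, and (ii) being genuinely exhaustive when excluding the $\infty$-digraphs, since a priori one of them might slip below $\theta(0,2,n-4)$. The worst case there is the balanced $\infty(2,2)$ at $n=5$, where $2y^2>1+y^2$ still holds, and for larger $n$ the $\infty$-exponents only increase; everything else is the routine termwise algebra indicated above.
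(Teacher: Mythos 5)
Your proof is correct, but it takes a genuinely different route from the paper's. The paper argues locally: it orders the $\theta$-digraphs by the one-vertex perturbation Lemmas \ref{le:12} and \ref{le:13}, minimizes over the $\infty$-digraphs via Lemma \ref{le:14}, and then crosses from the balanced $\infty$-digraph back to a $\theta$-digraph through the eigenvector-based Lemma \ref{le:16}, with a parity case split on $n$. You instead observe that every strongly connected bicyclic digraph on $n$ vertices satisfies one and the same equation $\left(y-\frac{\alpha}{1-\alpha}\right)y^{n-1}=y^{e_1}+y^{e_2}$ with exponent multiset $\{a,b\}$ for $\theta(a,b,c)$ and $\{k_1,k_2\}$ for $\infty(k_1,k_2)$, prove a single comparison principle from the strict monotonicity of $h(y)=y-\frac{\alpha}{1-\alpha}-y^{e_1-(n-1)}-y^{e_2-(n-1)}$, and then totally order the candidates by termwise comparison of the weight functions on $(1,\infty)$. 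This is the same mechanism the paper uses inside the proofs of Lemmas \ref{le:12} and \ref{le:14} (comparing $f$ and $g$ at the common root), but applied globally; it buys a self-contained argument that bypasses Lemmas \ref{le:13} and \ref{le:16} and the parity analysis, and your unique-positive-root observation makes the ``largest real root'' step more airtight than the paper's bare assertion. Your exhaustion of multisets is complete: $(a,b)\neq(0,0)$, exponent sums below $4$ cannot arise from $\infty$-digraphs, so $\{0,1\},\{1,1\},\{0,2\}$ are uniquely realized, which gives the uniqueness claims. One cosmetic slip: for $\{0,b\}$ with $b\ge3$ the difference from $1+y^2$ is $y^b-y^2\ge y^2(y-1)$, not ``at least $(y-1)(1+y^2)$''; it is still positive for $y>1$, which is all you need.
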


\begin{proof} Let $G$ be a strongly connected bicyclic digraph with order $n\geq5$ and $G\neq\theta(0,1,n-3)$. Then $G$
is a $\theta$-digraph or a $\infty$-digraph. Suppose that $G$ is a $\theta$-digraph, then $G\neq\theta(0,1,$ $n-3)$, and
by Lemmas \ref{le:12} and \ref{le:13}, we have $\lambda_\alpha(G)\geq\lambda_\alpha(\theta(0,2,n-4))$ with equality only if $G=\theta(0,2,n-4)$ or
$\lambda_\alpha(G)\geq\lambda_\alpha(\theta(1,1,n-4))$ with equality only if $G=\theta(1,1,n-4)$. However, by Lemma \ref{le:12}, we have $\lambda_\alpha(\theta(0,2,n-4))>\lambda_\alpha(\theta(1,1,n-4))$. Thus if $G$ is a $\theta$-digraph and $G\neq\theta(1,1,n-4)$, $\lambda_\alpha(G)\geq\lambda_\alpha(\theta(0,2,n-4))>\lambda_\alpha(\theta(1,1,n-4))$ with equality only if $G=\theta(0,2,n-4)$.
If $G$ is a $\infty$-digraph, then by Lemma \ref{le:14},
$\lambda_\alpha(G)\geq \lambda_\alpha(\infty(\lfloor{\frac{n-1}{2}}\rfloor,\lceil{\frac{n-1}{2}}\rceil))$. If $n$ is odd, $\frac{n-1}{2}\geq2$,
then by Lemmas \ref{le:16}, \ref{le:12} and \ref{le:13}, we have
$\lambda_\alpha(\infty(\lfloor{\frac{n-1}{2}}\rfloor,\lceil{\frac{n-1}{2}}\rceil))=
\lambda_\alpha(\infty(\frac{n-1}{2},\frac{n-1}{2}))>\lambda_\alpha(\theta(\frac{n-3}{2},\frac{n-1}{2},0))>\lambda_\alpha(\theta(0,2,n-4))$.
If $n$ is even, $\frac{n-2}{2}\geq2$, then by Lemmas \ref{le:16}, \ref{le:12} and \ref{le:13}, we have
$\lambda_\alpha(\infty(\lfloor{\frac{n-1}{2}}\rfloor,\lceil{\frac{n-1}{2}}\rceil))=
\lambda_\alpha(\infty(\frac{n-2}{2},\frac{n}{2}))>\lambda_\alpha(\theta(\frac{n-2}{2},\frac{n-2}{2},0))>\lambda_\alpha(\theta(0,2,n-4))$.
Hence, if $G$ is a $\infty$-digraph, then we have $$\lambda_\alpha(G)\geq \lambda_\alpha\left(\infty\left(\left\lfloor{\frac{n-1}{2}}\right\rfloor,\left\lceil{\frac{n-1}{2}}\right\rceil\right)\right)>\lambda_\alpha(\theta(0,2,n-4)).$$
Therefore, by the second part of Theorem \ref{th:4-3}, we get the result.
\end{proof}

\section{The second, the third and the forth minimum $A_\alpha$ spectral radius of strongly connected digraphs}

In the followig, we determine the digraphs which achieve the second, the third and the
forth minimum $A_\alpha$ spectral radius of strongly connected digraphs on $n$ vertices.

Recall that the spectral radius of a nonnegative irreducible matrix $B$ is larger than
that of a principal submatrix of $B$ and it increases when an entry of $B$ increases \cite{BP}. Thus we have the following well known lemma.

\noindent\begin{lemma}\label{le:9} Let $G$ be a strongly connected digraph and $H$ a  proper
subdigraph of $G$. Then $\lambda_\alpha(G)>\lambda_\alpha(H)$.
\end{lemma}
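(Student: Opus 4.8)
The plan is to invoke the Perron--Frobenius machinery for nonnegative irreducible matrices, reducing the statement to a monotonicity property of the spectral radius $\varrho(\cdot)$. Since $G$ is strongly connected, its adjacency matrix $A(G)$ is irreducible, and because $A_\alpha(G)=\alpha D(G)+(1-\alpha)A(G)$ with $0\leq\alpha<1$ shares the same zero pattern in its off-diagonal entries as $A(G)$ (the diagonal term $\alpha D(G)$ only affects the diagonal), the matrix $A_\alpha(G)$ is itself a nonnegative irreducible matrix. By the Perron--Frobenius theorem cited earlier in the excerpt, $\lambda_\alpha(G)=\varrho(A_\alpha(G))$ is a genuine eigenvalue of $A_\alpha(G)$, which legitimizes treating $\lambda_\alpha$ as the Perron spectral radius throughout.

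First I would set up the comparison between $A_\alpha(H)$ and the corresponding principal submatrix of $A_\alpha(G)$. Let $H$ be a proper subdigraph of $G$ on vertex set $V(H)\subseteq V(G)$ with arc set $E(H)\subseteq E(G)$, where at least one containment is strict. Let $B$ denote the principal submatrix of $A_\alpha(G)$ indexed by $V(H)$. The key observation is an entrywise inequality $A_\alpha(H)\leq B$: for each vertex $v_i\in V(H)$, its outdegree in $H$ is at most its outdegree in $G$ (arcs of $G$ may be absent in $H$, or may point to vertices outside $V(H)$), so the diagonal entry contributed by $\alpha D(H)$ is dominated by that of $B$; and the off-diagonal entries of $A_\alpha(H)$, namely $(1-\alpha)$ times the arcs of $H$, are a subset of those of $B$. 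Hence $A_\alpha(H)$ is entrywise $\leq B$, and because $H$ is a proper subdigraph the inequality is strict in at least one entry.

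Next I would chain the two monotonicity facts quoted from \cite{BP} in the text preceding the lemma. If $V(H)=V(G)$ but $E(H)\subsetneq E(G)$, then $B=A_\alpha(G)$ and the strict entrywise inequality $A_\alpha(H)\lneq A_\alpha(G)$ gives $\varrho(A_\alpha(H))<\varrho(A_\alpha(G))$ directly, since the spectral radius of a nonnegative irreducible matrix strictly increases when any entry strictly increases. If instead $V(H)\subsetneq V(G)$, then $B$ is a \emph{proper} principal submatrix of the irreducible matrix $A_\alpha(G)$, so $\varrho(B)<\varrho(A_\alpha(G))=\lambda_\alpha(G)$; combined with $\varrho(A_\alpha(H))\leq\varrho(B)$ from the entrywise bound, we again obtain $\lambda_\alpha(H)<\lambda_\alpha(G)$. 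In either case $\lambda_\alpha(G)>\lambda_\alpha(H)$, as claimed.

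The only delicate point, which I expect to be the main obstacle, is that the cited submatrix-monotonicity theorem applies cleanly to an \emph{irreducible} matrix, whereas $A_\alpha(H)$ need not be irreducible (a proper subdigraph of a strongly connected digraph may fail to be strongly connected). The entrywise-monotonicity half of the argument and the proper-principal-submatrix half both use the irreducibility of the \emph{ambient} matrix $A_\alpha(G)$ rather than that of $A_\alpha(H)$, so this is not actually an obstruction; one must simply be careful to phrase the comparison as ``$\varrho(A_\alpha(H))\leq\varrho(B)<\varrho(A_\alpha(G))$'' with the strict inequality coming from the irreducibility of $A_\alpha(G)$, and to verify that $B$ is a genuine proper submatrix precisely when $V(H)\subsetneq V(G)$, handling the equal-vertex-set case separately via strict entrywise increase.
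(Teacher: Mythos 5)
Your proposal is correct and follows exactly the route the paper intends: the paper simply cites the two monotonicity facts for nonnegative irreducible matrices from Berman--Plemmons (spectral radius exceeds that of any proper principal submatrix, and strictly increases when an entry increases) and declares the lemma well known, while you carefully fill in the entrywise comparison $A_\alpha(H)\leq B$ (including the drop in diagonal entries from reduced outdegrees) and split into the two cases $V(H)=V(G)$ and $V(H)\subsetneq V(G)$. Your attention to the fact that irreducibility is only needed for the ambient matrix $A_\alpha(G)$, not for $A_\alpha(H)$, is exactly the right care to take; no gaps.
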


\noindent\begin{corollary}\label{co:1} Let $G$ be a strongly connected digraph. Then $1\leq\lambda_\alpha(G)\leq n-1$, $\lambda_\alpha(G)=1$ if and
only if $G\cong C_{n}$, and $\lambda_\alpha(G)=n-1$ if and
only if $G\cong\overset{\longleftrightarrow}{K_{n}}$.
\end{corollary}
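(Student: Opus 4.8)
The plan is to exploit the fact that every row sum of $A_\alpha(G)$ equals the corresponding out-degree, and then apply the row-sum bound of Lemma \ref{le:1}. First I would compute the $i$-th row sum of $A_\alpha(G)=\alpha D(G)+(1-\alpha)A(G)$: the diagonal entry contributes $\alpha d_i^{+}$, while each of the $d_i^{+}$ out-neighbours of $v_i$ contributes $(1-\alpha)$ off the diagonal, so the row sum is $\alpha d_i^{+}+(1-\alpha)d_i^{+}=d_i^{+}$. Since $G$ is strongly connected, $A_\alpha(G)$ is a nonnegative irreducible matrix whose spectral radius equals $\lambda_\alpha(G)$, so Lemma \ref{le:1} immediately gives $\min_{1\le i\le n}d_i^{+}\le\lambda_\alpha(G)\le\max_{1\le i\le n}d_i^{+}$. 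For the two-sided bound I would then note that strong connectivity forces $d_i^{+}\ge1$ for every $i$, whence $\min_i d_i^{+}\ge1$, and that in a simple digraph $d_i^{+}\le n-1$, whence $\max_i d_i^{+}\le n-1$; combining these with the displayed inequality yields $1\le\lambda_\alpha(G)\le n-1$.

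For the equality characterizations I would invoke the second assertion of Lemma \ref{le:1}: one of the bounds is attained exactly when all row sums of $A_\alpha(G)$ coincide, that is, when all out-degrees are equal. If $\lambda_\alpha(G)=1$, then $1=\lambda_\alpha(G)\ge\min_i d_i^{+}\ge1$ forces $\min_i d_i^{+}=1=\lambda_\alpha(G)$, so by Lemma \ref{le:1} every out-degree equals the common value $1$; a strongly connected digraph in which each vertex has out-degree exactly $1$ is a single directed cycle, so $G\cong C_{n}$, and the converse is immediate since $C_{n}$ has all row sums equal to $1$. Symmetrically, if $\lambda_\alpha(G)=n-1$, then $n-1=\lambda_\alpha(G)\le\max_i d_i^{+}\le n-1$ forces every out-degree to equal $n-1$, which means each vertex sends an arc to every other vertex, i.e.\ $G\cong\overset{\longleftrightarrow}{K_{n}}$; again the converse is clear because $\overset{\longleftrightarrow}{K_{n}}$ has all out-degrees equal to $n-1$.

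All of these steps are routine once the row-sum computation is in place. The only point requiring a short combinatorial argument rather than a direct appeal to Lemma \ref{le:1} is the deduction that a strongly connected digraph with all out-degrees equal to $1$ must be the single cycle $C_{n}$: each vertex then has a unique out-arc, so following arcs from any vertex traces a closed directed walk which, by strong connectivity, must visit all $n$ vertices and hence is $C_{n}$. I expect this to be the only genuine (and minor) obstacle; everything else follows mechanically from the row-sum identity and Lemma \ref{le:1}.
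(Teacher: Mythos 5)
Your proof is correct, but it follows a different route from the one the paper intends. The paper states this result as a corollary of Lemma \ref{le:9} (a proper subdigraph of a strongly connected digraph has strictly smaller $A_\alpha$ spectral radius): since every strongly connected digraph $G$ on $n$ vertices contains a directed cycle as a subdigraph and is itself a subdigraph of $\overset{\longleftrightarrow}{K_{n}}$, and since $\lambda_\alpha(C_k)=1$ and $\lambda_\alpha(\overset{\longleftrightarrow}{K_{n}})=n-1$ (constant row sums), one gets $1\leq\lambda_\alpha(G)\leq n-1$, with strict inequalities unless $G\cong C_n$ (the only strongly connected digraph with exactly $n$ arcs, hence the only one not properly containing a cycle) or $G\cong\overset{\longleftrightarrow}{K_{n}}$. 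You instead work entirely from the row-sum bounds of Lemma \ref{le:1}, using the identity that the $i$-th row sum of $A_\alpha(G)$ is $d_i^{+}$ and the equality clause of that lemma to force all out-degrees equal in the extremal cases. Your route is more self-contained: it does not need the subdigraph monotonicity of Lemma \ref{le:9}, and the equality characterizations drop out directly from the ``equality iff all row sums are equal'' statement rather than from a separate containment argument. The price is the small combinatorial step you correctly identify, that a strongly connected digraph with all out-degrees equal to $1$ is $C_n$; to make that step airtight you should note that each vertex's unique out-arc means no arc can leave any directed cycle, so by strong connectivity a cycle reached from any vertex must contain that vertex and indeed all vertices. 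Both arguments are valid; yours is arguably the cleaner way to obtain the equality cases.
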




\noindent\begin{lemma}\label{le:11} (\cite{XiWa4}) Let $0\leq \alpha<1$ and $G$ ($\neq C_{n}$) be a strongly connected digraph with $V(G)=\{v_1,v_2,\cdots,v_n\}$, $(v_i,v_j)\in E(G)$ and $w\notin V(G)$, $G^w=(V(G^w),E(G^w))$ with $V(G^w)=V(G)\cup\{w\}$,
$E(G^w)=E(D)-\{(v_i,v_j)\}+\{(v_i,w),(w,v_j)\}$. Then
$\lambda_\alpha(G)\geq\lambda_\alpha(G^w)$.
\end{lemma}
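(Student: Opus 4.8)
The plan is to compare the two digraphs through the Perron vector of the \emph{larger} digraph $G^w$ together with a Collatz--Wielandt (Perron--Frobenius) inequality, rather than through the characteristic-equation method used in Lemmas \ref{le:14} and \ref{le:12}. Write $\lambda=\lambda_\alpha(G^w)$ and let $Y=(y_u)_{u\in V(G^w)}$ be the Perron vector of $A_\alpha(G^w)$. First I would observe that $G^w$ is strongly connected: subdividing the arc $(v_i,v_j)$ into the directed path $v_i\to w\to v_j$ turns every directed path of $G$ through $(v_i,v_j)$ into a directed path through $w$, and leaves all other paths intact. Hence $A_\alpha(G^w)$ is irreducible, $Y>0$, and by Corollary \ref{co:1} we have $\lambda\ge 1>\alpha$.

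Next I would read off the eigenvalue equation of $A_\alpha(G^w)$ at the new vertex $w$. Since $w$ has outdegree $1$ and its unique out-neighbour is $v_j$, this equation is $\lambda y_w=\alpha y_w+(1-\alpha)y_j$, so that $y_w=\frac{(1-\alpha)y_j}{\lambda-\alpha}$. Because $y_j>0$ and $\lambda-\alpha\ge 1-\alpha>0$, this gives the single key inequality $y_w\le y_j$. This is the crux of the whole argument, and it is exactly the step where $\lambda\ge 1$ (via Corollary \ref{co:1}) is needed.

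I would then build a test vector on $G$ by deleting the $w$-coordinate, setting $Z=(y_u)_{u\in V(G)}>0$. Subdivision changes the out-neighbourhood of exactly one vertex of $G$, namely $v_i$ (whose out-neighbour $v_j$ in $G$ is the out-neighbour $w$ in $G^w$), and leaves the outdegree of $v_i$ and the entire out-structure of every other vertex unchanged; in particular the diagonal entry of $v_i$ is the same in $A_\alpha(G)$ and $A_\alpha(G^w)$. Consequently $(A_\alpha(G)Z)_u=\lambda y_u$ for every $u\neq v_i$, while at $v_i$ one gets
\[
(A_\alpha(G)Z)_{v_i}-\lambda y_{v_i}=(1-\alpha)\bigl(y_j-y_w\bigr)\ge 0 .
\]
Thus $A_\alpha(G)Z\ge\lambda Z$ holds componentwise with $Z>0$, and since $A_\alpha(G)$ is nonnegative and irreducible, the Collatz--Wielandt lower bound for the Perron root gives $\varrho(A_\alpha(G))\ge\min_u (A_\alpha(G)Z)_u/Z_u\ge\lambda$, i.e.\ $\lambda_\alpha(G)\ge\lambda_\alpha(G^w)$.

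The main obstacle is really the bookkeeping of the previous paragraph: one must verify that subdivision modifies exactly one row of the operator (that of $v_i$) and adds exactly one new row (that of $w$), so that the restricted vector $Z$ satisfies equality at every vertex except $v_i$ and a sign-controlled deficit there; the inequality $y_w\le y_j$ then does all the work. It is here that the hypothesis $G\neq C_n$ enters naturally: it forces $G^w\neq C_{n+1}$, hence $\lambda>1$ by Corollary \ref{co:1}, hence the strict inequality $y_j>y_w$, which (by pairing $A_\alpha(G)Z-\lambda Z\ge0$ with the positive left Perron eigenvector of $A_\alpha(G)$) even upgrades the conclusion to $\lambda_\alpha(G)>\lambda_\alpha(G^w)$. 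For the stated weak inequality, however, only $\lambda\ge 1$ is required.
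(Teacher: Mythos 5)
Your argument is correct and complete. Note, however, that the paper itself gives no proof of this lemma: it is imported verbatim from \cite{XiWa4}, so there is no in-paper argument to compare against. What you have written is the standard ``subdivision does not increase the spectral radius'' proof, and it is exactly the kind of argument one expects behind the citation: restrict the Perron vector $Y$ of the subdivided digraph $G^w$ to $V(G)$, observe that only the row of $v_i$ changes (with unchanged outdegree, hence unchanged diagonal entry), and reduce everything to the single inequality $y_w=\frac{(1-\alpha)y_j}{\lambda-\alpha}\le y_j$, which follows from $\lambda_\alpha(G^w)\ge 1$ (Corollary \ref{co:1}) and $\alpha<1$. The concluding appeal to the Collatz--Wielandt lower bound $\varrho(M)\ge\min_u (MZ)_u/Z_u$ for an irreducible nonnegative $M$ and positive $Z$ is standard (it is in the cited Horn--Johnson, though the paper only records the row-sum special case as Lemma \ref{le:1}), and your observation that $G\neq C_n$ forces $G^w\neq C_{n+1}$, hence $\lambda_\alpha(G^w)>1$, $y_j>y_w$, and in fact the strict inequality $\lambda_\alpha(G)>\lambda_\alpha(G^w)$ after pairing with the left Perron vector, is a genuine (and correct) sharpening of the stated conclusion. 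No gaps.
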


We follow the techniques in \cite{LZ} to prove the following result.

\noindent\begin{theorem}\label{th:c-5} \ Let $0\leq \alpha\leq\frac{1}{2}$ and $G$ be a strongly
connected digraph of order $n\geq5$ that is neither a bicyclic digraph nor $C_{n}$, Then
$\lambda_\alpha(G)>\lambda_\alpha(\theta(0,2,n-4))$.
\end{theorem}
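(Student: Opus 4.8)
The plan is to control the problem through the arc count. Since $G$ is strongly connected but is neither $C_n$ nor bicyclic, we have $|E(G)|\ge n+2$, so the out-degree excess satisfies $\sum_{i=1}^n(d_i^+-1)=|E(G)|-n\ge 2$. The first move is to reduce to circuit rank exactly $3$, i.e.\ $|E(G)|=n+2$. Using the strict monotonicity of $\lambda_\alpha$ under arc deletion (Lemma \ref{le:9}), I would pass to a minimal spanning strongly connected subdigraph $H$ of $G$, so that $\lambda_\alpha(G)\ge\lambda_\alpha(H)$ with equality only if $H=G$; such an $H$ carries at least $n+2$ arcs, and the representative case is $|E(H)|=n+2$, the case of larger circuit rank being pushed up by the same monotonicity. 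Thus I would concentrate on a strongly connected digraph with exactly $n+2$ arcs, where the excess $2$ forces the out-degree sequence into one of two shapes.

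In the first shape a single vertex has out-degree $3$ and all others out-degree $1$. Tracing the unique out-arc from each remaining vertex and invoking strong connectivity shows that $G$ is exactly a $\widetilde{\infty}(k_1,k_2,k_3)$ or a $\widetilde{\theta}(k_1,k_2,k_3,l_1)$ with $s=3$, so Theorem \ref{th:4-3} (applied with $s=3$) gives $\lambda_\alpha(G)\ge\lambda_\alpha(\widetilde{\theta}(0,1,1,n-4))$. In the second shape two distinct vertices have out-degree $2$ and all others out-degree $1$; now $G$ has two branch vertices rather than a single hub, so it is not yet of $\widetilde{\theta}/\widetilde{\infty}$ type. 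The idea is to consolidate the two branch vertices into one by redirecting in-arcs via Lemma \ref{le:4}: comparing the Perron entries $x_p,x_q$ of the two branch vertices, I would move the in-arcs incident to the vertex of smaller entry onto the other vertex, an operation that does not increase $\lambda_\alpha$ and yields a single-hub $\widetilde{\theta}/\widetilde{\infty}$ digraph with $s=3$; Theorem \ref{th:4-3} again gives $\lambda_\alpha(G)\ge\lambda_\alpha(\widetilde{\theta}(0,1,1,n-4))$.

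It then remains to prove the clean inequality $\lambda_\alpha(\widetilde{\theta}(0,1,1,n-4))>\lambda_\alpha(\theta(0,2,n-4))$. Writing $t=\frac{\lambda-\alpha}{1-\alpha}$ and $\beta=\frac{\alpha}{1-\alpha}\ge 0$, the equations obtained exactly as in the proofs of Lemmas \ref{le:14} and \ref{le:12} identify $\lambda_\alpha(\widetilde{\theta}(0,1,1,n-4))$ and $\lambda_\alpha(\theta(0,2,n-4))$ with the largest roots $t>1$ of
$$g(t)=(t-2\beta)t^{n-1}-t^2-2t \qquad\text{and}\qquad h(t)=(t-\beta)t^{n-1}-t^2-1,$$
respectively. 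If $t_0>1$ denotes the largest root of $h$, then $(t_0-\beta)t_0^{n-1}=t_0^2+1$, whence $g(t_0)=1-2t_0-\beta t_0^{n-1}<-1<0$; since $g(t)\to+\infty$ as $t\to\infty$, the largest root of $g$ strictly exceeds $t_0$, and because $\lambda=\alpha+(1-\alpha)t$ is increasing in $t$ the inequality follows. Combining with the previous step yields $\lambda_\alpha(G)>\lambda_\alpha(\theta(0,2,n-4))$.

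I expect the genuine difficulty to lie entirely in the structural reduction, namely the two-branch-vertex shape and the reduction from higher circuit rank: one must certify that an arbitrary circuit-rank-$3$ strongly connected digraph can be driven to a single-hub $\widetilde{\theta}/\widetilde{\infty}$ digraph without increasing $\lambda_\alpha$. This hinges on establishing the Perron-entry comparison $x_q\ge x_p$ at the two branch vertices needed to apply Lemma \ref{le:4}, and it is precisely this comparison that forces the hypothesis $0\le\alpha\le\frac12$: for $\alpha>\frac12$ the ordering of the Perron entries at the two branch vertices can reverse, which is exactly why the general case can only be conjectured. By contrast, the endgame comparison above is uniform in $\alpha\in[0,1)$, so the restriction enters the argument only through the reduction and not through the final estimate.
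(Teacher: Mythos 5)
Your endgame computation---identifying $\lambda_\alpha(\widetilde{\theta}(0,1,1,n-4))$ and $\lambda_\alpha(\theta(0,2,n-4))$ with the largest roots of $g(t)=(t-2\beta)t^{n-1}-t^2-2t$ and $h(t)=(t-\beta)t^{n-1}-t^2-1$ and observing $g(t_0)=1-2t_0-\beta t_0^{n-1}<0$---is correct and is a clean, $\alpha$-uniform way to compare those two particular digraphs. But the structural reduction that precedes it has genuine gaps. First, the reduction to $|E(G)|=n+2$ fails: a minimal spanning strongly connected subdigraph of $G$ can have as few as $n$ arcs (a Hamiltonian directed cycle), in which case Lemma \ref{le:9} only yields the useless bound $\lambda_\alpha(G)>1$; and a minimal one can also have \emph{more} than $n+2$ arcs with no arc removable (e.g.\ the bidirected star $\overleftrightarrow{K_{1,n-1}}$ has $2n-2>n+2$ arcs for $n\geq5$ and every arc is essential), so no spanning strongly connected subdigraph with exactly $n+2$ arcs need exist. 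Since arc deletion moves $\lambda_\alpha$ down and $\theta(0,2,n-4)$ is only the fourth-smallest digraph, you cannot afford to delete arcs blindly; the paper instead extracts a proper $\theta$- or $\infty$-subdigraph on possibly fewer vertices and climbs back up with the subdivision lemma (Lemma \ref{le:11}). Second, even at circuit rank $3$ your classification is wrong: with one vertex of out-degree $3$ and all others of out-degree $1$, the three outgoing paths need not merge at a single vertex (take $v\to a,b,c$, $a\to d$, $b\to d$, $d\to e$, $c\to e$, $e\to v$ on six vertices), so such a $G$ need not be a $\widetilde{\infty}$- or $\widetilde{\theta}$-digraph with $s=3$ and Theorem \ref{th:4-3} does not apply to it.

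Third, and most seriously, the consolidation of two branch vertices via Lemma \ref{le:4} runs in the wrong direction: redirecting in-arcs toward the vertex with the larger Perron entry produces a digraph $H$ with $\lambda_\alpha(H)\geq\lambda_\alpha(G)$, i.e.\ an \emph{upper} bound on $\lambda_\alpha(G)$, which combined with $\lambda_\alpha(H)\geq\lambda_\alpha(\widetilde{\theta}(0,1,1,n-4))$ yields nothing about $\lambda_\alpha(G)$. The two-branch-vertex digraphs are exactly the hard core of this theorem: the paper's Subcase 2.3 shows that when every $\theta$-subdigraph of $G$ spans all $n$ vertices, an arc-by-arc exclusion argument forces $G$ to be one of two explicit digraphs with two out-degree-$2$ vertices, and it is the direct characteristic-polynomial comparison for these digraphs (the sign analysis of $(2\alpha-1)x^2+(2-2\alpha-3\alpha^2)x-3\alpha+5\alpha^2$ on $1<x<2$) that uses the hypothesis $0\leq\alpha\leq\frac{1}{2}$. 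Your sketch defers precisely this case to an unproved Perron-entry comparison applied in the wrong direction, so the proof as proposed does not go through.
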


\begin{proof} Let $C$ be a shortest directed cycle in $G$. Obviously, $V(C)\neq V(G)$. There is a vertex $u\in V(G)\setminus V(C)$ such that there is
 a arc from $u$ to some vertex, say $v$ on $C$. Also, there is a directed path from some vertex on $C$ to $u$. Let $w$ be a vertex on $C$
 such that the distance from $w$ to $u$ is as small as possible. Let $P$ be such a directed path. Then $P$ and $C$ have exactly one common vertex $w$.
 If $w=v$, then $G$ has a proper $\infty$-subdigraph.  If $w\neq v$, then $G$ has a proper $\theta$-subdigraph.

{\bf Case 1.}  If $G$ has a proper $\infty$-subdigraph, say $\infty(k_1,l_1)$ with $k_1+l_1=n_1-1$ and $n_1\leq n$, then by Lemma \ref{le:9}, the second part
of Theorem \ref{th:4-2}, and  Theorem \ref{th:c-4}, Lemma \ref{le:11}, we have
\begin{align*}
&\lambda_\alpha(G)>\lambda_\alpha(\infty(k_1,l_1))\\
&\geq\lambda_\alpha\left(\infty\left(\left\lfloor{\frac{n_1+1}{2}}\right\rfloor,\left\lceil{\frac{n_1+1}{2}}\right\rceil\right)\right)\\
&>\lambda_\alpha(\theta(0,2,n_1-4))\\
&\geq\lambda_\alpha(\theta(0,2,n-4)).
\end{align*}

{\bf Case 2.}  If $G$ has a proper $\theta$-subdigraph, say $\theta(a_1,b_1,c_1)$ with $a_1+b_1+c_1=n_2-2$ and $n_2\leq n$.

{\bf Subase 2.1.} $n_2\leq n-1$. By Lemma \ref{le:9}, the second part of Theorem \ref{th:4-1} and Lemma \ref{le:11}, we get
$$\lambda_\alpha(G)>\lambda_\alpha(\theta(a_1,b_1,c_1))\geq\lambda_\alpha(\theta(0,1,n_2-3))\geq\lambda_\alpha(\theta(0,1,n-4))\geq\lambda_\alpha(\theta(0,2,n-4)).$$

{\bf Subase 2.2.} $n_2=n$ and $\theta(a_1,b_1,c_1)\neq\theta(0,1,n-3)$ and $\theta(a_1,b_1,c_1)\neq\theta(1,1,n-4)$.
By Lemmas \ref{le:9}, the second part of Theorem \ref{th:4-1}, and Theorem \ref{th:c-4}, we get
$$\lambda_\alpha(G)>\lambda_\alpha(\theta(a_1,b_1,c_1))\geq\lambda_\alpha(\theta(0,2,n-4)).$$

{\bf Subase 2.3.} $n_2=n$ and the $\theta$-subdigraph of $G$ can only be $\theta(0,1,n-3)$ or $\theta(1,1,n-4)$.

{\bf Subase 2.3.1.} If $G$ has a
$\theta$-subdigraph $\theta(0,1,n-3)$. Let $wv$, $wuv$ and $vu_1u_2\ldots u_{n-3}w$ be the basic directed paths of the $\theta$-subdigraph $\theta(0,1,n-3)$.
We consider the possible arc(s) in $G$ except the arcs in $\theta(0,1,n-3)$ as follows.

(1) $(v,w)\notin E(G)$, otherwise, $G$ has a
$\theta$-subdigraph $\theta(0,n-3,0)$, a contradiction.

(2) $(v,u)\notin E(G)$ and $(u,w)\notin E(G)$, otherwise, $G$ has a
$\theta$-subdigraph $\theta(0,n-2,0)$, a contradiction.

(3) $(u,u_k)\notin E(G)$ and $(u_{n-k-2},u)\notin E(G)$ for $2\leq k\leq n-3$, otherwise, $G$ has a
$\theta$-subdigraph $\theta(0,k,n-k-2)$, a contradiction.

(4) $(w,u_k)\notin E(G)$ and $(u_{n-k-2},v)\notin E(G)$ for $1\leq k\leq n-3$, otherwise, $G$ has a
$\theta$-subdigraph $\theta(0,k+1,n-k-3)$, a contradiction.

(5) $(u_k,w)\notin E(G)$ and $(v,u_{n-k-2})\notin E(G)$ for $1\leq k\leq n-4$, otherwise, $G$ has a
$\theta$-subdigraph $\theta(0,1,k)$, a contradiction.

(6) $(u_l,u_k)\notin E(G)$ for $1\leq k<l\leq n-3$, otherwise, $G$ has a
$\theta$-subdigraph $\theta(0,n-l+k-1,l-k-1)$, a contradiction.

(7) $(u_k,u_l)\notin E(G)$ for $1\leq k<l-1\leq n-4$, otherwise, $G$ has a
$\theta$-subdigraph $\theta(0,1,n-2+k-l)$, a contradiction.

(8) $\{(u,u_1),(u_{n-3},u)\}\notin E(G)$, otherwise, $G$ has a
$\theta$-subdigraph $\theta(0,1,n-4)$, a contradiction.

From (1)-(8), we find that besides these arcs in $\theta(0,1,n-3)$, $G$ contains one additional arc
$(u,u_1)$ or $(u_{n-3},u)$. Thus $G$ is isomorphic to the digraph $G'$ obtained from $\theta(0,1,n-3)$ by adding the arcs $(u,u_1)$, as shown
in the Figure \ref{Figure 3.}.
\begin{figure}[htbp]
\begin{centering}
{\includegraphics[scale=0.6]{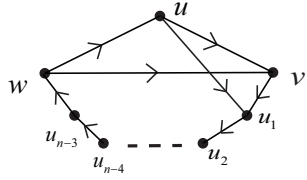}}
\caption{The digraph $G'$.}\label{Figure 3.}
\end{centering}
\end{figure}

Similar to the proofs of Lemmas \ref{le:14} and \ref{le:12}, we have $\lambda_\alpha(G')$ is the largest
real root of $p(x)=(\frac{x-2\alpha}{1-\alpha})^2(\frac{x-\alpha}{1-\alpha})^{n-2}-\frac{2x-3\alpha}{1-\alpha}-1=0$. From the proof of
Lemma \ref{le:12}, we know that $\lambda_\alpha(\theta(0,2,n-4))$ is the largest
real root of $q(x)=\frac{x-2\alpha}{1-\alpha}(\frac{x-\alpha}{1-\alpha})^{n-1}-(\frac{x-\alpha}{1-\alpha})^{2}-1=0$.
Note that $$q(x)-p(x)=\frac{x-2\alpha}{1-\alpha}\left(\frac{x-\alpha}{1-\alpha}\right)^{n-2}\frac{\alpha}{1-\alpha}-\left(\frac{x-\alpha}{1-\alpha}\right)^{2}+\frac{2x-3\alpha}{1-\alpha}.$$
For $0\leq \alpha\leq\frac{1}{2}$,
\begin{align*}
q(x)-p(x)&>\frac{x-2\alpha}{1-\alpha}\left(\frac{x-\alpha}{1-\alpha}\right)\frac{\alpha}{1-\alpha}-\frac{x^2-2x+3\alpha-2\alpha^2}{(1-\alpha)^{2}}, \\
&=\frac{\alpha x^2-3\alpha^2x+2\alpha^3}{(1-\alpha)^{3}}-\frac{(x^2-2x+3\alpha-2\alpha^2)(1-\alpha)}{(1-\alpha)^{3}}\\
&=\frac{(2\alpha-1)x^2+(2-2\alpha-3\alpha^2)x-3\alpha+5\alpha^2}{(1-\alpha)^{3}}.
\end{align*}
Taking $g(x)=(2\alpha-1)x^2+(2-2\alpha-3\alpha^2)x-3\alpha+5\alpha^2$. If $\alpha=\frac{1}{2}$, then $g(x)=\frac{1}{4}(x-1)$. Thus
$g(x)>0$ for all $x>1$. Then $q(x)-p(x)>0$ for all $x>1$. However, by Lemma \ref{le:1}, we
have $\lambda_\alpha(G')>1$, Then, we get $\lambda_\alpha(G)=\lambda_\alpha(G')>\lambda_\alpha(\theta(0,2,n-4))$. If $0\leq \alpha<\frac{1}{2}$,
then $2\alpha-1<0$, and $g(x)''<0$  for $1<x<2$. Hence
$g(x)>\min\{g(1),g(2)\}=\min\{1-3\alpha+2\alpha^2,\alpha-\alpha^2\}\geq0$ for $0\leq \alpha<\frac{1}{2}$. Hence $q(x)-p(x)>0$ for all $1<x<2$.
However, by Lemma \ref{le:1}, we have $1<\lambda_\alpha(G')<2$. Then, we have $\lambda_\alpha(G)=\lambda_\alpha(G')>\lambda_\alpha(\theta(0,2,n-4))$.

{\bf Subase 2.3.2.} If $G$ has a
$\theta$-subdigraph $\theta(1,1,n-4)$. Let $uwv$, $uw_1v$ and $vw_1'w_2'\ldots u_{n-4}'u$ be the basic directed paths of the $\theta$-subdigraph $\theta(1,1,n-4)$.
We consider the possible arc(s) in $G$ except the arcs in $\theta(1,1,n-4)$ as follows.

(1) $(w,u)\notin E(G)$ and $(v,w)\notin E(G)$, otherwise, $G$ has a
$\theta$-subdigraph $\theta(0,n-3,0)$, a contradiction.

(2) $(w_1,u)\notin E(G)$ and $(v,w_1)\notin E(G)$, otherwise, $G$ has a
$\theta$-subdigraph $\theta(0,n-3,0)$, a contradiction.

(3) $(v,u)\notin E(G)$, otherwise, $G$ has a
$\theta$-subdigraph $\theta(0,n-4,1)$, a contradiction.

(4) $(u,v)\notin E(G)$, otherwise, $G$ has a
$\theta$-subdigraph $\theta(0,1,n-4)$, a contradiction.

(5) $(w,w_k')\notin E(G)$ and $(w_{n-k-3}',w)\notin E(G)$ for $1\leq k\leq n-4$, otherwise, $G$ has a
$\theta$-subdigraph $\theta(0,k,n-k-3)$, a contradiction.

(6) $(w_1,w_k')\notin E(G)$ and $(w_{n-k-3}',w_1)\notin E(G)$ for $1\leq k\leq n-4$, otherwise, $G$ has a
$\theta$-subdigraph $\theta(0,k,n-k-3)$, a contradiction.

(7) $(v,w_k')\notin E(G)$ for $2\leq k\leq n-4$, otherwise, $G$ has a
$\theta$-subdigraph $\theta(0,k-1,n-k-2)$, a contradiction.

(8) $(w_k',v)\notin E(G)$ for $1\leq k\leq n-4$, otherwise, $G$ has a
$\theta$-subdigraph $\theta(0,n-k-2,k-1)$, a contradiction.

(9) $(u,w_k')\notin E(G)$ for $1\leq k\leq n-4$, otherwise, $G$ has a
$\theta$-subdigraph $\theta(0,k+1,n-k-4)$, a contradiction.

(10) $(w_k',u)\notin E(G)$ for $1\leq k\leq n-5$, otherwise, $G$ has a
$\theta$-subdigraph $\theta(0,n-k-4, k+1)$, a contradiction.

(11) $(w_l',w_k')\notin E(G)$ for $1\leq k<l\leq n-4$, otherwise, $G$ has a
$\theta$-subdigraph $\theta(0,n-l+k-2,l-k-1)$, a contradiction.

(12) $(w_k',w_l')\notin E(G)$ for $1\leq k<l-1\leq n-5$, otherwise, $G$ has a
$\theta$-subdigraph $\theta(1,1,n-3+k-l)$, a contradiction.

(13) $\{(w,w_1),(w_1,w)\}\notin E(G)$, otherwise, $G$ has a
$\theta$-subdigraph $\theta(0,n-2,0)$, a contradiction.

From (1)-(13), we find that besides these arcs in $\theta(1,1,n-4)$, $G$ only contains one additional arc
$(w,w_1)$ or $(w_1,w)$. Thus $G$ is isomorphic to the digraph $G_1$ or $G_2$, where $G_1$ and $G_2$
as shown in the Figure \ref{Figure 4.}.
\begin{figure}[htbp]
\begin{centering}
{\includegraphics[scale=0.6]{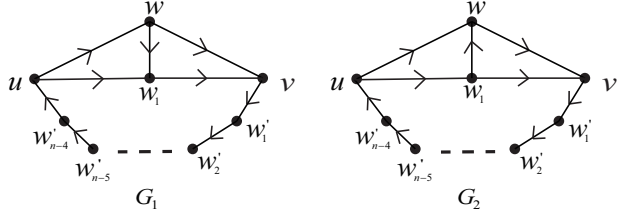}}
\caption{The digraphs $G_1$ and $G_2$.}\label{Figure 4.}
\end{centering}
\end{figure}
If $G$ is isomorphic to the digraph $G_1$, one can easily get that $\lambda_\alpha(G_1)$ is the largest
real root of the equation $(\frac{x-2\alpha}{1-\alpha})^2(\frac{x-\alpha}{1-\alpha})^{n-2}-\frac{2x-3\alpha}{1-\alpha}-1=0$.
From the proof of subcase 2.3.1, we have $\lambda_\alpha(G_1)=\lambda_\alpha(G')>\lambda_\alpha(\theta(0,2,n-4))$. Thus
we have $\lambda_\alpha(G)=\lambda_\alpha(G_1)=\lambda_\alpha(G')>\lambda_\alpha(\theta(0,2,n-4))$.

If $G$ is isomorphic to the digraph $G_2$, note that $G_2$ isomorphic to the digraph $G'$ as shown in subcase 2.3.1. Thus
we have $\lambda_\alpha(G)=\lambda_\alpha(G_2)=\lambda_\alpha(G')>\lambda_\alpha(\theta(0,2,n-4))$.

Combining the above two cases, we have $\lambda_\alpha(G)>\lambda_\alpha(\theta(0,2,n-4))$, if $G$ is a strongly
connected digraph of order $n\geq5$ that is neither a bicyclic digraph nor $C_{n}$
\end{proof}

By Corollary \ref{co:1}, we know that $C_{n}$ is the unique
strongly connected digraph with the minimum $A_\alpha$ spectral radius among all the strongly
connected digraphs of order $n$. Therefore, from Theorems \ref{th:4-3}, \ref{th:c-4} and \ref{th:c-5}, we have the following theorem.

\noindent\begin{theorem}\label{th:c-6} \ Among all the strongly
connected digraphs with order $n\geq5$ and $0\leq \alpha\leq\frac{1}{2}$, $\theta(0,1,n-3)$, $\theta(1,1,n-4)$ and $\theta(0,2,n-4)$ are the digraphs which
achieve the second, the third and the fourth minimum $A_\alpha$ spectral radius, respectively.
\end{theorem}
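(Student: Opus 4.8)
The plan is to treat the statement as a synthesis of the structural results already in hand, organizing all strongly connected digraphs of order $n$ by their arc count. First I would recall from Corollary \ref{co:1} that $C_n$ is the unique strongly connected digraph achieving $\lambda_\alpha=1$, hence the unique minimizer; this isolates $C_n$ and lets me restrict attention to digraphs $G\neq C_n$.

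Next I would split the remaining digraphs into two classes. If $G$ is bicyclic (that is, $|E(G)|=|V(G)|+1$), then $G$ is a $\theta$-digraph or an $\infty$-digraph, so I can appeal directly to the bicyclic ordering: by the $s=2$ case of Theorem \ref{th:4-3} (recorded in Remark \ref{re:4-2}), $\theta(0,1,n-3)$ is the unique minimum among bicyclic digraphs, while Theorem \ref{th:c-4} identifies $\theta(1,1,n-4)$ and $\theta(0,2,n-4)$ as the unique second and third minima. This produces, for every bicyclic $G$ outside the three named digraphs, the strict chain
$$1<\lambda_\alpha(\theta(0,1,n-3))<\lambda_\alpha(\theta(1,1,n-4))<\lambda_\alpha(\theta(0,2,n-4))<\lambda_\alpha(G).$$

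For the complementary class, where $G$ is neither bicyclic nor $C_n$, I would invoke Theorem \ref{th:c-5}, which is precisely where the hypothesis $0\leq\alpha\leq\frac{1}{2}$ enters: it guarantees $\lambda_\alpha(G)>\lambda_\alpha(\theta(0,2,n-4))$ for all such $G$. Combining the two classes, every strongly connected digraph outside the set $\{C_n,\theta(0,1,n-3),\theta(1,1,n-4),\theta(0,2,n-4)\}$ has $A_\alpha$ spectral radius strictly exceeding $\lambda_\alpha(\theta(0,2,n-4))$, and the displayed chain (together with $\lambda_\alpha(C_n)=1$) orders the four exceptional digraphs. Reading off the ranks then yields the second, third and fourth minima as stated.

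I do not anticipate a genuine obstacle here, since all the analytic work — the characteristic-equation comparisons, the arc-shifting argument via Lemma \ref{le:4}, and the $\alpha$-dependent polynomial inequality in Subcase 2.3.1 of Theorem \ref{th:c-5} — has already been discharged in the cited results. The only point demanding care is bookkeeping: I must confirm that the three $\theta$-digraphs are genuinely comparable in the stated strict order, so that no two tie and no heavier digraph intrudes below $\theta(0,2,n-4)$. This is exactly the content of the uniqueness clauses in Theorems \ref{th:4-3} and \ref{th:c-4} for the bicyclic side and of Theorem \ref{th:c-5} for the non-bicyclic side, and I would note that the restriction to $[0,\tfrac{1}{2}]$ is inherited solely through the latter.
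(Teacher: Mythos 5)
Your proposal is correct and follows essentially the same route as the paper: the paper likewise isolates $C_n$ via Corollary \ref{co:1} and then combines Theorems \ref{th:4-3}, \ref{th:c-4} and \ref{th:c-5} to rank the three $\theta$-digraphs, with the restriction $0\leq\alpha\leq\frac{1}{2}$ entering only through Theorem \ref{th:c-5}. Your write-up merely makes explicit the bicyclic/non-bicyclic case split that the paper leaves implicit.
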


\noindent\begin{remark}\label{re:2} If $\alpha=0$, Li and Zhou \cite{LZ} proved that $\theta(0,1,n-3)$, $\theta(1,1,n-4)$ and $\theta(0,2,n-4)$ are the unique digraphs which
achieve the second, the third and the fourth minimum $A_0$ spectral radius among all strongly connected digraphs, respectively.
If $\alpha=\frac{1}{2}$, Hong and You \cite{HoYo} determined that $\theta(0,1,n-3)$, $\theta(1,1,n-4)$ and $\theta(0,2,n-4)$ also attain the second, the third and the fourth minimum $A_{\frac{1}{2}}$ spectral radii among all strongly connected digraphs, respectively.
\end{remark}

For general $0\leq \alpha<1$, we propose the following conjecture based on numerical examples.

\noindent\begin{conjecture} \ Among all the strongly
connected digraphs with order $n\geq5$ and $0\leq \alpha<1$, $\theta(0,1,n-3)$, $\theta(1,1,n-4)$ and $\theta(0,2,n-4)$ are the digraphs which
achieve the second, the third and the fourth minimum $A_\alpha$ spectral radius, respectively.
\end{conjecture}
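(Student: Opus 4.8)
The plan is to obtain the conjecture by extending Theorem~\ref{th:c-5} from $0\le\alpha\le\frac12$ to all $0\le\alpha<1$; granting this, Corollary~\ref{co:1} together with Theorems~\ref{th:4-3} and~\ref{th:c-4} yields the stated ordering exactly as in the derivation of Theorem~\ref{th:c-6}. The key observation is that, in the proof of Theorem~\ref{th:c-5}, every step other than Subcase~2.3.1 already holds for all $\alpha\in[0,1)$: Cases~1, 2.1 and~2.2 invoke only Lemmas~\ref{le:9},~\ref{le:11} and Theorems~\ref{th:4-2},~\ref{th:4-1},~\ref{th:c-4}, none of which restricts $\alpha$, while Subcase~2.3.2 reduces to Subcase~2.3.1 via $\lambda_\alpha(G_1)=\lambda_\alpha(G_2)=\lambda_\alpha(G')$. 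Hence the whole problem collapses to the single comparison $\lambda_\alpha(G')>\lambda_\alpha(\theta(0,2,n-4))$ for $\frac12<\alpha<1$, where $G'$ is the digraph of Subcase~2.3.1.

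For this comparison I would avoid the elementary estimate $\left(\frac{x-\alpha}{1-\alpha}\right)^{n-2}\ge\frac{x-\alpha}{1-\alpha}$ used when $\alpha\le\frac12$: it is valid only while the coefficient $\frac{x-2\alpha}{1-\alpha}$ is nonnegative, and for $\alpha>\frac12$ this coefficient changes sign inside the localization window $(1,2)$, which is exactly what wrecks the quadratic bound $g(x)$. Instead, writing $\lambda:=\lambda_\alpha(G')$, $a=\frac{\lambda-2\alpha}{1-\alpha}$, $b=\frac{\lambda-\alpha}{1-\alpha}$ and $c=\frac{\alpha}{1-\alpha}$ (so that $b-a=c$), I would use the defining identity $p(\lambda)=0$, i.e. $a^{2}b^{\,n-2}=a+b+1$, to eliminate the $n$-dependent power from $q(\lambda)$. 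A short computation then yields the $n$-free expression
\[
q(\lambda)=\frac{1}{a}\Bigl(-b^{3}+(2+c)b^{2}-cb+c\Bigr).
\]
After verifying $\lambda>2\alpha$ (hence $a>0$) from $p(2\alpha)<0$, $q(2\alpha)<0$ and the positivity of the leading terms, the inequality $\lambda_\alpha(G')>\lambda_\alpha(\theta(0,2,n-4))$ reduces --- through the fact that $q$ is unimodal with a single root in $(2\alpha,2)$ and is negative to its left --- to the cubic inequality $N(b):=-b^{3}+(2+c)b^{2}-cb+c>0$.

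It then remains to localize $b$. One checks $N=c(c+1)>0$ at $b=c$ (the value $\lambda=2\alpha$) and $N=-c(c+1)<0$ at $b=2+c$ (the value $\lambda=2$), and moreover $N(c+1)=2c+1>0$; thus $N>0$ throughout $(c,c+1]$, so it suffices to prove the a~priori bound $\lambda_\alpha(G')\le 1+\alpha$, equivalently $b\le c+1$. I would obtain this in two steps: first, $\lambda_\alpha(G'_{n})$ is non-increasing in $n$, since the order-$(n+1)$ member of the family is an arc-subdivision of the order-$n$ member and Lemma~\ref{le:11} applies, reducing everything to the base case $n=5$; second, at $n=5$ the equation is $a^{2}b^{3}=a+b+1$, and evaluating $p$ at $x=1+\alpha$ (where $a=1$, $b=1+c$) gives $p(1+\alpha)=(1+c)^{3}-(3+c)>0$ for all $c\ge1$, i.e. for all $\alpha\ge\frac12$, which forces $\lambda_\alpha(G'_{5})<1+\alpha$.

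The step I expect to be the main obstacle is precisely this uniform-in-$\alpha$ localization for $\frac12<\alpha<1$. For $\alpha\le\frac12$ the comparison is painless because $\frac{x-2\alpha}{1-\alpha}>0$ on all of $(1,2)$ and the whole argument collapses to the quadratic $g$; once $\alpha>\frac12$ the sign change of this coefficient removes the monotone-comparison shortcut, so one is forced into the $n$-elimination above and into proving the sharp bound $\lambda_\alpha(G')<1+\alpha$ together with the unimodality statements for $p$ and $q$ that make the cubic reduction rigorous. Carrying out these verifications cleanly and uniformly over $\alpha\in(\tfrac12,1)$ is, I believe, the technical heart of the conjecture and the reason it was left open.
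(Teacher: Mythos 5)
The statement you are addressing is left as a \emph{conjecture} in the paper: the authors prove it only for $0\le\alpha\le\frac12$ (Theorem \ref{th:c-6}) and offer no argument for $\frac12<\alpha<1$, so there is no proof in the paper to compare yours against. Judged on its own terms, your reduction is sound. You are right that the only $\alpha\le\frac12$-dependent step in the proof of Theorem \ref{th:c-5} is the comparison $\lambda_\alpha(G')>\lambda_\alpha(\theta(0,2,n-4))$ of Subcase 2.3.1 (all other cases use only $\alpha$-free lemmas, and Subcase 2.3.2 feeds back into 2.3.1), and that the rest of the derivation of Theorem \ref{th:c-6} from Corollary \ref{co:1} and Theorems \ref{th:4-3}, \ref{th:c-4} carries over verbatim. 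Your algebra also checks out: with $a=\frac{\lambda-2\alpha}{1-\alpha}$, $b=\frac{\lambda-\alpha}{1-\alpha}$, $c=\frac{\alpha}{1-\alpha}$, the relation $p(\lambda)=0$ reads $a^2b^{n-2}=a+b+1$, substitution into $q(\lambda)=ab^{n-1}-b^2-1$ gives $q(\lambda)=\frac1a\bigl(-b^3+(2+c)b^2-cb+c\bigr)$, the endpoint values of $N$ are as you state, and $p(1+\alpha)=(1+c)^{n-2}-(3+c)>0$ for $c\ge1$ and $n\ge5$ (so the detour through Lemma \ref{le:11} and the base case $n=5$ is not even necessary).

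Two steps are asserted rather than proved, and both must be supplied, but both are fillable. First, ``thus $N>0$ throughout $(c,c+1]$'' does not follow from three sample values of a cubic; however, writing $N(b)=b^2(2+c-b)-c(b-1)$ one sees that for $c<b<c+1$ one has $2+c-b>1$ and $b-1<c$, hence $N(b)>b^2-c^2>0$, which closes this gap for every $c>0$. Second, the passages from sign evaluations to root comparisons ($p(1+\alpha)>0\Rightarrow\lambda_\alpha(G')<1+\alpha$, and $q(\lambda_\alpha(G'))>0\Rightarrow\lambda_\alpha(G')>\lambda_\alpha(\theta(0,2,n-4))$) require that $p$ and $q$ each have exactly one root in $(2\alpha,\infty)$. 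This holds because $p(2\alpha)=-(c+1)<0$ and $q(2\alpha)=-(c^2+1)<0$, both functions tend to $+\infty$, and both are convex on $(2\alpha,\infty)$: $p''>0$ wherever $a,b>0$, while $q''=\frac{1}{(1-\alpha)^2}\bigl(2(n-1)b^{n-2}+(n-1)(n-2)ab^{n-3}-2\bigr)>0$ once $b>1$, which is automatic on $(2\alpha,\infty)$ when $\alpha>\frac12$ since there $b>c>1$. With these two points written out, your outline does settle the conjecture for $\frac12<\alpha<1$, and together with the paper's Theorem \ref{th:c-6} it would upgrade the conjecture to a theorem; I would encourage you to write it up in full.
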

\section{ The $A_\alpha$ spectral radius of strongly connected bipartite digraphs which contain a complete bipartite subdigraph}

Let $\overleftrightarrow{K_{p,q}}$ be a complete bipartite digraph with
$V(\overleftrightarrow{K_{p,q}})=V_p \cup \ V_q$ and $|V_p|=p$, $|V_q|=q$.
Let $\mathcal{G}_{n,p,q}$ denote the set of strongly connected
bipartite digraphs on $n$ vertices which contain a complete bipartite subdigraph $\overleftrightarrow{K_{p,q}}$.
As we all know, if $p+q=n$, then $\mathcal{G}_{n,p,q}=\{\overleftrightarrow{K_{p,q}}\}$. It is easy to know that
$\lambda_\alpha(\overleftrightarrow{K_{p,q}})=\frac{\alpha(p+q)+\sqrt{(\alpha(p+q))^2-8\alpha pq+4pq}}{2}$.
Thus we only consider the cases when $p+q\leq n-1$ and $p\geq q\geq2$. In the rest of this section, we just discuss under this assumption.

Chen et al. \cite{CCL} proved that if $n\equiv p+q \ (mod \ 2)$ then $B^5_{n,p,q}$ or $B^6_{n,p,q}$
 is the unique bipartite digraph with the minimum $A_0$ spectral radius among all digraphs in $\mathcal{G}_{n,p,q}$,
otherwise, if $n\not\equiv p+q \ (mod \ 2)$ then $B^1_{n,p,q}$ is the unique bipartite digraph with
the minimum $A_0$ spectral radius among all digraphs in $\mathcal{G}_{n,p,q}$. We generalize their results to $0\leq\alpha<1$.

Let $B^1_{n,p,q}$ be a digraph obtained by adding a directed path
$P_{n-p-q+2}=v_1v_{p+q+1}v_{p+q+2}\ldots$ $v_{n}v_{p}$ to a complete bipartite digraph
$\overleftrightarrow{K_{p,q}}$ such that $V(\overleftrightarrow{K_{p,q}}) \cap V(P_{n-p-q+2})=\{v_1,v_p\}$
as shown in Figure \ref{Fig.1}(a), where $V(B^1_{n,p,q})=\{v_1,v_2,\ldots,v_n\}$. Clearly, if $n-p-q$ is odd,
then $B^1_{n,p,q}\in \mathcal{G}_{n,p,q}$.

Let $B^2_{n,p,q}$ be a digraph obtained by adding a directed path
$P_{n-p-q+2}=v_{p+1}v_{p+q+1}v_{p+q+2}$ $\ldots v_{n}v_{p+q}$ to a complete bipartite digraph
$\overleftrightarrow{K_{p,q}}$ such that $V(\overleftrightarrow{K_{p,q}}) \cap V(P_{n-p-q+2})=\{v_{p+1},$ $v_{p+q}\}$
as shown in Figure \ref{Fig.1}(b), where $V(B^2_{n,p,q})=\{v_1,v_2,$ $\ldots,v_n\}$. Clearly, if $n-p-q$ is odd,
then $B^2_{n,p,q}\in \mathcal{G}_{n,p,q}$.

\begin{figure}[htbp]
\centering
\includegraphics[scale=0.5]{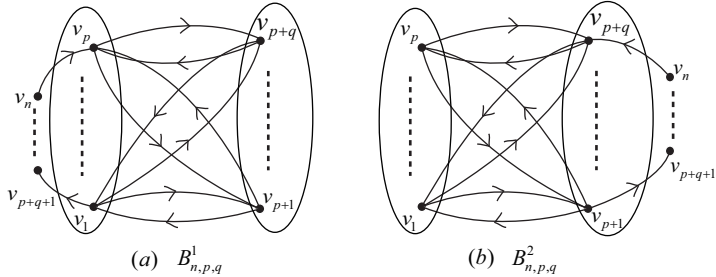}
\caption{$B^1_{n,p,q}$ and $B^2_{n,p,q}$}\label{Fig.1}
\end{figure}

Let $B^5_{n,p,q}$ be a digraph obtained by adding a directed path
$P_{n-p-q+2}=v_1v_{p+q+1}v_{p+q+2}$ $\ldots v_{n}v_{p+1}$ to a complete bipartite digraph
$\overleftrightarrow{K_{p,q}}$ such that $V(\overleftrightarrow{K_{p,q}}) \cap V(P_{n-p-q+2})=\{v_1,v_{p+1}\}$
as shown in Figure \ref{Fig.2}(a), where $V(B^5_{n,p,q})=\{v_1,v_2,\ldots,$ $v_n\}$. Clearly, if $n-p-q$ is even, then $B^5_{n,p,q}\in \mathcal{G}_{n,p,q}$.

Let $B^6_{n,p,q}$ be a digraph obtained by adding a directed path
$P_{n-p-q+2}=v_{p+1}v_{p+q+1}v_{p+q+2}$ $\ldots v_{n}v_1$ to a complete bipartite digraph
$\overleftrightarrow{K_{p,q}}$ such that $V(\overleftrightarrow{K_{p,q}}) \cap V(P_{n-p-q+2})=\{v_1,v_{p+1}\}$
as shown in Figure \ref{Fig.2}(b), where $V(B^6_{n,p,q})=\{v_1,v_2,\ldots,$ $v_n\}$. Clearly, if $n-p-q$ is even, then $B^6_{n,p,q}\in \mathcal{G}_{n,p,q}$.

\begin{figure}[htbp]
\centering
\includegraphics[scale=0.5]{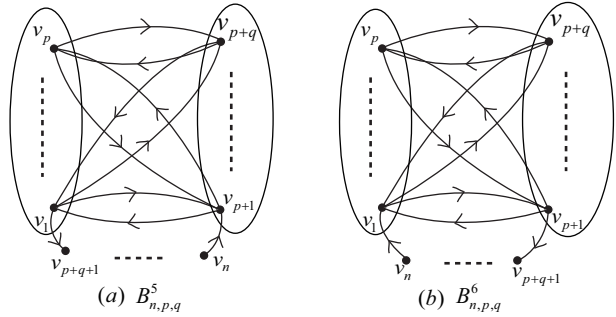}
\caption{$B^5_{n,p,q}$ and $B^6_{n,p,q}$}\label{Fig.2}
\end{figure}

\noindent\begin{lemma}\label{le:3} (\cite{LWCL}) Let $0\leq \alpha<1$ and $G$ be a strongly connected digraph. Then
$\lambda_\alpha(G)> \alpha\Delta^+$, where $\Delta^+$ denotes the maximum outdegree of $G$.
\end{lemma}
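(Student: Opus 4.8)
The plan is to exploit the Perron--Frobenius structure of $A_\alpha(G)$ together with the defining eigenvalue equation read off at the vertex of maximum outdegree. Since $G$ is strongly connected, $A_\alpha(G)=\alpha D(G)+(1-\alpha)A(G)$ is a nonnegative irreducible matrix, so by the Perron--Frobenius Theorem $\lambda_\alpha(G)$ is an eigenvalue admitting a strictly positive (Perron) eigenvector $X=(x_1,x_2,\ldots,x_n)^T$. First I would fix a vertex $v_k$ attaining the maximum outdegree, so that $d_k^+=\Delta^+$.

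Next I would write out the $k$-th coordinate of the relation $A_\alpha(G)X=\lambda_\alpha(G)X$. Because the $k$-th diagonal entry of $D(G)$ equals $d_k^+=\Delta^+$, this reads
$$\lambda_\alpha(G)\,x_k=\alpha\Delta^+ x_k+(1-\alpha)\sum_{v_j\in N_k^+}x_j.$$
I would then observe that strong connectivity forces every outdegree to be at least $1$, so $N_k^+\neq\emptyset$; since every $x_j>0$, the sum $\sum_{v_j\in N_k^+}x_j$ is strictly positive. Together with $1-\alpha>0$ (which holds as $\alpha<1$), the second term on the right is strictly positive, whence $\lambda_\alpha(G)\,x_k>\alpha\Delta^+ x_k$. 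Dividing through by $x_k>0$ yields $\lambda_\alpha(G)>\alpha\Delta^+$, as claimed.

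There is essentially no deep obstacle here; the one point requiring care is arguing that the adjacency contribution $(1-\alpha)\sum_{v_j\in N_k^+}x_j$ is genuinely \emph{positive} rather than merely nonnegative. This is exactly where the two hypotheses enter: strong connectivity guarantees that $v_k$ has at least one out-neighbour (so the index set is nonempty), and the strict positivity of the Perron vector guarantees each summand is positive, while the assumption $\alpha<1$ prevents the coefficient $1-\alpha$ from vanishing and collapsing the strict inequality to an equality.
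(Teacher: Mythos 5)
Your proof is correct. The paper does not actually prove this lemma --- it is quoted from Liu et al.\ \cite{LWCL} without proof --- but your argument (evaluating $A_\alpha(G)X=\lambda_\alpha(G)X$ at a vertex of maximum outdegree and using the strict positivity of the Perron vector, the nonemptiness of the out-neighbourhood forced by strong connectivity, and $1-\alpha>0$) is the standard derivation and is exactly what one would expect the cited source to do.
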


\noindent\begin{lemma}\label{le:a}  Let $G$ be a strongly connected digraph containing two vertices $v_i, v_j$ such that $(v_i,v_j)\notin E(G)$
and $(v_j,v_i)\notin E(G)$, and let $X$ be the Perron vector of $A_\alpha(G)$, where $x_i$ corresponds to the vertex $v_i$.
If $N^+_i\subseteq N^+_j$, then $x_j\geq x_i$.
Moreover, if $N^+_i\subsetneq N^+_j$, then $x_j>x_i$, if $N^+_i=N^+_j$, then $x_j=x_i$.
\end{lemma}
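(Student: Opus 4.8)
The plan is to compare the two coordinate equations of the eigenvalue equation $A_\alpha(G)X=\lambda_\alpha(G)X$ at the vertices $v_i$ and $v_j$. Writing $\lambda=\lambda_\alpha(G)$ and using $A_\alpha(G)=\alpha D(G)+(1-\alpha)A(G)$, the row indexed by $v_i$ reads
\[
(\lambda-\alpha d_i^+)\,x_i=(1-\alpha)\sum_{v_k\in N^+_i}x_k ,
\]
and the row indexed by $v_j$ reads analogously with $i$ replaced by $j$. The first thing I would record is that both coefficients $\lambda-\alpha d_i^+$ and $\lambda-\alpha d_j^+$ are strictly positive: by Lemma \ref{le:3}, $\lambda>\alpha\Delta^+\geq\alpha d_k^+$ for every vertex $v_k$. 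This is the one genuinely external ingredient, and it is precisely what will allow me to divide by these coefficients at the end. Note also that the Perron--Frobenius theorem guarantees that every entry of $X$ is positive.

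Next I would exploit the hypothesis $N^+_i\subseteq N^+_j$. Writing $N^+_j=N^+_i\cup S$ as a disjoint union with $S=N^+_j\setminus N^+_i$, we get $d_j^+=d_i^+ + |S|$ and $\sum_{v_k\in N^+_j}x_k=\sum_{v_k\in N^+_i}x_k+\sum_{v_k\in S}x_k$. Setting $a=\lambda-\alpha d_i^+$ and $b=\lambda-\alpha d_j^+$, the $v_i$-equation says $a\,x_i=(1-\alpha)\sum_{v_k\in N^+_i}x_k$, and substituting this into the $v_j$-equation yields
\[
b\,x_j=a\,x_i+(1-\alpha)\sum_{v_k\in S}x_k .
\]
Since $d_i^+\leq d_j^+$ we have $a\geq b>0$, and since all $x_k>0$ the last sum is nonnegative; hence $b\,x_j\geq a\,x_i\geq b\,x_i$, and dividing by $b>0$ gives $x_j\geq x_i$.

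Finally I would read off the two refinements from the same identity. If $N^+_i\subsetneq N^+_j$ then $S\neq\emptyset$, so $\sum_{v_k\in S}x_k>0$ and the identity forces $b\,x_j>a\,x_i\geq b\,x_i$, whence $x_j>x_i$; if $N^+_i=N^+_j$ then $S=\emptyset$ and $a=b$, so the identity collapses to $b\,x_j=b\,x_i$ and therefore $x_j=x_i$. I do not expect a real obstacle here: once the eigenequations are written down the argument is a one-line comparison. The only point requiring care is the positivity of $b=\lambda-\alpha d_j^+$ needed to justify the division, which is exactly what Lemma \ref{le:3} supplies. The non-adjacency hypotheses $(v_i,v_j)\notin E(G)$ and $(v_j,v_i)\notin E(G)$ serve only as consistency conditions on the neighbourhoods (indeed $(v_i,v_j)\notin E(G)$ is already forced by $N^+_i\subseteq N^+_j$ together with the absence of loops), and the computation above goes through unchanged.
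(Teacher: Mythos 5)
Your proposal is correct and follows essentially the same route as the paper: both write the eigenequations at $v_i$ and $v_j$, compare $(\lambda_\alpha(G)-\alpha d_j^+)x_j$ with $(\lambda_\alpha(G)-\alpha d_i^+)x_i$ using $N^+_i\subseteq N^+_j$ and the positivity of $X$, and invoke Lemma \ref{le:3} to get $\lambda_\alpha(G)>\alpha\Delta^+$ so the coefficients are positive and the comparison can be divided out. Your explicit decomposition $N^+_j=N^+_i\cup S$ merely makes the paper's terser inequality chain more transparent.
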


\begin{proof} From $A_\alpha(G)X=\lambda_\alpha(G)X$, we have

$$\lambda_\alpha(G) x_i=\alpha d^+_ix_i+(1-\alpha)\sum\limits_{v_k\in N^+_i}x_k,$$
$$\lambda_\alpha(G) x_j=\alpha d^+_jx_j+(1-\alpha)\sum\limits_{v_k\in N^+_j}x_k,$$
Since $(v_i,v_j)\notin E(G)$ and $(v_j,v_i)\notin E(G)$, and $N^+_i\subseteq N^+_j$, we have $d^+_i\leq d^+_j$.
Furthermore, we get $(\lambda_\alpha(G)-\alpha d^+_j)x_j\geq(\lambda_\alpha(G)-\alpha d^+_i)x_i$. By Lemma \ref{le:3},
$\lambda_\alpha(G)> \alpha\Delta^+$. So $x_j\geq x_i$.

Since $v_j\notin N^+_i$, $v_i\notin N^+_j$, if $N^+_i\subsetneq N^+_j$, then $d^+_i< d^+_j$ and
$(\lambda_\alpha(G)-\alpha d^+_j)x_j>(\lambda_\alpha(G)-\alpha d^+_i)x_i$, which implies $x_j> x_i$, and
if $N^+_i= N^+_j$, then $d^+_i= d^+_j$ and $(\lambda_\alpha(G)-\alpha d^+_j)x_j= (\lambda_\alpha(G)-\alpha d^+_i)x_i$, which implies $x_i=x_j$.
\end{proof}

\noindent\begin{theorem}\label{th:ch-1} For digraphs $B^1_{n,p,q}$ and $B^2_{n,p,q}$, as shown in Figure \ref{Fig.1},
$$\lambda_\alpha(B^2_{n,p,q})\geq\lambda_\alpha(B^1_{n,p,q}),$$ with equality if and only if $p=q$.
\end{theorem}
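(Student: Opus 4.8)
The plan is to reduce both spectral radii to the largest real roots of two explicit characteristic functions and then compare those functions directly, in the spirit of Lemmas~\ref{le:14} and~\ref{le:12}. First I would write down the Perron vector of each digraph and exploit the symmetry of the complete bipartite part $\overleftrightarrow{K_{p,q}}$: since the out-neighbourhood of every vertex on one side of $\overleftrightarrow{K_{p,q}}$ is exactly the opposite side, all vertices on a given side that carry the same out-structure share a common Perron value. In $B^1_{n,p,q}$ the entry vertex has the same out-structure as a generic $V_p$-vertex, so only the exit vertex (of out-degree $q+1$) is distinguished, and every $V_q$-vertex gets one common value; likewise for $B^2_{n,p,q}$ with the roles of the sides reversed. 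This collapses $A_\alpha X=\lambda_\alpha X$ to a handful of scalar equations. Setting $t=\frac{x-\alpha}{1-\alpha}$, $m=n-p-q$, $\gamma=x-\alpha q$ and $\eta=x-\alpha p$, and eliminating all but one unknown, I expect to obtain that $\lambda_\alpha(B^1_{n,p,q})$ is the largest real root of some $F(x)=0$ and $\lambda_\alpha(B^2_{n,p,q})$ the largest real root of $G(x)=0$, where $G$ is $F$ with $p$ and $q$ interchanged (reflecting that $B^1$ attaches the path to the $p$-side and $B^2$ to the $q$-side).

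Next, each characteristic function has positive leading term (degree $3$ in $x$), tends to $+\infty$, and has $\lambda_\alpha$ as its largest real root, so it suffices to prove $G(\lambda_\alpha(B^1_{n,p,q}))\le 0$; since $F(\lambda_\alpha(B^1_{n,p,q}))=0$, this is equivalent to $(F-G)(\lambda_\alpha(B^1_{n,p,q}))\ge 0$. A direct simplification of the difference should collapse it to the clean form
\[
F(x)-G(x)=(p-q)\Big[\alpha\gamma\eta+(1-\alpha)^3 t^{-m}-\alpha(1-\alpha)^2(pq-1)\Big],
\]
so that, writing $H(x)$ for the bracket, everything reduces to showing $H\big(\lambda_\alpha(B^1_{n,p,q})\big)>0$, with $F\equiv G$ (hence equality) precisely when $p=q$.

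The only nontrivial point, and the main obstacle, is controlling the negative term $-\alpha(1-\alpha)^2(pq-1)$ in $H$, which can be large. The key observation that resolves it is the eigenvalue identity for the complete bipartite digraph: the same elimination applied to $\overleftrightarrow{K_{p,q}}$ shows that its $A_\alpha$ spectral radius satisfies $\gamma\eta=(x-\alpha q)(x-\alpha p)=(1-\alpha)^2 pq$. Since $\lambda_\alpha(\overleftrightarrow{K_{p,q}})>\alpha p$ by Lemma~\ref{le:3} and $x\mapsto(x-\alpha q)(x-\alpha p)$ is increasing for $x>\alpha p$, every $x>\lambda_\alpha(\overleftrightarrow{K_{p,q}})$ satisfies $\gamma\eta>(1-\alpha)^2 pq$, whence $\alpha\gamma\eta-\alpha(1-\alpha)^2(pq-1)>\alpha(1-\alpha)^2>0$ and therefore $H(x)>0$. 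Because $B^1_{n,p,q}$ properly contains $\overleftrightarrow{K_{p,q}}$, Lemma~\ref{le:9} gives $\lambda_\alpha(B^1_{n,p,q})>\lambda_\alpha(\overleftrightarrow{K_{p,q}})$, so $H\big(\lambda_\alpha(B^1_{n,p,q})\big)>0$. For $p>q$ this forces $G\big(\lambda_\alpha(B^1_{n,p,q})\big)<0$ and hence $\lambda_\alpha(B^2_{n,p,q})>\lambda_\alpha(B^1_{n,p,q})$, while for $p=q$ the two characteristic functions coincide and equality holds. The bookkeeping in the elimination and in simplifying $F-G$ is the most error-prone part, but it is routine; the real content of the argument is the cancellation supplied by the $\overleftrightarrow{K_{p,q}}$ eigenvalue identity.
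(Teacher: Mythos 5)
Your proposal follows essentially the same route as the paper's proof: the same Perron-vector elimination to characteristic functions that are $p\leftrightarrow q$ swaps of each other, the same simplification of the difference (your $F-G$ is exactly the paper's $f-g$ divided by $t^{m}$), and the same use of the $\overleftrightarrow{K_{p,q}}$ characteristic equation $x^2-\alpha(p+q)x-pq+2\alpha pq=0$ together with Lemma \ref{le:9} to control the sign beyond $\lambda_\alpha(\overleftrightarrow{K_{p,q}})$. The only nit is that at $\alpha=0$ your chain $\alpha\gamma\eta-\alpha(1-\alpha)^2(pq-1)>\alpha(1-\alpha)^2>0$ degenerates to $0>0$, but $H(x)>0$ still holds because of the strictly positive term $(1-\alpha)^3t^{-m}$, so nothing breaks.
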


\begin{proof} If $p=q$, then $B^2_{n,p,q}\cong B^1_{n,p,q}$. Hence $\lambda_\alpha(B^1_{n,p,q})=\lambda_\alpha(B^2_{n,p,q})$. Otherwise $p>q$, let $G=B^1_{n,p,q}$ and
$X=(x_1,x_2,\ldots,x_n)^T$ be the
Perron vector corresponding to $\lambda_\alpha(G)$ where $x_i$ corresponds to the vertex $v_i$.
By Lemma \ref{le:a}, $x_2=x_3=\ldots=x_p\triangleq x_p$ and $x_{p+1}=x_{p+2}=\ldots=x_{p+q}\triangleq x_{p+1}$.
From $A_\alpha(G)X=\lambda_\alpha(G)X$, we have
$$\begin{cases}
\ \lambda_\alpha(G)x_1=\alpha(q+1)x_1+(1-\alpha)x_{p+q+1}+(1-\alpha)qx_{p+1},\\
\ \lambda_\alpha(G)x_p=\alpha qx_p+(1-\alpha)qx_{p+1},\\
\ \lambda_\alpha(G)x_{p+1}=\alpha px_{p+1}+(1-\alpha)x_1+(1-\alpha)(p-1)x_p,\\
\ \lambda_\alpha(G)x_i=\alpha x_i+(1-\alpha)x_{i+1},&\textrm{$i=p+q+1,\ldots,n-1$,}\\
\ \lambda_\alpha(G)x_n=\alpha x_n+(1-\alpha)x_p.
\end{cases}$$
Then
$$x_n=\left(\frac{\lambda_\alpha(G)-\alpha}{1-\alpha}\right)^{n-p-q-1}x_{p+q+1},$$
$$x_p=\left(\frac{\lambda_\alpha(G)-\alpha}{1-\alpha}\right)x_n=\left(\frac{\lambda_\alpha(G)-\alpha}{1-\alpha}\right)^{n-p-q}x_{p+q+1}.$$
Therefore,
\begin{align*}
(\lambda_\alpha(G)-\alpha q)(\lambda_\alpha(G)-\alpha p)x_p&=(1-\alpha)\cdot q\cdot(\lambda_\alpha(G)-\alpha p)x_{p+1}\\
&=(1-\alpha)^2\cdot qx_1+(1-\alpha)^2\cdot q\cdot (p-1)x_p.
\end{align*}
Furthermore
\begin{align*}
&(\lambda_\alpha(G)-\alpha q)\cdot(\lambda_\alpha(G)-\alpha p)\cdot(\lambda_\alpha(G)-\alpha (q+1))x_p\\
&=(1-\alpha)^2\cdot q\cdot (\lambda_\alpha(G)-\alpha (q+1))x_1+(1-\alpha)^2\cdot q\cdot(p-1)(\lambda_\alpha(G)-\alpha (q+1))x_p\\
&=(1-\alpha)^2\cdot q\cdot((1-\alpha)x_{p+q+1}+(1-\alpha)\cdot qx_{p+1})+(1-\alpha)^2\cdot q\cdot(p-1)(\lambda_\alpha(G)-\alpha (q+1))x_p\\
&=(1-\alpha)^3\cdot q\cdot\frac{1}{\left(\frac{\lambda_\alpha(G)-\alpha}{1-\alpha}\right)^{n-p-q}}x_p+(1-\alpha)^2\cdot q\cdot (\lambda_\alpha(G)-\alpha q)x_p\\
&\ \ \ \ \ \ +(1-\alpha)^2\cdot q\cdot(p-1)\cdot(\lambda_\alpha(G)-\alpha (q+1))x_p.
\end{align*}
Note that $x_p>0$. Hence
\begin{align*}
&\left(\frac{\lambda_\alpha(G)-\alpha}{1-\alpha}\right)^{n-p-q}[(\lambda_\alpha(G)-\alpha q)\cdot(\lambda_\alpha(G)-\alpha p)\cdot(\lambda_\alpha(G)-\alpha (q+1))\\
&\ \ \ \ -(1-\alpha)^2\cdot q\cdot (\lambda_\alpha(G)-\alpha q)-(1-\alpha)^2\cdot q\cdot(p-1)(\lambda_\alpha(G)-\alpha (q+1))]-(1-\alpha)^3 q=0.
\end{align*}

Let $f(x)=\left(\frac{x-\alpha}{1-\alpha}\right)^{n-p-q}[x^3-(\alpha p+2\alpha q+\alpha)x^2+(\alpha^2 q^2+\alpha^2pq+2\alpha pq+\alpha^2q+\alpha^2p-pq)x
-2\alpha^2q^2p-2\alpha^2pq+\alpha q^2p+\alpha pq+2\alpha^2q-\alpha^3q-\alpha q]-(1-\alpha)^3q$.
It is not difficult to see that $\lambda_\alpha(B^1_{n,p,q})$ is the largest
real root of $f(x)=0$. Similarly,
let $g(x)=\left(\frac{x-\alpha}{1-\alpha}\right)^{n-p-q}[x^3-(\alpha q+2\alpha p+\alpha)x^2+(\alpha^2 p^2+\alpha^2pq+2\alpha pq+\alpha^2p+\alpha^2q-pq)x
-2\alpha^2p^2q-2\alpha^2pq+\alpha p^2q+\alpha pq+2\alpha^2p-\alpha^3p-\alpha p]-(1-\alpha)^3p$, then
$\lambda_\alpha(B^2_{n,p,q})$ is the largest real root of $g(x)=0$. Thus
\begin{align*}
f(x)-g(x)&=\left(\frac{x-\alpha}{1-\alpha}\right)^{n-p-q}[\alpha(p-q)x^2-\alpha^2(p-q)(p+q)x+\alpha(p-q)\\
&\ \ \ \ -\alpha pq(p-q)+2\alpha^2pq(p-q)-2\alpha^2(p-q)+\alpha^3(p-q)]+(1-\alpha)^3(p-q)\\
&=\left(\frac{x-\alpha}{1-\alpha}\right)^{n-p-q}\cdot\alpha\cdot(p-q)\cdot[x^2-\alpha(p+q)x+1-pq+2\alpha pq-2\alpha+\alpha^2]\\
&\ \ \ \  +(1-\alpha)^3(p-q).
\end{align*}
Since $\lambda_\alpha(\overleftrightarrow{K_{p,q}})$ is the the largest real root of the equation $x^2-\alpha(p+q)x-pq+2\alpha pq=0$,
$x^2-\alpha(p+q)x-pq+2\alpha pq>0$ for all $x>\lambda_\alpha(\overleftrightarrow{K_{p,q}})$.
Thus $x^2-\alpha(p+q)x+1-pq+2\alpha pq-2\alpha+\alpha^2=x^2-\alpha(p+q)x-pq+2\alpha pq+(1-\alpha)^2>0$ for all $x>\lambda_\alpha(\overleftrightarrow{K_{p,q}})$.
Since $p>q$, $f(x)-g(x)>0$ for all $x>\lambda_\alpha(\overleftrightarrow{K_{p,q}})>1$. By Lemma \ref{le:9}, we have
$\lambda_\alpha(B^2_{n,p,q})>\lambda_\alpha(\overleftrightarrow{K_{p,q}})>1$. Hence
$f(x)-g(x)>0$ for all $x\geq\lambda_\alpha(B^2_{n,p,q})$. Then $\lambda_\alpha(B^2_{n,p,q})>\lambda_\alpha(B^1_{n,p,q})$.

Therefore, $ \lambda_\alpha(B^2_{n,p,q})\geq\lambda_\alpha(B^1_{n,p,q})$ with equality if and only if $p=q$.
\end{proof}

\noindent\begin{theorem}\label{th:ch-2} For digraphs $B^5_{n,p,q}$ and $B^6_{n,p,q}$, as shown in Figure \ref{Fig.2},
$$\lambda_\alpha(B^6_{n,p,q})\geq\lambda_\alpha(B^5_{n,p,q}),$$ with equality if and only if $p=q$ or $\alpha=0$.
\end{theorem}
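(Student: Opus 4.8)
The plan is to follow the template of Theorem~\ref{th:ch-1}, comparing the two digraphs through their characteristic equations, and to exploit an extra symmetry that is special to $B^5$ and $B^6$. First I would take the Perron vector $X=(x_1,\dots,x_n)^T$ of each digraph and use Lemma~\ref{le:a} to collapse repeated coordinates. In $B^5_{n,p,q}$ the vertices $v_2,\dots,v_p$ all have out-neighbourhood $V_q$ and the vertices $v_{p+1},\dots,v_{p+q}$ all have out-neighbourhood $V_p$, so $x_2=\cdots=x_p$ and $x_{p+1}=\cdots=x_{p+q}$, while the path-initial vertex $v_1\in V_p$ is the sole distinguished vertex (out-degree $q+1$). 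In $B^6_{n,p,q}$ the roles are interchanged: the distinguished vertex is $v_{p+1}\in V_q$ with out-degree $p+1$, and $x_1=\cdots=x_p$, $x_{p+2}=\cdots=x_{p+q}$. Each eigenvalue system then reduces to three distinct unknowns together with the chain of coordinates along the added path.

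Next I would write $A_\alpha X=\lambda_\alpha X$ coordinate-wise. Along the path the recursion $\lambda_\alpha y_k=\alpha y_k+(1-\alpha)y_{k+1}$ yields $y_1=\left(\frac{\lambda_\alpha-\alpha}{1-\alpha}\right)^{-(n-p-q)}z$, where $z$ is the terminal coordinate of the path (the common $V_q$-value for $B^5$, the common $V_p$-value for $B^6$), exactly as in Lemma~\ref{le:14}. Eliminating the path and the repeated coordinates, I obtain that $\lambda_\alpha(B^5_{n,p,q})$ is the largest real root of a polynomial $f(x)$ and $\lambda_\alpha(B^6_{n,p,q})$ is the largest real root of a polynomial $g(x)$, where $g$ arises from $f$ by interchanging the roles of $p$ and $q$ coming from the two sides of $\overleftrightarrow{K_{p,q}}$.

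The decisive step is to compute $f(x)-g(x)$. I anticipate that the same quadratic factor appearing in Theorem~\ref{th:ch-1} reappears, but now with no surviving additive tail, namely
$$f(x)-g(x)=\alpha\,(p-q)\left(\frac{x-\alpha}{1-\alpha}\right)^{n-p-q}\left[x^2-\alpha(p+q)x-pq+2\alpha pq+(1-\alpha)^2\right].$$
The vanishing of the tail at $\alpha=0$ is forced, and can be checked conceptually, by the observation that $B^6_{n,p,q}$ is the reverse digraph of $B^5_{n,p,q}$: the complete bipartite part is symmetric and reversing the added path merely swaps its two endpoints, so $A_0(B^6)=A_0(B^5)^T$ and the two adjacency spectral radii coincide. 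Since $\lambda_\alpha(\overleftrightarrow{K_{p,q}})$ is the largest root of $x^2-\alpha(p+q)x-pq+2\alpha pq=0$, the bracket is strictly positive for every $x>\lambda_\alpha(\overleftrightarrow{K_{p,q}})$. By Lemma~\ref{le:9}, $\overleftrightarrow{K_{p,q}}$ is a proper subdigraph of both $B^5$ and $B^6$, whence $\lambda_\alpha(B^5),\lambda_\alpha(B^6)>\lambda_\alpha(\overleftrightarrow{K_{p,q}})>1$. Therefore, when $\alpha>0$ and $p>q$ we get $f(x)-g(x)>0$ for all $x\geq\lambda_\alpha(B^6_{n,p,q})$; evaluating at $x=\lambda_\alpha(B^6_{n,p,q})$ gives $f(\lambda_\alpha(B^6_{n,p,q}))>0$, which places it beyond the largest root of $f$, i.e.\ $\lambda_\alpha(B^6_{n,p,q})>\lambda_\alpha(B^5_{n,p,q})$. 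The two equality cases drop out of the factor $\alpha(p-q)$: if $p=q$ then $B^5\cong B^6$, and if $\alpha=0$ then $f\equiv g$.

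The main obstacle will be the bookkeeping that produces $f$ and $g$ and, above all, verifying the clean factorisation of $f-g$. One must check that the path-closing term and the out-degree terms combine so that the entire difference carries the factor $\alpha(p-q)$ with no leftover constant, in contrast to the $(1-\alpha)^3(p-q)$ tail appearing in Theorem~\ref{th:ch-1}; the reverse-digraph remark is the right consistency check, since it guarantees a priori that $f-g$ must vanish identically at $\alpha=0$. Once the factorisation is confirmed and the quadratic bracket is recognised as the positive shift of the $\overleftrightarrow{K_{p,q}}$ polynomial, the comparison and the equality analysis are immediate.
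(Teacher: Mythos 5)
Your overall strategy is exactly the paper's: collapse coordinates with Lemma~\ref{le:a}, eliminate the added path, and compare the largest roots of the two resulting polynomials $f$ and $g$. But the decisive identity you write down for $f-g$ is false, and the reason is instructive. The path in $B^5_{n,p,q}$ closes at a vertex of $V_q$ while the path in $B^6_{n,p,q}$ closes at a vertex of $V_p$, so the path-closing contributions to $f$ and $g$ are $-(1-\alpha)^2(x-\alpha q)$ and $-(1-\alpha)^2(x-\alpha p)$ respectively; their difference leaves the additive tail $-(1-\alpha)^2\alpha(p-q)$, which is nonzero (and negative) when $\alpha>0$ and $p>q$. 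The correct identity is
$$f(x)-g(x)=\left(\frac{x-\alpha}{1-\alpha}\right)^{n-p-q}\alpha(p-q)\left[x^2-\alpha(p+q)x-pq+2\alpha pq+(1-\alpha)^2\right]-(1-\alpha)^2\alpha(p-q).$$
Your reverse-digraph consistency check only forces $f-g$ to vanish identically at $\alpha=0$; it cannot rule out a tail that itself carries a factor of $\alpha$, which is exactly what occurs. With the tail present, your step ``the bracket is strictly positive, hence $f-g>0$'' does not follow as stated.

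The repair is short and is what the paper does: the bracket is the characteristic polynomial of $\overleftrightarrow{K_{p,q}}$ shifted up by $(1-\alpha)^2$, so it is not merely positive but exceeds $(1-\alpha)^2$ for all $x>\lambda_\alpha(\overleftrightarrow{K_{p,q}})$, while $\left(\frac{x-\alpha}{1-\alpha}\right)^{n-p-q}>1$ there; hence the main term strictly exceeds $(1-\alpha)^2\alpha(p-q)$ and dominates the tail, giving $f(x)-g(x)>0$ for all $x>\lambda_\alpha(\overleftrightarrow{K_{p,q}})$, and in particular for $x\geq\lambda_\alpha(B^6_{n,p,q})$ by Lemma~\ref{le:9}. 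Everything else in your outline --- the collapse of coordinates via Lemma~\ref{le:a}, the root-comparison step, and the two equality cases $p=q$ and $\alpha=0$ --- matches the paper's proof.
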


\begin{proof} If $p=q$, then $B^5_{n,p,q}\cong B^6_{n,p,q}$. Hence $\lambda_\alpha(B^5_{n,p,q})=\lambda_\alpha(B^6_{n,p,q})$.
Otherwise $p>q$, let $G=B^5_{n,p,q}$. Similar to the proof Theorem \ref{th:ch-1}, we can know that $\lambda_\alpha(G)$ satisfy the follow equation
\begin{align*}
&\left(\frac{\lambda_\alpha(G)-\alpha}{1-\alpha}\right)^{n-p-q}[\lambda^3_\alpha(G)-(\alpha p+2\alpha q+\alpha)\lambda^2_\alpha(G)+(\alpha^2 q^2
+\alpha^2pq+2\alpha pq+\alpha^2q+\alpha^2p\\
&-pq)\lambda_\alpha(G)-2\alpha^2q^2p-2\alpha^2pq+\alpha q^2p+\alpha pq+2\alpha^2q-\alpha^3q-\alpha q]-(1-\alpha)^2 (\lambda_\alpha(G)-\alpha q)=0.
\end{align*}

Let $f(x)=\left(\frac{x-\alpha}{1-\alpha}\right)^{n-p-q}[x^3-(\alpha p+2\alpha q+\alpha)x^2+(\alpha^2 q^2+\alpha^2pq+2\alpha pq+\alpha^2q+\alpha^2p-pq)x
-2\alpha^2q^2p-2\alpha^2pq+\alpha q^2p+\alpha pq+2\alpha^2q-\alpha^3q-\alpha q]-(1-\alpha)^2 (x-\alpha q)$.
Then $\lambda_\alpha(B^5_{n,p,q})$ is the largest real root of $f(x)=0$. Similarly,
let $g(x)=\left(\frac{x-\alpha}{1-\alpha}\right)^{n-p-q}[x^3-(\alpha q+2\alpha p+\alpha)x^2+(\alpha^2 p^2+\alpha^2pq+2\alpha pq+\alpha^2p+\alpha^2q-pq)x
-2\alpha^2p^2q-2\alpha^2pq+\alpha p^2q+\alpha pq+2\alpha^2p-\alpha^3p-\alpha p]-(1-\alpha)^2 (x-\alpha p)$, then
$\lambda_\alpha(B^6_{n,p,q})$ is the largest real root of $g(x)=0$. Thus
\begin{align*}
f(x)-g(x)&=\left(\frac{x-\alpha}{1-\alpha}\right)^{n-p-q}\cdot\alpha\cdot(p-q)\cdot[x^2-\alpha(p+q)x-pq+2\alpha pq+(1-\alpha)^2]\\
&\ \ \ \ -(1-\alpha)^2\alpha(p-q).
\end{align*}
For $\alpha=0$, $f(x)=g(x)$, then $\lambda_\alpha(B^5_{n,p,q})=\lambda_\alpha(B^6_{n,p,q})$.
\\For $0<\alpha<1$.
Since $\lambda_\alpha(\overleftrightarrow{K_{p,q}})$ is the the largest real root of the equation $x^2-\alpha(p+q)x-pq+2\alpha pq=0$,
$x^2-\alpha(p+q)x-pq+2\alpha pq>0$ for all $x>\lambda_\alpha(\overleftrightarrow{K_{p,q}})$.
Thus $x^2-\alpha(p+q)x-pq+2\alpha pq+(1-\alpha)^2>(1-\alpha)^2$ for all $x>\lambda_\alpha(\overleftrightarrow{K_{p,q}})$.
Since $p>q$, $f(x)-g(x)>\left(\frac{x-\alpha}{1-\alpha}\right)^{n-p-q}(1-\alpha)^2\alpha(p-q)-(1-\alpha)^2\alpha(p-q)>0$ for all $x>\lambda_\alpha(\overleftrightarrow{K_{p,q}})>1$.
By Lemma \ref{le:9}, we have
$\lambda_\alpha(B^6_{n,p,q})>\lambda_\alpha(\overleftrightarrow{K_{p,q}})>1$. Hence
$f(x)-g(x)>0$ for all $x\geq\lambda_\alpha(B^6_{n,p,q})$. Then $\lambda_\alpha(B^6_{n,p,q})>\lambda_\alpha(B^5_{n,p,q})$.

Therefore, $\lambda_\alpha(B^6_{n,p,q})\geq\lambda_\alpha(B^5_{n,p,q})$ with equality if and only if $p=q$ or $\alpha=0$.
\end{proof}

\noindent\begin{theorem}\label{th:ch-3} Let $B^3_{n,p,q}=B^1_{n,p,q}-\{(v_n,v_p)\}+\{(v_n,v_1)\}$. Then
$$\lambda_\alpha(B^3_{n,p,q})>\lambda_\alpha(B^1_{n,p,q}).$$
\end{theorem}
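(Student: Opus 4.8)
The plan is to realize the passage from $B^1_{n,p,q}$ to $B^3_{n,p,q}$ as a single in-arc shift and to invoke the arc-moving Lemma~\ref{le:4}. Writing $V_p=\{v_1,\ldots,v_p\}$ and $V_q=\{v_{p+1},\ldots,v_{p+q}\}$ for the two colour classes of the subdigraph $\overleftrightarrow{K_{p,q}}$, the construction $B^3_{n,p,q}=B^1_{n,p,q}-\{(v_n,v_p)\}+\{(v_n,v_1)\}$ removes the single in-arc of $v_p$ coming from the last path vertex $v_n$ and redirects it to $v_1$. In the notation of Lemma~\ref{le:4} this is exactly $H=G-\{(v_n,v_p)\}+\{(v_n,v_1)\}$ with $G=B^1_{n,p,q}$ and $t=1$, the vertex losing the arc being $v_p$ and the vertex gaining it being $v_1$. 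To fire the lemma I must check three things: that $v_n\in N^-_{v_p}\setminus(N^-_{v_1}\cup\{v_1\})$, that the Perron vector $X=(x_1,\ldots,x_n)^T$ of $A_\alpha(B^1_{n,p,q})$ satisfies $x_1>x_p$, and that $H=B^3_{n,p,q}$ is strongly connected.

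The first point is immediate from the structure: $v_n$ points only to $v_p$ in $B^1_{n,p,q}$, so $v_n\in N^-_{v_p}$; the in-neighbours of $v_1$ are precisely the $q$ vertices of $V_q$, none of which is $v_n$, and $v_n\neq v_1$ since the standing assumption $p+q\le n-1$ guarantees at least one path vertex. Hence $v_n\in N^-_{v_p}\setminus(N^-_{v_1}\cup\{v_1\})$.

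For the eigenvector inequality I would compare the out-neighbourhoods of $v_1$ and $v_p$. Both lie in $V_p$, so there is no arc between them in either direction, i.e. $(v_1,v_p)\notin E(B^1_{n,p,q})$ and $(v_p,v_1)\notin E(B^1_{n,p,q})$. Reading off the structure, $N^+_{v_p}=V_q$ while $N^+_{v_1}=V_q\cup\{v_{p+q+1}\}$, the extra out-neighbour being the initial path vertex, so $N^+_{v_p}\subsetneq N^+_{v_1}$. Applying Lemma~\ref{le:a} to the pair $(v_i,v_j)=(v_p,v_1)$ then yields $x_1>x_p$.

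Finally, $B^3_{n,p,q}$ is strongly connected: the redirected path now forms the directed cycle $v_1v_{p+q+1}\cdots v_nv_1$ through $v_1\in V_p$, and since $\overleftrightarrow{K_{p,q}}$ is strongly connected on $V_p\cup V_q$ and meets this cycle at $v_1$, every vertex reaches every other vertex. With all three hypotheses verified and the inequality $x_1>x_p$ strict, Lemma~\ref{le:4} gives $\lambda_\alpha(B^3_{n,p,q})>\lambda_\alpha(B^1_{n,p,q})$, as desired. No step presents a genuine obstacle; the only care needed is the correct bookkeeping of the out- and in-neighbourhoods of $v_1,v_p,v_n$ and the verification of strong connectivity of the modified digraph, both of which are routine given the explicit description of $B^1_{n,p,q}$.
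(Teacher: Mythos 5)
Your proof is correct and follows essentially the same route as the paper: the paper also verifies strong connectivity of $B^3_{n,p,q}$, deduces $x_1>x_p$ from Lemma \ref{le:a}, and concludes via the arc-shifting Lemma \ref{le:4}. Your write-up merely spells out the hypothesis checks (the neighbourhood comparisons and the membership $v_n\in N^-_{v_p}\setminus(N^-_{v_1}\cup\{v_1\})$) that the paper leaves implicit.
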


\begin{proof}
Clearly $B^3_{n,p,q}$ is strongly connected. Let $X=(x_1,x_2,\ldots,x_n)^T$ be the
Perron vector corresponding to $\lambda_\alpha(B^1_{n,p,q})$, where $x_i$ corresponds to the vertex $v_i$. By Lemma
\ref{le:a}, we get $x_1>x_p$
Thus $\lambda_\alpha(B^3_{n,p,q})>\lambda_\alpha(B^1_{n,p,q})$ by Lemma \ref{le:4}.
\end{proof}

\noindent\begin{theorem}\label{th:ch-7} Let $B^4_{n,p,q}=B^2_{n,p,q}-\{(v_n,v_{p+q})\}+\{(v_n,v_{p+1})\}$. Then
$$\lambda_\alpha(B^4_{n,p,q})>\lambda_\alpha(B^2_{n,p,q}).$$
\end{theorem}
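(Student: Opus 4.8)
The plan is to follow the argument of Theorem~\ref{th:ch-3} almost verbatim, recasting the passage from $B^2_{n,p,q}$ to $B^4_{n,p,q}$ as a single instance of the arc-shifting Lemma~\ref{le:4}. Write $G=B^2_{n,p,q}$ and let $X=(x_1,\dots,x_n)^T$ be its Perron vector. The transformation deletes $(v_n,v_{p+q})$ and inserts $(v_n,v_{p+1})$, i.e. it moves the unique ``returning'' arc of the attached path from the head $v_{p+q}$ to the head $v_{p+1}$. To fit the notation of Lemma~\ref{le:4}, I take its $v_p$ to be our $v_{p+q}$, its $v_q$ to be our $v_{p+1}$, and the single shifted tail to be $v_n$ (so $t=1$). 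One checks at once that $v_n\in N^-_{v_{p+q}}$, since the arc $(v_n,v_{p+q})$ lies in $G$, while $v_n\notin N^-_{v_{p+1}}\cup\{v_{p+1}\}$, since in $G$ the vertex $v_{p+1}$ is the initial vertex of the path and receives no arc from it; thus the hypotheses of Lemma~\ref{le:4} are met and $B^4_{n,p,q}$ is exactly the resulting digraph $H$.

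The only substantive step is to establish the Perron-entry inequality $x_{p+1}>x_{p+q}$, for which I would invoke Lemma~\ref{le:a}. Both $v_{p+1}$ and $v_{p+q}$ lie in the same part $V_q$ of $\overleftrightarrow{K_{p,q}}$, so they are nonadjacent in both directions, as that lemma requires. Comparing their out-neighbourhoods, $v_{p+q}$ is the terminal vertex of the attached path and carries no outgoing path arc, so $N^+_{v_{p+q}}=V_p$; on the other hand $v_{p+1}$ is the initial vertex of the path, whence $N^+_{v_{p+1}}=V_p\cup\{v_{p+q+1}\}$. Hence $N^+_{v_{p+q}}\subsetneq N^+_{v_{p+1}}$, and the strict-inclusion clause of Lemma~\ref{le:a} yields $x_{p+1}>x_{p+q}$.

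It then remains to confirm that $H=B^4_{n,p,q}$ is strongly connected, so that the strict conclusion of Lemma~\ref{le:4} applies. After the shift the path $v_{p+1}v_{p+q+1}\cdots v_n$ closes into a directed cycle returning to $v_{p+1}$, while the complete bipartite core $\overleftrightarrow{K_{p,q}}$ keeps every vertex of $V_p\cup V_q$ mutually reachable; combining these, every vertex both reaches and is reached from $v_{p+1}$, so $H$ is strongly connected. With $x_{p+1}>x_{p+q}$ and $H$ strongly connected, Lemma~\ref{le:4} gives $\lambda_\alpha(B^4_{n,p,q})>\lambda_\alpha(B^2_{n,p,q})$, as claimed. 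I expect no real obstacle here: the entire content is the out-neighbourhood computation of the second paragraph, and the proof is structurally identical to that of Theorem~\ref{th:ch-3}, with the pair $(v_1,v_p)$ there replaced by $(v_{p+1},v_{p+q})$.
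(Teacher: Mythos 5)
Your proposal is correct and follows essentially the same route as the paper: the paper's own (much terser) proof likewise obtains $x_{p+1}>x_{p+q}$ from Lemma~\ref{le:a} and then applies the arc-shifting Lemma~\ref{le:4}. Your out-neighbourhood computation $N^+_{v_{p+q}}=V_p\subsetneq V_p\cup\{v_{p+q+1}\}=N^+_{v_{p+1}}$ and the strong-connectivity check simply make explicit the details the paper leaves unstated.
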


\begin{proof}
Clearly $B^4_{n,p,q}$ is strongly connected. Let $X=(x_1,x_2,\ldots,x_n)^T$ be the
Perron vector corresponding to $\lambda_\alpha(B^2_{n,p,q})$, where $x_i$ corresponds to the vertex $v_i$.
By Lemma \ref{le:a}, we get $x_{p+1}>x_{p+q}$.
Thus $\lambda_\alpha(B^4_{n,p,q})>\lambda_\alpha(B^2_{n,p,q})$ by Lemma \ref{le:4}.
\end{proof}

\noindent\begin{theorem}\label{th:ch-5} For digraphs $B^1_{n,p,q}$ and $B^5_{n,p,q}$, as shown in Figure \ref{Fig.1} and \ref{Fig.2},
$$\lambda_\alpha(B^5_{n-1,p,q})>\lambda_\alpha(B^1_{n,p,q}).$$
\end{theorem}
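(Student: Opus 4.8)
The plan is to compare the two $A_\alpha$ characteristic equations, which—crucially—share a common cubic factor. From the proof of Theorem~\ref{th:ch-1}, $\lambda_1:=\lambda_\alpha(B^1_{n,p,q})$ is the largest real root of
$$f(x)=\left(\frac{x-\alpha}{1-\alpha}\right)^{n-p-q}P(x)-(1-\alpha)^3q,$$
where $P(x)$ is the cubic displayed there; and from the proof of Theorem~\ref{th:ch-2}, applied with $n$ replaced by $n-1$, $\lambda_5:=\lambda_\alpha(B^5_{n-1,p,q})$ is the largest real root of
$$g(x)=\left(\frac{x-\alpha}{1-\alpha}\right)^{n-p-q-1}P(x)-(1-\alpha)^2(x-\alpha q),$$
with \emph{the same} cubic $P(x)$. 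The pairing $B^5_{n-1}$ versus $B^1_n$ is the natural one, since $B^1_{n,p,q}\in\mathcal{G}_{n,p,q}$ and $B^5_{n-1,p,q}\in\mathcal{G}_{n-1,p,q}$ both force $n-p-q$ to be odd. Since $g$ has positive leading coefficient and $\lambda_5$ is its largest real root, $g(x)>0$ for $x>\lambda_5$; hence it suffices to prove $g(\lambda_1)<0$, which forces $\lambda_5>\lambda_1$.

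First I would evaluate $g$ at $\lambda_1$, using the relation $f(\lambda_1)=0$, that is $\left(\frac{\lambda_1-\alpha}{1-\alpha}\right)^{n-p-q}P(\lambda_1)=(1-\alpha)^3q$, to eliminate the awkward quantity $P(\lambda_1)$. Dividing out one further power of $\frac{\lambda_1-\alpha}{1-\alpha}$ (valid because $\lambda_1>1>\alpha$) gives
$$g(\lambda_1)=\frac{(1-\alpha)^2}{\lambda_1-\alpha}\left[(1-\alpha)^2q-(\lambda_1-\alpha q)(\lambda_1-\alpha)\right].$$
Thus $g(\lambda_1)<0$ is equivalent to $h(\lambda_1)>0$, where $h(x)=(x-\alpha q)(x-\alpha)-(1-\alpha)^2q$ is an explicit upward quadratic. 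The whole theorem is reduced to this single inequality.

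The main obstacle is bounding $\lambda_1$ well enough to guarantee $h(\lambda_1)>0$, and the key device is to bring in $\mu:=\lambda_\alpha(\overleftrightarrow{K_{p,q}})$. By Lemma~\ref{le:9}, $\overleftrightarrow{K_{p,q}}$ is a proper subdigraph of $B^1_{n,p,q}$, so $\lambda_1>\mu$; and $\mu$ is the largest root of $x^2-\alpha(p+q)x-pq+2\alpha pq=0$, i.e. $\mu^2=\alpha(p+q)\mu+pq-2\alpha pq$. Substituting this identity into $h(\mu)$ collapses it to the clean factored form
$$h(\mu)=(p-1)\bigl(\alpha\mu+q(1-2\alpha)\bigr).$$
Since $p\geq q\geq2$, the factor $p-1\geq1$; and $\alpha\mu+q(1-2\alpha)>0$ for every $0\leq\alpha<1$: for $\alpha\leq\frac{1}{2}$ both summands are nonnegative, while for $\alpha>\frac{1}{2}$ one uses $\mu>\alpha\Delta^+=\alpha p$ (Lemma~\ref{le:3}, since the maximum outdegree of $\overleftrightarrow{K_{p,q}}$ is $p$) to get $\alpha\mu>\alpha^2 p\geq\alpha^2 q\geq q(2\alpha-1)$, the last step being exactly $(\alpha-1)^2\geq0$. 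Hence $h(\mu)>0$.

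Finally, since $\mu>\alpha p\geq\alpha q\geq\frac{\alpha(1+q)}{2}$ (using $q\geq1$), the point $\mu$ lies to the right of the vertex $\frac{\alpha(1+q)}{2}$ of $h$, so $h$ is increasing on $[\mu,\infty)$; because $\lambda_1>\mu$ this yields $h(\lambda_1)>h(\mu)>0$. Consequently $g(\lambda_1)<0$, whence $\lambda_\alpha(B^5_{n-1,p,q})>\lambda_\alpha(B^1_{n,p,q})$, as required. The only places demanding care are the faithful transcription of the common cubic $P(x)$ from the two preceding proofs and the $\alpha>\frac{1}{2}$ sign analysis, both of which reduce to the inequality $(\alpha-1)^2\geq0$.
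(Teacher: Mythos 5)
Your proof is correct, but it takes a genuinely different route from the paper's. The paper argues structurally: it observes that $B^5_{n,p,q}=B^1_{n,p,q}-\{(v_n,v_p)\}+\{(v_n,v_{p+1})\}$, uses the eigenvalue equations together with $x_1>x_p$ (Lemma \ref{le:a}) to show that the Perron vector of $B^1_{n,p,q}$ satisfies $x_{p+1}>x_p$, invokes the arc-switching Lemma \ref{le:4} to conclude $\lambda_\alpha(B^5_{n,p,q})>\lambda_\alpha(B^1_{n,p,q})$, and then applies the subdivision Lemma \ref{le:11} to pass from $B^5_{n,p,q}$ to $B^5_{n-1,p,q}$. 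You instead compare the two characteristic equations directly, exploiting the fact that the cubic factor $P(x)$ is identical in the displayed equations of Theorems \ref{th:ch-1} and \ref{th:ch-2}, eliminating it at $x=\lambda_1$ via $f(\lambda_1)=0$ and reducing the whole statement to the quadratic inequality $h(\lambda_1)>0$ with $h(x)=(x-\alpha q)(x-\alpha)-(1-\alpha)^2q$. I checked the key steps: the evaluation of $g(\lambda_1)$ is right; the substitution $\mu^2=\alpha(p+q)\mu+pq-2\alpha pq$ does yield $h(\mu)=(p-1)\bigl(\alpha\mu+q(1-2\alpha)\bigr)$; the sign analysis (including the case $\alpha>\frac{1}{2}$ via $\mu>\alpha p$ and $(\alpha-1)^2\geq0$) covers all $0\leq\alpha<1$ and $p\geq q\geq2$; and the monotonicity of $h$ on $[\mu,\infty)$ together with $\lambda_1>\mu$ (Lemma \ref{le:9}) closes the argument. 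What the paper's approach buys is brevity and independence from the heavy polynomial computations of the preceding theorems; what yours buys is a direct, quantitative comparison at orders $n-1$ versus $n$ without the intermediate digraph $B^5_{n,p,q}$, and an explicit lower bound on the gap through $h(\lambda_1)>h(\mu)>0$. The only caveat is that your argument inherits its validity from the characteristic equations derived in Theorems \ref{th:ch-1} and \ref{th:ch-2}, so it is not logically independent of those computations, whereas the paper's proof is.
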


\begin{proof} Since $B^5_{n,p,q}=B^1_{n,p,q}-\{(v_n,v_p)\}+\{(v_n,v_{p+1})\}$ and $B^5_{n,p,q}$ is strongly connected. Let $X=(x_1,x_2,\ldots,x_n)^T$ be the
Perron vector corresponding to $\lambda_\alpha(B^1_{n,p,q})$, where $x_i$ corresponds to the vertex $v_i$.
In the following, we will prove $x_{p+1}>x_{p}$.

By Lemma \ref{le:a}, $x_2=x_3=\ldots=x_p\triangleq x_p$, $x_{p+1}=x_{p+2}=\ldots=x_{p+q}\triangleq x_{p+1}$ and $x_1>x_p$.
Therefore
\begin{align*}
\lambda_\alpha(B^1_{n,p,q})x_{p+1}&=\alpha px_{p+1}+(1-\alpha)x_1+(1-\alpha)(p-1)x_p\\
&>\alpha px_{p+1}+(1-\alpha)x_p+(1-\alpha)(p-1)x_p\\
&=\alpha p(x_{p+1}-x_p)+px_p,
\end{align*}
$$\lambda_\alpha(B^1_{n,p,q})x_{p}=\alpha qx_{p}+(1-\alpha)qx_{p+1}.$$
Hence
$$\lambda_\alpha(B^1_{n,p,q})(x_{p+1}-x_p)>(\alpha p+\alpha q-q)(x_{p+1}-x_p)+(p-q)x_p.$$
Furthermore
$$(\lambda_\alpha(B^1_{n,p,q}-(\alpha p+\alpha q-q))(x_{p+1}-x_p)>(p-q)x_p\geq0.$$
However, by Lemma \ref{le:3}, we get $\lambda_\alpha(B^1_{n,p,q})> \alpha\Delta^+\geq\alpha p>\alpha p+\alpha q-q$.
Thus $x_{p+1}>x_p$. Therefore $\lambda_\alpha(B^5_{n,p,q})>\lambda_\alpha(B^1_{n,p,q})$.
By Lemma \ref{le:11}, we have $\lambda_\alpha(B^5_{n-1,p,q})\geq\lambda_\alpha(B^5_{n,p,q})$.
Then $\lambda_\alpha(B^5_{n-1,p,q})>\lambda_\alpha(B^1_{n,p,q})$.
\end{proof}

\noindent\begin{theorem}\label{th:ch-8} For digraphs $B^1_{n,p,q}$ and $B^5_{n,p,q}$, as shown in Figure \ref{Fig.1} and \ref{Fig.2},
$$\lambda_\alpha(B^1_{n-1,p,q})\geq\lambda_\alpha(B^5_{n,p,q}).$$
\end{theorem}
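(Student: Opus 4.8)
The plan is to avoid any interval sign analysis and instead compare the two spectral radii directly through the characteristic equations obtained in the proofs of Theorems~\ref{th:ch-1} and~\ref{th:ch-2}, exploiting the crucial fact that $B^1$ and $B^5$ give rise to the \emph{same} cubic factor. Write $m=n-p-q$, $y=\frac{x-\alpha}{1-\alpha}$, and let $P(x)=x^3-(\alpha p+2\alpha q+\alpha)x^2+\cdots$ denote the common cubic appearing in both theorems. Then $\lambda_\alpha(B^1_{n-1,p,q})$ is the largest real root of $F_1(x)=y^{\,m-1}P(x)-(1-\alpha)^3q$ (the polynomial of Theorem~\ref{th:ch-1} with $n$ replaced by $n-1$), while $\mu_5:=\lambda_\alpha(B^5_{n,p,q})$ is the largest real root of $F_5(x)=y^{\,m}P(x)-(1-\alpha)^2(x-\alpha q)$ (Theorem~\ref{th:ch-2}).

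First I would record that $F_1$ is a polynomial with positive leading coefficient, so $F_1(x)\to+\infty$ and $F_1$ is strictly positive for every $x$ larger than its largest real root. Consequently it suffices to prove $F_1(\mu_5)<0$: this puts $\mu_5$ strictly below the largest root of $F_1$, which is exactly $\lambda_\alpha(B^1_{n-1,p,q})$.

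The heart of the argument is a one-line evaluation of $F_1$ at $\mu_5$ using $F_5(\mu_5)=0$, i.e. $y^{\,m}P(\mu_5)=(1-\alpha)^2(\mu_5-\alpha q)$ with $y=\frac{\mu_5-\alpha}{1-\alpha}\ge 1$. Dividing this identity by $y$ lowers the exponent from $m$ to $m-1$, giving $y^{\,m-1}P(\mu_5)=\frac{(1-\alpha)^3(\mu_5-\alpha q)}{\mu_5-\alpha}$, whence
\[
F_1(\mu_5)=\frac{(1-\alpha)^3(\mu_5-\alpha q)}{\mu_5-\alpha}-(1-\alpha)^3q=(1-\alpha)^3\,\frac{\mu_5(1-q)}{\mu_5-\alpha}.
\]
By Corollary~\ref{co:1} we have $\mu_5\ge 1>\alpha$, so $\mu_5>0$ and $\mu_5-\alpha>0$; together with $0\le\alpha<1$ and $q\ge 2$ (hence $1-q<0$) this makes the right-hand side strictly negative. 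Thus $F_1(\mu_5)<0$, so $\lambda_\alpha(B^1_{n-1,p,q})>\mu_5=\lambda_\alpha(B^5_{n,p,q})$, which is even stronger than the stated $\ge$.

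The only delicate point, and the main (though mild) obstacle, is justifying that the cubic $P(x)$ is genuinely identical in the two characteristic polynomials, so that dividing the relation $F_5(\mu_5)=0$ by $y$ exactly produces the $y^{\,m-1}P$ term of $F_1$; inspecting the two displayed cubics in Theorems~\ref{th:ch-1} and~\ref{th:ch-2} confirms this, after which the comparison collapses to the single displayed line and no estimate on an interval $(1,2)$ or at $\lambda_\alpha(\overleftrightarrow{K_{p,q}})$ is required.
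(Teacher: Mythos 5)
Your proof is correct, but it takes a genuinely different route from the paper's. The paper argues by a local arc perturbation: it sets $B^{5*}_{n,p,q}=B^5_{n,p,q}-\{(v_{n-1},v_n)\}+\{(v_{n-1},v_{p})\}$, uses Lemma \ref{le:a} to get $x_p>x_n$ for the Perron vector of $B^5_{n,p,q}$, applies Lemma \ref{le:4} to conclude $\lambda_\alpha(B^{5*}_{n,p,q})\geq\lambda_\alpha(B^5_{n,p,q})$, and then notes that $v_n$ has indegree $0$ in $B^{5*}_{n,p,q}$, so its spectral radius equals that of the induced subdigraph $B^1_{n-1,p,q}$. You instead evaluate the polynomial for $B^1_{n-1,p,q}$ at $\mu_5=\lambda_\alpha(B^5_{n,p,q})$: the two bracketed cubics in the proofs of Theorems \ref{th:ch-1} and \ref{th:ch-2} are indeed identical, the exponents $m-1$ and $m$ are right, and dividing $F_5(\mu_5)=0$ by $y=\frac{\mu_5-\alpha}{1-\alpha}>0$ gives exactly $F_1(\mu_5)=(1-\alpha)^3\mu_5(1-q)/(\mu_5-\alpha)$, which is negative since $\mu_5>1>\alpha$ and $q\geq 2$; combined with the fact that $F_1$ has positive leading coefficient and $\lambda_\alpha(B^1_{n-1,p,q})$ is its largest real root, this yields the claim. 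Your version even gives the strict inequality $\lambda_\alpha(B^1_{n-1,p,q})>\lambda_\alpha(B^5_{n,p,q})$, which the paper's argument cannot, because Lemma \ref{le:4} only gives strictness when the perturbed digraph is strongly connected and $B^{5*}_{n,p,q}$ is not. The trade-off is that your argument inherits the burden of the two explicit characteristic equations computed in Theorems \ref{th:ch-1} and \ref{th:ch-2} (including the implicit claim that the spectral radius is the largest real root of each), whereas the paper's perturbation argument is independent of those computations and of any polynomial identities.
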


\begin{proof} Let $B^{5*}_{n,p,q}=B^5_{n,p,q}-\{(v_{n-1},v_n)\}+\{(v_{n-1},v_{p})\}$.
Let $X=(x_1,x_2,\ldots,x_n)^T$ be the
Perron vector corresponding to $\lambda_\alpha(B^5_{n,p,q})$, where $x_i$ corresponds to the vertex $v_i$.
By Lemma \ref{le:a}, we get $x_p>x_n$. Then $\lambda_\alpha(B^{5*}_{n,p,q})\geq\lambda_\alpha(B^5_{n,p,q})$.
Since the indegree of $v_n$ is 0 in $B^{5*}_{n,p,q}$, $B^{5*}_{n,p,q}$ is not strongly connected
which contains $B^1_{n-1,p,q}$ as a induced subdigraph, we have $\lambda_\alpha(B^{5*}_{n,p,q})=\lambda_\alpha(B^1_{n-1,p,q})$.
Thus $\lambda_\alpha(B^1_{n-1,p,q})\geq\lambda_\alpha(B^5_{n,p,q})$.
\end{proof}

In the following, we give the main results of this section.

\noindent\begin{theorem}\label{th:ch-9} Let $p\geq q\geq2$, $p+q\leq n-1$, $n\equiv p+q \ (mod \ 2)$ and
$G\in\mathcal{G}_{n,p,q}$. Then

$(i)$ For $\alpha=0$, $\lambda_\alpha(G)\geq \lambda_\alpha(B^5_{n,p,q})=\lambda_\alpha(B^6_{n,p,q})$ and the equality holds
if and only if $G\cong B^5_{n,p,q}$ or $G\cong B^6_{n,p,q}$.

$(ii)$ For $0<\alpha<1$, $\lambda_\alpha(G)\geq \lambda_\alpha(B^5_{n,p,q})$ and the equality holds
if and only if $G\cong B^5_{n,p,q}$.
\end{theorem}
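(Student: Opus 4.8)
The plan is to show that a minimizer of $\lambda_\alpha$ over $\mathcal{G}_{n,p,q}$ must be arc-minimal, to use the bipartite structure together with the parity hypothesis to cut the arc-minimal members down to just $B^5_{n,p,q}$ and $B^6_{n,p,q}$, and then to separate the two by Theorem \ref{th:ch-2}. First I would fix a digraph $G\in\mathcal{G}_{n,p,q}$ of minimum $A_\alpha$ spectral radius. Because $A_\alpha(G)$ is nonnegative and irreducible and its spectral radius strictly increases when any entry is increased, Lemma \ref{le:9} shows that $G$ cannot contain a deletable arc, i.e. an arc whose removal leaves a strongly connected bipartite digraph still containing $\overleftrightarrow{K_{p,q}}$. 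Since the $2pq$ arcs of $\overleftrightarrow{K_{p,q}}$ are forced, the only freedom is how the remaining $m:=n-p-q$ vertices are attached.

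Next I would count arcs. Strong connectivity forces each of the $m$ extra vertices to have indegree and outdegree at least $1$, and a short bookkeeping argument shows that at least $m+1$ arcs beyond $\overleftrightarrow{K_{p,q}}$ are needed, with equality only if the extra vertices form a single directed path $a\to u_1\to\cdots\to u_m\to b$ with $a,b\in V(\overleftrightarrow{K_{p,q}})$ and each $u_i$ of indegree and outdegree exactly one. Arc-minimality from the previous step then identifies $G$ with $\overleftrightarrow{K_{p,q}}$ plus one such attached path. I would then bring in the colour classes $V_p,V_q$ of $\overleftrightarrow{K_{p,q}}$: along the attached path the colours alternate, so the endpoints $a,b$ share a class iff the path has even length. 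Here the path has length $m+1$ with $m=n-p-q$ even, hence $a$ and $b$ lie in different classes, and in particular the closed case $a=b$ (which would create an odd directed cycle and destroy bipartiteness) cannot occur. As $\overleftrightarrow{K_{p,q}}$ is invariant under all permutations within each class, matched by equal Perron entries via Lemma \ref{le:a}, only the classes of $a$ and $b$ matter. This leaves exactly two digraphs up to isomorphism: tail in $V_p$, head in $V_q$, which is $B^5_{n,p,q}$; and tail in $V_q$, head in $V_p$, which is $B^6_{n,p,q}$.

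Finally I would apply Theorem \ref{th:ch-2}: $\lambda_\alpha(B^6_{n,p,q})\geq\lambda_\alpha(B^5_{n,p,q})$, with equality exactly when $p=q$ or $\alpha=0$. For $\alpha=0$ the two values coincide, so both digraphs are minimizers, which is $(i)$; for $0<\alpha<1$ the inequality is strict when $p>q$, while $B^5_{n,p,q}\cong B^6_{n,p,q}$ when $p=q$, so $B^5_{n,p,q}$ is the unique minimizer, which is $(ii)$. The equality clauses follow since any $G$ that fails to be arc-minimal satisfies $\lambda_\alpha(G)>\lambda_\alpha(H)$ for the arc-minimal $H$ to which it reduces (Lemma \ref{le:9}), and the arc-minimal members are precisely $B^5_{n,p,q}$ and $B^6_{n,p,q}$.

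The main obstacle is the structural step: proving cleanly that minimality of the arc set forces a single attached directed path with all internal vertices of in- and out-degree one, and then combining bipartiteness with the parity of $m$ to exclude same-class endpoints and the cycle case $a=b$. This requires ruling out configurations with two or more attached ears (which cost strictly more arcs) and any extra arcs between the path and $\overleftrightarrow{K_{p,q}}$, and confirming that within-class symmetry makes the choice of $a,b$ immaterial beyond their colours. Once the candidate set is narrowed to $\{B^5_{n,p,q},B^6_{n,p,q}\}$, the conclusion is immediate from Theorem \ref{th:ch-2}.
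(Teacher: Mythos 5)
There is a genuine gap at the structural step, and it is exactly the one you flag as ``the main obstacle.'' You pass from ``a minimizer contains no deletable arc'' (which Lemma \ref{le:9} does give) to ``a minimizer attains the minimum possible number of arcs, namely $2pq+m+1$, hence is $\overleftrightarrow{K_{p,q}}$ plus a single attached path.'' These are different conditions: a digraph in which no single arc can be removed without destroying strong connectivity need not have the globally minimum arc count. Concretely, take $m=n-p-q=2$ and attach two disjoint ears $a\to u_1\to b$ and $c\to u_2\to d$ with $a,b\in V_p$ and $c,d\in V_p$ (each ear of length two keeps the digraph bipartite). This digraph lies in $\mathcal{G}_{n,p,q}$, every arc outside $\overleftrightarrow{K_{p,q}}$ is essential for strong connectivity, yet it has $m+2$ extra arcs and is not a single attached path. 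Your argument gives no way to show that such multi-ear configurations have $A_\alpha$ spectral radius strictly larger than $\lambda_\alpha(B^5_{n,p,q})$: deleting a whole ear produces a proper subdigraph on \emph{fewer} vertices, so Lemma \ref{le:9} only compares $G$ with a member of $\mathcal{G}_{n',p,q}$ for some $n'<n$, not with $B^5_{n,p,q}$ itself.

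Closing this gap is precisely where the paper's machinery is needed and where your proposal omits it. The paper extracts from an arbitrary $G$ a subdigraph $H$ isomorphic to one of $B^1,\dots,B^6$ on $p+q+k$ or $p+q+l$ vertices (with the parity of the attached path determined by bipartiteness), then uses the subdivision Lemma \ref{le:11} to lengthen the attached path back up to $n$ or $n-1$ vertices, and finally chains the comparison results (Theorems \ref{th:ch-1}, \ref{th:ch-3}, \ref{th:ch-7}, \ref{th:ch-8} and, for the odd-length subpaths, Theorem \ref{th:ch-5}) to conclude $\lambda_\alpha(H)\geq\lambda_\alpha(B^5_{n,p,q})$ in every case. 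Your final step (separating $B^5$ from $B^6$ via Theorem \ref{th:ch-2}) and your parity/colour-class analysis of a single attached ear are both correct, but without the reduction through shorter subdigraphs and Lemma \ref{le:11} the argument does not cover all of $\mathcal{G}_{n,p,q}$.
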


\begin{proof} Since $G\in\mathcal{G}_{n,p,q}$, $\overleftrightarrow{K_{p,q}}$ is a proper subdigraph of $G$.
Since $G$ is strongly connected, it is possible to obtain a digraph $H$ from $G$ by deleting vertices and arcs in
a way such that one has a subdigraph $\overleftrightarrow{K_{p,q}}$. Therefore

(1) $H\cong B^1_{p+q+k,p,q}$, \ \ \ \ ($k\equiv 1$ \ (mod \ 2), $k\geq1$) or

(2) $H\cong B^2_{p+q+k,p,q}$, \ \ \ \ ($k\equiv 1$ \ (mod \ 2), $k\geq1$) or

(3) $H\cong B^3_{p+q+k,p,q}$, \ \ \ \ ($k\equiv 1$ \ (mod \ 2), $k\geq1$) or

(4) $H\cong B^4_{p+q+k,p,q}$, \ \ \ \ ($k\equiv 1$ \ (mod \ 2), $k\geq1$) or

(5) $H\cong B^5_{p+q+l,p,q}$, \ \ \ \ \ ($l\equiv 0$ \ (mod \ 2), $l\geq2$) or

(6) $H\cong B^6_{p+q+l,p,q}$, \ \ \ \ \ ($l\equiv 0$ \ (mod \ 2), $l\geq2$).

By Lemma \ref{le:9}, $\lambda_\alpha(G)\geq\lambda_\alpha(H)$, the equality holds if and only if $H\cong G$.

{\bf Case (i).} $H\cong B^1_{p+q+k,p,q}$, \ \ \ \ ($k\equiv 1$ \ (mod \ 2), $k\geq1$)

Insert $n-p-q-k-1$ vertices into the directed path $P_{k+2}$ such that
the resulting bipartite digraph is $B^1_{n-1,p,q}$, then
$\lambda_\alpha(H)\geq\lambda_\alpha(B^1_{n-1,p,q})$ by using Lemma \ref{le:11} repeatedly $n-p-q-k-1$ times,
and thus $\lambda_\alpha(G)>\lambda_\alpha(H)\geq\lambda_\alpha(B^1_{n-1,p,q})\geq\lambda_\alpha(B^5_{n,p,q})$
by Theorem \ref{th:ch-8}.

{\bf Case (ii).} $H\cong B^2_{p+q+k,p,q}$, \ \ \ \ ($k\equiv 1$ \ (mod \ 2), $k\geq1$)

Insert $n-p-q-k-1$ vertices into the directed path $P_{k+2}$ such that
the resulting bipartite digraph is $B^2_{n-1,p,q}$, then
$\lambda_\alpha(H)\geq\lambda_\alpha(B^2_{n-1,p,q})$ by using Lemma \ref{le:11} repeatedly $n-p-q-k-1$ times,
and thus $\lambda_\alpha(G)>\lambda_\alpha(H)\geq\lambda_\alpha(B^2_{n-1,p,q})\geq\lambda_\alpha(B^1_{n-1,p,q})\geq\lambda_\alpha(B^5_{n,p,q})$
by Theorems \ref{th:ch-1} and \ref{th:ch-8}.

{\bf Case (iii).} $H\cong B^3_{p+q+k,p,q}$, \ \ \ \ ($k\equiv 1$ \ (mod \ 2), $k\geq1$)

Insert $n-p-q-k-1$ vertices into the directed cycle $C_{k+1}$ such that
the resulting bipartite digraph is $B^3_{n-1,p,q}$, then
$\lambda_\alpha(H)\geq\lambda_\alpha(B^3_{n-1,p,q})$ by using Lemma \ref{le:11} repeatedly $n-p-q-k-1$ times,
and thus $\lambda_\alpha(G)>\lambda_\alpha(H)\geq\lambda_\alpha(B^3_{n-1,p,q})>\lambda_\alpha(B^1_{n-1,p,q})\geq\lambda_\alpha(B^5_{n,p,q})$
by Theorems \ref{th:ch-3} and \ref{th:ch-8}.

{\bf Case (iv).} $H\cong B^4_{p+q+k,p,q}$, \ \ \ \ ($k\equiv 1$ \ (mod \ 2), $k\geq1$)

Insert $n-p-q-k-1$ vertices into the directed cycle $C_{k+1}$ such that
the resulting bipartite digraph is $B^4_{n-1,p,q}$, then
$\lambda_\alpha(H)\geq\lambda_\alpha(B^4_{n-1,p,q})$ by using Lemma \ref{le:11} repeatedly $n-p-q-k-1$ times,
and thus $\lambda_\alpha(G)>\lambda_\alpha(H)\geq\lambda_\alpha(B^4_{n-1,p,q})
>\lambda_\alpha(B^2_{n-1,p,q})\geq\lambda_\alpha(B^1_{n-1,p,q})\geq\lambda_\alpha(B^5_{n,p,q})$
by Theorems \ref{th:ch-7}, \ref{th:ch-1} and \ref{th:ch-8}.

{\bf Case (v).} $H\cong B^5_{p+q+l,p,q}$, \ \ \ \ ($l\equiv 0$ \ (mod \ 2), $l\geq2$)

Insert $n-p-q-l$ vertices into the directed path $P_{l+2}$ such that
the resulting bipartite digraph is $B^5_{n,p,q}$, then
$\lambda_\alpha(H)\geq\lambda_\alpha(B^5_{n,p,q})$ by using Lemma \ref{le:11} repeatedly $n-p-q-l$ times.
Hence, by Theorem \ref{th:ch-2}, we have

$(1)$ For $\alpha=0$, $\lambda_\alpha(G)\geq\lambda_\alpha(H)\geq \lambda_\alpha(B^5_{n,p,q})=\lambda_\alpha(B^6_{n,p,q})$.

$(2)$ For $0<\alpha<1$, $\lambda_\alpha(G)\geq\lambda_\alpha(H)\geq \lambda_\alpha(B^5_{n,p,q})$.

{\bf Case (vi).} $H\cong B^6_{p+q+l,p,q}$, \ \ \ \ ($l\equiv 0$ \ (mod \ 2), $l\geq2$)

Insert $n-p-q-l$ vertices into the directed path $P_{l+2}$ such that
the resulting bipartite digraph is $B^6_{n,p,q}$, then
$\lambda_\alpha(H)\geq\lambda_\alpha(B^6_{n,p,q})$ by using Lemma \ref{le:11} repeatedly $n-p-q-l$ times.
Hence, by Theorem \ref{th:ch-2}, we have

$(1)$ For $\alpha=0$, $\lambda_\alpha(G)\geq\lambda_\alpha(H)\geq \lambda_\alpha(B^5_{n,p,q})=\lambda_\alpha(B^6_{n,p,q})$.

$(2)$ For $0<\alpha<1$, $\lambda_\alpha(G)\geq\lambda_\alpha(H)\geq \lambda_\alpha(B^6_{n,p,q})\geq\lambda_\alpha(B^5_{n,p,q})$.

Combining the above six cases, we have

$(1)$ For $\alpha=0$, $\lambda_\alpha(G)\geq \lambda_\alpha(B^5_{n,p,q})=\lambda_\alpha(B^6_{n,p,q})$ and the equality holds
if and only if $G\cong B^5_{n,p,q}$ or $G\cong B^6_{n,p,q}$.

$(2)$ For $0<\alpha<1$, $\lambda_\alpha(G)\geq \lambda_\alpha(B^5_{n,p,q})$ and the equality holds
if and only if $G\cong B^5_{n,p,q}$.
\end{proof}

\noindent\begin{theorem}\label{th:ch-10} Let $p\geq q\geq2$, $p+q\leq n-1$, $n\not\equiv p+q \ (mod \ 2)$ and
$G\in\mathcal{G}_{n,p,q}$. Then $\lambda_\alpha(G)\geq \lambda_\alpha(B^1_{n,p,q})$ and the equality holds
if and only if $G\cong B^1_{n,p,q}$.
\end{theorem}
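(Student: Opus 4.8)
The plan is to mirror the argument for Theorem \ref{th:ch-9}, but with the parity of $n-p-q$ reversed: here $n-p-q$ is odd, so the bipartite competitor whose handle joins two vertices of $V_p$, namely $B^1_{n,p,q}$, is the relevant extremal digraph, now playing the role that $B^5_{n,p,q}$ played before. First I would reduce a general $G\in\mathcal{G}_{n,p,q}$ to a canonical digraph. Since $\overleftrightarrow{K_{p,q}}$ is a proper subdigraph of the strongly connected $G$, I delete vertices and arcs to extract a subdigraph $H$ carrying exactly one handle attached to $\overleftrightarrow{K_{p,q}}$; exactly as in Theorem \ref{th:ch-9}, bipartiteness forces $H$ to be one of $B^1_{p+q+k,p,q}$, $B^2_{p+q+k,p,q}$, $B^3_{p+q+k,p,q}$, $B^4_{p+q+k,p,q}$ with $k$ odd, or $B^5_{p+q+l,p,q}$, $B^6_{p+q+l,p,q}$ with $l$ even. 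By Lemma \ref{le:9}, $\lambda_\alpha(G)\geq\lambda_\alpha(H)$, with equality if and only if $G\cong H$.

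The core is a case-by-case chain terminating at $\lambda_\alpha(B^1_{n,p,q})$. For the four cases whose handle joins two vertices of $V_p$, I would grow $H$ back to its $n$-vertex form by reinserting the missing $n-p-q-k$ vertices into the handle path or cycle; this count is even because $n-p-q$ and $k$ are both odd, so Lemma \ref{le:11} applies and only lowers the spectral radius. I then invoke the established comparisons: Theorem \ref{th:ch-1} gives $\lambda_\alpha(B^2_{n,p,q})\geq\lambda_\alpha(B^1_{n,p,q})$, Theorem \ref{th:ch-3} gives $\lambda_\alpha(B^3_{n,p,q})>\lambda_\alpha(B^1_{n,p,q})$, and Theorem \ref{th:ch-7} combined with Theorem \ref{th:ch-1} gives $\lambda_\alpha(B^4_{n,p,q})>\lambda_\alpha(B^2_{n,p,q})\geq\lambda_\alpha(B^1_{n,p,q})$. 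For the two cases $B^5,B^6$, whose handle joins a vertex of $V_p$ to one of $V_q$, the $n$-vertex form is not bipartite, so I instead grow $H$ to $B^5_{n-1,p,q}$ or $B^6_{n-1,p,q}$ — legitimate since $l$ and $(n-1)-p-q$ are both even — and finish with Theorem \ref{th:ch-5}, namely $\lambda_\alpha(B^5_{n-1,p,q})>\lambda_\alpha(B^1_{n,p,q})$, preceded in the $B^6$ subcase by Theorem \ref{th:ch-2}. This last step, Theorem \ref{th:ch-5}, bridges the two families across a one-vertex gap and is the crux of the argument.

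Finally I would settle the equality case. In the $B^3,B^4,B^5,B^6$ branches the chain contains a strict inequality, so $\lambda_\alpha(G)>\lambda_\alpha(B^1_{n,p,q})$ there; in the $B^2$ branch the comparison is strict unless $p=q$, where $B^2_{n,p,q}\cong B^1_{n,p,q}$, so no new equality arises. The only way to reach equality is the $B^1$ branch with $k=n-p-q$, meaning no vertex was deleted during the reduction and none reinserted, hence $H=B^1_{n,p,q}$; together with the equality clause of Lemma \ref{le:9} this forces $G\cong B^1_{n,p,q}$. I expect the main difficulty to be bookkeeping rather than analysis: confirming that every reinsertion count is even, so that both Lemma \ref{le:11} and bipartiteness cooperate, and that in each branch the inequalities are oriented toward $B^1_{n,p,q}$. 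All analytic content is outsourced to Theorems \ref{th:ch-1}, \ref{th:ch-2}, \ref{th:ch-3}, \ref{th:ch-5} and \ref{th:ch-7}.
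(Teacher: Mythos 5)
Your proposal follows the paper's own proof essentially verbatim: the same reduction to one of the six subdigraphs $B^1,\dots,B^6$ via Lemma \ref{le:9}, the same reinsertion of vertices via Lemma \ref{le:11} (to the $n$-vertex forms for $B^1$--$B^4$ and to the $(n-1)$-vertex forms for $B^5$, $B^6$), and the same terminal comparisons through Theorems \ref{th:ch-1}, \ref{th:ch-2}, \ref{th:ch-3}, \ref{th:ch-5} and \ref{th:ch-7}. The argument is correct, and your treatment of the equality case (noting that the $B^2$ branch collapses onto $B^1$ only when $p=q$) is in fact slightly more explicit than the paper's.
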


\begin{proof} Since $G\in\mathcal{G}_{n,p,q}$, $\overleftrightarrow{K_{p,q}}$ is a proper subdigraph of $G$.
Since $G$ is strongly connected, it is possible to obtain a digraph $H$ from $G$ by deleting vertices and arcs in
a way such that one has a subdigraph $\overleftrightarrow{K_{p,q}}$. Therefore

(1) $H\cong B^1_{p+q+k,p,q}$, \ \ \ \ ($k\equiv 1$ \ (mod \ 2), $k\geq1$) or

(2) $H\cong B^2_{p+q+k,p,q}$, \ \ \ \ ($k\equiv 1$ \ (mod \ 2), $k\geq1$) or

(3) $H\cong B^3_{p+q+k,p,q}$, \ \ \ \ ($k\equiv 1$ \ (mod \ 2), $k\geq1$) or

(4) $H\cong B^4_{p+q+k,p,q}$, \ \ \ \ ($k\equiv 1$ \ (mod \ 2), $k\geq1$) or

(5) $H\cong B^5_{p+q+l,p,q}$, \ \ \ \ ($l\equiv 0$ \ (mod \ 2), $l\geq2$) or

(6) $H\cong B^6_{p+q+l,p,q}$, \ \ \ \ ($l\equiv 0$ \ (mod \ 2), $l\geq2$).

By Lemma \ref{le:9}, $\lambda_\alpha(G)\geq\lambda_\alpha(H)$, the equality holds if and only if $H\cong G$.

{\bf Case (i).} $H\cong B^1_{p+q+k,p,q}$, \ \ \ \ ($k\equiv 1$ \ (mod \ 2), $k\geq1$)

Insert $n-p-q-k$ vertices into the directed path $P_{k+2}$ such that
the resulting bipartite digraph is $B^1_{n,p,q}$, then
$\lambda_\alpha(H)\geq\lambda_\alpha(B^1_{n,p,q})$ by using Lemma \ref{le:11} repeatedly $n-p-q-k$ times,
and thus $\lambda_\alpha(G)\geq\lambda_\alpha(H)\geq\lambda_\alpha(B^1_{n,p,q})$

{\bf Case (ii).} $H\cong B^2_{p+q+k,p,q}$, \ \ \ \ ($k\equiv 1$ \ (mod \ 2), $k\geq1$)

Insert $n-p-q-k$ vertices into the directed path $P_{k+2}$ such that
the resulting bipartite digraph is $B^2_{n,p,q}$, then
$\lambda_\alpha(H)\geq\lambda_\alpha(B^2_{n,p,q})$ by using Lemma \ref{le:11} repeatedly $n-p-q-k$ times,
and thus $\lambda_\alpha(G)\geq\lambda_\alpha(H)\geq\lambda_\alpha(B^2_{n,p,q})\geq\lambda_\alpha(B^1_{n,p,q})$
by Theorem \ref{th:ch-1}.

{\bf Case (iii).} $H\cong B^3_{p+q+k,p,q}$, \ \ \ \ ($k\equiv 1$ \ (mod \ 2), $k\geq1$)

Insert $n-p-q-k$ vertices into the directed cycle $C_{k+1}$ such that
the resulting bipartite digraph is $B^3_{n,p,q}$, then
$\lambda_\alpha(H)\geq\lambda_\alpha(B^3_{n,p,q})$ by using Lemma \ref{le:11} repeatedly $n-p-q-k$ times,
and thus $\lambda_\alpha(G)\geq\lambda_\alpha(H)\geq\lambda_\alpha(B^3_{n,p,q})>\lambda_\alpha(B^1_{n,p,q})$
by Theorem \ref{th:ch-3}.

{\bf Case (iv).} $H\cong B^4_{p+q+k,p,q}$, \ \ \ \ ($k\equiv 1$ \ (mod \ 2), $k\geq1$)

Insert $n-p-q-k$ vertices into the directed cycle $C_{k+1}$ such that
the resulting bipartite digraph is $B^4_{n,p,q}$, then
$\lambda_\alpha(H)\geq\lambda_\alpha(B^4_{n,p,q})$ by using Lemma \ref{le:11} repeatedly $n-p-q-k$ times,
and thus $\lambda_\alpha(G)\geq\lambda_\alpha(H)\geq\lambda_\alpha(B^4_{n,p,q})
>\lambda_\alpha(B^2_{n,p,q})\geq\lambda_\alpha(B^1_{n,p,q})$.
by Theorems \ref{th:ch-7} and \ref{th:ch-1}.

{\bf Case (v).} $H\cong B^5_{p+q+l,p,q}$, \ \ \ \ ($l\equiv 0$ \ (mod \ 2), $l\geq2$)

Insert $n-p-q-l-1$ vertices into the directed path $P_{l+2}$ such that
the resulting bipartite digraph is $B^5_{n-1,p,q}$, then
$\lambda_\alpha(H)\geq\lambda_\alpha(B^5_{n-1,p,q})$ by using Lemma \ref{le:11} repeatedly $n-p-q-l-1$ times.
Hence, by Theorem \ref{th:ch-5}, we have
 $\lambda_\alpha(G)>\lambda_\alpha(H)\geq \lambda_\alpha(B^5_{n-1,p,q})>\lambda_\alpha(B^1_{n,p,q})$.

{\bf Case (vi).} $H\cong B^6_{p+q+l,p,q}$, \ \ \ \ ($l\equiv 0$ \ (mod \ 2), $l\geq2$)

Insert $n-p-q-l-1$ vertices into the directed path $P_{l+2}$ such that
the resulting bipartite digraph is $B^6_{n-1,p,q}$, then
$\lambda_\alpha(H)\geq\lambda_\alpha(B^6_{n-1,p,q})$ by using Lemma \ref{le:11} repeatedly $n-p-q-l-1$ times.
Hence, by Theorems \ref{th:ch-2} and \ref{th:ch-5}, we have
$\lambda_\alpha(G)>\lambda_\alpha(H)\geq \lambda_\alpha(B^6_{n-1,p,q})\geq\lambda_\alpha(B^5_{n-1,p,q})>\lambda_\alpha(B^1_{n,p,q})$.

Combining the above six cases, we have $\lambda_\alpha(G)\geq \lambda_\alpha(B^1_{n,p,q})$ and the equality holds
if and only if $G\cong B^1_{n,p,q}$.
\end{proof}


\begin{thebibliography}{99}
\bibitem{BP} A. Berman, R. J. Plemmons, Nonnegative Matrices in the Mathematical Sciences, New York: Academic Press, 1979.

\bibitem{CCL} S.T. Chen, S.L. Chen, W.Q. Liu, The minimum spectral radius of strongly connected
bipartite digraphs with complete bipartite subdigraph, Quantitative Logic and Soft Computing 2016,
Springer International Publishing 2017, 659-669.

\bibitem{GM} H.A. Ganie, M. Baghipur, on the generalized adjacency spectral radius of digraphs, Linear Multilinear Algebra,
https://doi.org/10.1080/03081087.2020.1844614.

\bibitem{GL} G.Q. Guo, J. Liu, Some results on the spectral radius of generalized $\infty$ and $\theta$-digraphs,
Linear Algebra Appl., 437 (2012), 2200-2208.

\bibitem{HJ} R.A. Horn, C.R. Johnson, Matrix Analysis, Cambridge University
Press, New York, 1985.

\bibitem{HoYo} W.X. Hong, L.H. You, Spectral radius and signless Laplacian spectral
radius of strongly connected digraphs, Linear Algebra Appl., 457 (2014), 93-113.

\bibitem{LZ} J. Li, B. Zhou, On the spectral radius of strongly connected digraphs, Bull Iranian Math. Soc., 41 (2015), 381-387.

\bibitem{LD} H.Q. Lin, S.W. Drury, The maximum perron roots
of digraphs with some given parameters, Discrete Math., 313 (2013), 2607-2613.

\bibitem{LWZ} X.H. Li, L.G. Wang  S.Y Zhang, The signless Laplacian spectral radius of some
strongly connected digraphs, Indian J. Pure Appl. Math., 49 (2018), 113-127.

\bibitem{LS1} H.Q. Lin, J.L. Shu, Spectral radius of digraphs with
given dichromatic number, Linear Algebra  Appl., 434 (2011), 2462-2467.

\bibitem{LS} H.Q. Lin, J.L. Shu, A note on the spectral characterization of strongly
connected bicyclic digraphs, Linear Algebra Appl., 436 (2012), 2524-2530.

\bibitem{LSWY} H.Q. Lin, J.L. Shu, Y.R. Wu, G.L. Yu,
Spectral radius of strongly connected digraphs, Discrete Math.,
312 (2012), 3663-3669.

\bibitem{LiLi} H.Q. Lin, X.G. Liu, J. Xue, Graphs determined by their $A_\alpha$-spectra, Discrete Math., 342, (2019), 441-450.

\bibitem{LWCL} J.P. Liu, X.Z. W, J.S. Chen, B.L. Liu, The $A_\alpha$ spectral radius characterization of some digraphs, Linear Algebra Appl.,
563 (2019), 63-74.

\bibitem{LL} X.G. Liu, S.Y. Liu, On the $A_\alpha$-characteristic polynomial of a graph, Linear Algebra Appl., 546 (2018), 274-288.

\bibitem{Niki} V. Nikiforov, Merging the $A$- and $Q$-spectral theories, Applicable Analysis and Discrete Math., 11 (2017), 81-107.

\bibitem{NR} V. Nikiforov, O. Rojo, On the $\alpha$-index of graphs with pendent paths, Linear
Algebra Appl., 550 (2018), 87-104.

\bibitem{NO} V. Nikiforov, O. Rojo, A note on the positive semidefiniteness of $A_\alpha(G)$, Linear Algebra Appl., 519 (2017), 156-163.

\bibitem{XiWa1} W.G. Xi, L.G. Wang, Sharp upper bounds on the signless laplacian
spectral radius of strongly connected digraphs, Discuss. Math. Graph Theory, 36 (2016), 977-988.

\bibitem{XiWa2} W.G. Xi, L.G. Wang, The signless Laplacian and distance signless Laplacian
spectral radius of digraphs with some given parameters, Discrete Appl. Math., 227 (2017), 136-141.

\bibitem{XiWa4} W.G. Xi, W. So, L.G. Wang, On the $A_\alpha$ spectral radius of digraphs with given parameters, Linear Multilinear Algebra, https://doi.org/10.1080/03081087.2020.1793879.

\bibitem{XLLS} J. Xue, H.Q. Lin, S.T. Liu, J.L. Shu, On the $A_\alpha$-spectral radius of a graph, Linear Algebra Appl., 550 (2018), 105-120.
\end{thebibliography}
\end{document}